\providecommand{\U}[1]{\protect\rule{.1in}{.1in}}
\newtheorem{theorem}{Theorem}
\newtheorem{corollary}[theorem]{Corollary}
\newtheorem{lemma}[theorem]{Lemma}
\newtheorem{proposition}[theorem]{Proposition}
\newenvironment{proof}[1][Proof]{\noindent\textbf{#1.} }{\ \rule{0.5em}{0.5em}}
\begin{document}

\title{The spherical Whittaker Inversion Theorem and the quantum non-periodic Toda Lattice}
\author{Nolan R. Wallach}
\maketitle

\begin{abstract}
In this paper the spherical case of the Whittaker Inversion Theorem is given a
relatively self-contained proof. This special case can be used as a help in
deciphering the handling of the continuous spectrum in the proof of the full
theorem. It also leads directly to the solution of the quantum non-periodic
Toda Lattice. This is also explained in detail in this paper.

\end{abstract}

\section{Introduction}

The main purpose of this paper is to give a complete, relatively self
contained, proof of the spherical Whittaker Inversion Theorem for real
reductive groups. This result is an important special case of the general
theorem but it is unencumbered by the complications caused by discrete
spectrum. Reading it can be used as a help in understanding the arguments used
in \cite{Whittaker-Plancherel} to handle the continuous spectrum. By
relatively self contained I mean that it will be based on two main results:
The first is the Harish-Chandra Plancherel Formula for $L^{2}(G/K)$ (actually
the inversion formula) as developed in the work of Helgason \cite{Helgason1}%
,\cite{Hellgason2}.The second is the holomorphic continuation of the Jacquet
Integral for minimal parabolic subgroups and its brilliant implication due to
Rapha\"{e}l Beauzart-Plessis \cite{raphael}. The full theorem was announced in
the 1980's (see \cite{RRGII}) but a correct proof has only recently appeared
in \cite{Whittaker-Plancherel}. The applications of these results to the
theory of automorphic forms inundate the literature. We will include in the
last section of this paper this paper an application to the quantum
non-periodic Toda lattice (see \cite{Goodman-Wallach} for more background on
the subject). If you are a physicist or a mathematician who is not an expert
in Representation Theory, I would recommend that you read Section
\ref{TodaChapter} of this paper first.

\section{Some notation}

Let $G$ be a real reductive group. In this paper there is no loss of
generality to take $G$ to be the identity component of a subgroup of
$GL(n,\mathbb{R})$ that is the locus of zeros of a set of polynomials on
$M_{n}(\mathbb{R})$ such that $G$ invariant under transpose ($g\mapsto g^{T}%
$). We set $K=G\cap O(n)$ a maximal compact subgroup of $G$. We choose an
Iwasawa decomposition of $G$ given by $G=NAK$ with $N$ a maximal unipotent
subgroup (i.e. the elements of $N$ are of the form $I+X$ with $X$ nilpotent)
of $G$, $A$ a subgroup maximal among the subgroups of $G$ contained in the set
of symmetric positive definite matrices and $aNa^{-1}\subset N,a\in A$. Let
$\mathfrak{a}=Lie(A),\mathfrak{n}=Lie(N)$ and $\mathfrak{g}=Lie(G)$. Then
$ad(h)_{|\mathfrak{n}}$ for $h\in\mathfrak{a}$ simultaneously diagonalize
yielding the roots $\Phi^{+}$ of $\mathfrak{a}$ on $\mathfrak{n}$. The
exponential map $\exp:\mathfrak{a}\rightarrow A$ is a Lie group isomorphism
($\mathfrak{a}$ is a group under addition) set $\log:A\rightarrow\mathfrak{a}$
equal to the inverse map. We define for $\lambda\in\mathfrak{a}_{\mathbb{C}%
}^{\ast},a\in A$. $a^{\lambda}=e^{\lambda(\log(a))}.$ Define%
\[
\rho(h)=\frac{1}{2}\mathrm{tr}(ad(h)_{|\mathfrak{n}}).
\]
If $x,y\in\mathfrak{g}$ then set $\left\langle x,y\right\rangle =\mathrm{tr}%
(xy^{T})$ and $\left\Vert x\right\Vert =\left\langle x,x\right\rangle
^{\frac{1}{2}}$. Let $m=\dim\mathfrak{n}.$ On $\wedge^{m}\mathfrak{g}$ we put
the inner product induced by $\left\langle ...,...\right\rangle $. Let
$Int(g)$ be the transformation of $\,\mathfrak{g}$ given by $Int(g)x=gxg^{-1}%
$. Define for $g\in$ $G$, $\left\vert g\right\vert $ to be the operator norm
of $\wedge^{m}Int(g)$. Set $A_{G}$ equal to the subgroup of elements of the
center of $G$ that are positive definite. Then $KA_{G}[G,G]=G$. Define
\[
\left\Vert kag\right\Vert =e^{\left\Vert \log a\right\Vert }\left\vert
g\right\vert ^{\frac{1}{2}},k\in K.a\in A_{G},g\in\lbrack G,G].
\]
If $g\in G$ then $g$ can be written in the form $g=k_{1}ak_{2}$ with $a\in A$
such such that $a^{\alpha}\geq1$ for $\alpha\in\Phi^{+}$ and $k_{1},k_{2}\in
K$. One can check that if $g$ is of this form and if $g\in\lbrack G,G]$ then%
\[
\left\vert g\right\vert =a^{2\rho}\text{. }%
\]
It is easily seen that%
\[
\left\Vert x\right\Vert \geq1,\left\Vert xy\right\Vert \leq\left\Vert
x\right\Vert \left\Vert y\right\Vert
\]
and that the sets%
\[
\left\Vert x\right\Vert \leq r
\]
are compact for $r<\infty$. It is a bit harder to prove that there exists $d$
such that
\[
\int_{G}\left\vert x\right\vert ^{-1}(1+\log\left\Vert x\right\Vert
)^{-d}dx<\infty.
\]

Let $M=\{k\in K|ka=ak,a\in A\}$.

We now recall the Harish-Chandra Schwartz space. If $f\in C^{\infty}(G)$,
$x,y\in U(\mathfrak{g})$ $d\in\mathbb{R}$ then set
\[
p_{x,y,d}(f)=\sup\{\left\vert R_{x}L_{y}f(g)\right\vert \left\vert
g\right\vert ^{\frac{1}{2}}(1+\log\left\Vert g\right\Vert )^{d}|g\in G\}.
\]
Here if $X\in U(\mathfrak{g})$ then $R_{X}f(g)=\frac{d}{dt}f(g\exp tX)_{|t=0}$
and $L_{Y}f(g)=\frac{d}{dt}f(\exp(-tX)g)_{|t=0}$ and since $[L_{X}%
,L_{Y}]=L_{[X,Y]}$ and $[R_{X},R_{Y}]=R_{[X,Y]}$ the universal mapping
property of the universal enveloping algebra allows us to define $L_{x}$ and
$L_{y}$ for all $x.y\in U(\mathfrak{g}).$

Define $\mathcal{C}(G)=\{f\in C^{\infty}(G)|p_{x,y,d}(f)<\infty,x\in
U(\mathfrak{g}),d\in\mathbb{R\}}$ endowed with the topology defined by the
semi-norms $p_{x,y,d}$.

\section{The Plancherel Theorem for $\mathcal{C}(G/K)$}

Note that the map $\theta:G\rightarrow G$ given by $\theta(g)=(g^{T})^{-1}$ is
an automorphism of $G$. Set $\bar{N}=\theta(N)$. We have the corresponding
Iwasawa decomposition $G=\bar{N}AK$. Define for $g=\bar{n}ak$, $\bar{n}\in
\bar{N},a\in A,k\in K,$ $a(g)=a,k(g)=k$. Since the Iwasawa decomposition is
unique, $a:G\rightarrow A$ and $k:G\rightarrow K$ are $C^{\infty}$. If $f\in
C^{\infty}(M\backslash K)$ define for $\nu\in\mathfrak{a}_{\mathbb{C}}^{\ast
}$
\[
f_{\nu}(\bar{n}ak)=a^{\nu-\rho}f(k).
\]
This defines $f_{\nu}$ as a $C^{\infty}$ function on $G$. Define an action of
$G$ on $C^{\infty}(M\backslash K)$ by%
\[
\left(  \pi_{\nu}(g)f\right)  (k)=f_{\nu}(kg)\text{.}%
\]
If we put the $C^{\infty}$ topology on $C^{\infty}(M\backslash K)$ then
$(\pi_{\nu},C^{\infty}(M\backslash K))$ is a smooth Fr\'{e}chet representation
of $G$. We also put the $L^{2}$--inner product on $C^{\infty}(M\backslash K)$
\[
\left\langle u,w\right\rangle =\int_{K}u(k)\overline{w(k)}dk
\]
and find that
\[
\left\langle \pi_{\nu}(g)u,w\right\rangle =\left\langle w,\pi_{-\bar{\nu}%
}(g^{-1})u\right\rangle
\]
where $\bar{\nu}(h)=\overline{\nu(h)}$. In particular, if $\nu\in
\mathfrak{a}^{\ast}$ then $(\pi_{i\nu},L^{2}(M\backslash K))$ is a unitary
representation. Let $f\in C_{c}^{\infty}(G/K)$ then set%
\[
\left(  \pi_{\nu}(f)u\right)  (k)=\int_{G}u_{\nu}(kg)f(g)dg.
\]

If $\lambda\in\mathfrak{a}^{\ast}$ define $H_{\lambda}\in\mathfrak{a}^{\ast}$
by $\left\langle H_{\lambda},h\right\rangle =\lambda(h)$ for $h\in
\mathfrak{a}$. Define $(\lambda,\mu)=\left\langle H_{\lambda},H_{\mu
}\right\rangle $ and extend $(...,...)$ to $\mathfrak{a}_{\mathbb{C}}^{\ast}$
bilinearly. Then set%
\[
c(\nu)=\int_{N}a(n)^{\nu-\rho}dn
\]
where we leave, for the moment, the bi-invariant measure on $N$ unnormalized.
This integral converges absolutely for $\nu\in\mathfrak{a}_{\mathbb{C}}^{\ast
}$ such that%
\[
\operatorname{Re}(\nu,\alpha)<0
\]
for all $\alpha\in\Phi^{+}$ and uniformly in compacta in this set. Thus
$c(\nu)$ defines a holomorphic function on this subset. It has a meromorphic
continuation to all of $\mathfrak{a}_{\mathbb{C}}^{\ast}$ indeed
Gindikin-Karpelovic derived an explicit formula (c.f. \cite{Hellgason2} or
\cite{HArmHom}) 8.10.18) if we set for $\alpha\in\Phi_{0}^{+}=\{\beta\in
\Phi^{+}|\frac{\beta}{2}\notin\Phi^{+}\}$,
\[
c_{\alpha}(\nu)=\left\{
\begin{array}
[c]{c}%
B(\frac{\dim\mathfrak{n}_{\alpha}}{2},\frac{(\nu,\alpha)}{(\alpha,\alpha
)})\text{ if }2\alpha\notin\Phi^{+}\\
B(\frac{\dim\mathfrak{n}_{\alpha}}{2},\frac{(\nu,\alpha)}{(\alpha,\alpha
)})B(\frac{\dim\mathfrak{n}_{2\alpha}}{2},\frac{(\nu,\alpha)}{2(\alpha
,\alpha)}+\frac{\dim\mathfrak{n}_{\alpha}+\dim\mathfrak{n}_{2\alpha}}%
{2})\text{ if }2\alpha\in\Phi^{+}%
\end{array}
\right.  .
\]
Then the the measure on $N$ can be normalized so that
\[
c(\nu)=\prod_{\alpha\in\Phi_{0}^{+}}c_{\alpha}(\nu).
\]
Set for $\nu\in\mathfrak{a}^{\ast}$
\[
\mu(\nu)=\frac{1}{c(i\nu)c(-i\nu)}=\frac{1}{\left\vert c(i\nu)\right\vert
^{2}}%
\]
one can show using the formula for $c(\nu)$ and basic properties of the
$\Gamma$--function that that $\mu(\nu)\leq C(1+\left\Vert \nu\right\Vert
^{r})$ for some $r$.

We are now ready develop the Plancherel theorem for $G/K.$ We first recall a
special case of the Harish-Chandra Plancherel Theorem:

\begin{theorem}
The measure on $G$ and $\mathfrak{a}^{\ast}$ can be normalized so that if
$\phi\in\mathcal{C}(K\backslash G/K)$ (that is $\phi(k_{1}gk_{2}%
)=f(g),k_{1},k_{2}\in K$) then
\[
\phi(g)=\int_{\mathfrak{a}^{\ast}}\left\langle \pi_{i\nu}(L_{g^{-1}}%
\phi)1,1\right\rangle \mu(\nu)d\nu.
\]

\end{theorem}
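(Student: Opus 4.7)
The plan is to recognize the identity as a representation-theoretic recasting of Helgason's spherical inversion formula; the main work is to verify the reformulation and then quote the Harish-Chandra Schwartz-space version of that formula.

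First I would reduce the matrix coefficient. A change of variable $h \mapsto g^{-1}h$ in the integral defining $\pi_{i\nu}(L_{g^{-1}}\phi)$ gives
\[
\pi_{i\nu}(L_{g^{-1}}\phi) \;=\; \pi_{i\nu}(g^{-1})\,\pi_{i\nu}(\phi),
\]
hence $\langle \pi_{i\nu}(L_{g^{-1}}\phi)1,\,1\rangle = \langle \pi_{i\nu}(\phi)1,\,\pi_{-i\nu}(g)1\rangle$. For $\phi$ left $K$-invariant, substituting $g \mapsto k^{-1}g$ in the integral for $(\pi_{i\nu}(\phi)1)(k)$ shows the value is independent of $k$; for bi-invariant $\phi$, averaging over $K$ on the left identifies this common value as $\hat\phi(\nu) := \int_G \phi(g)\,\varphi_{i\nu}(g)\,dg$, where
\[
\varphi_\nu(g) \;=\; \langle \pi_\nu(g)1,\,1\rangle \;=\; \int_K a(kg)^{\nu-\rho}\,dk
\]
is the elementary spherical function. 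Unitarity of $\pi_{i\nu}$ for real $\nu$ gives $\varphi_{i\nu}(g^{-1}) = \overline{\varphi_{i\nu}(g)} = \varphi_{-i\nu}(g)$. Thus the matrix coefficient equals $\hat\phi(\nu)\,\varphi_{-i\nu}(g)$, and the stated identity becomes
\[
\phi(g) \;=\; \int_{\mathfrak{a}^*} \hat\phi(\nu)\,\varphi_{-i\nu}(g)\,\mu(\nu)\,d\nu,
\]
which, after the Weyl-invariant change of variable $\nu \mapsto -\nu$, is precisely Helgason's spherical inversion formula.

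Next I would invoke Helgason's proof of the spherical inversion on $C_c^{\infty}(K\backslash G/K)$, which proceeds via the Gindikin--Karpelovich factorization of the $c$-function (already recorded in the excerpt) and reduces the problem to ordinary Fourier inversion on $\mathfrak{a}^*$. To pass from compactly supported test functions to all of $\mathcal{C}(K\backslash G/K)$ I would appeal to the Harish-Chandra--Trombi--Varadarajan theorem that the spherical transform is a topological isomorphism
\[
\mathcal{C}(K\backslash G/K) \;\longrightarrow\; \mathcal{S}(\mathfrak{a}^*)^W.
\]
Together with the polynomial bound $\mu(\nu) \leq C(1+\|\nu\|^r)$ from the excerpt and the standard uniform estimates on $\varphi_{i\nu}$ in $g$ and $\nu$, this makes the spectral integral converge absolutely and depend continuously on $\phi$, so the formula extends from the dense subspace $C_c^{\infty}(K\backslash G/K)$ by continuity.

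The hard part is precisely that last extension. The formal manipulations above are essentially routine, but the Harish-Chandra--Trombi--Varadarajan Schwartz-space isomorphism---both directions---rests on delicate asymptotic expansions of $\varphi_{i\nu}$ uniform in the spectral parameter $\nu$, together with tight control of the $c$-function singularities. Any genuinely self-contained treatment would have to spend most of its effort there rather than on the reformulation of the matrix coefficient.
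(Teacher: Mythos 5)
Your recasting of the matrix coefficient, $\langle\pi_{i\nu}(L_{g^{-1}}\phi)1,1\rangle=\hat\phi(\nu)\,\varphi_{-i\nu}(g)$ with $\hat\phi(\nu)=\int_G\phi(g)\varphi_{i\nu}(g)\,dg$, is correct, and your route --- Helgason's spherical inversion on $C_c^{\infty}(K\backslash G/K)$ extended to $\mathcal{C}(K\backslash G/K)$ via the Harish-Chandra--Trombi--Varadarajan (equivalently Anker's) Schwartz-space isomorphism --- is exactly how the paper handles this statement: it gives no proof, quoting it as a known special case of the Harish-Chandra Plancherel/inversion theorem and citing Anker's elementary proof. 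So your proposal is correct and in substance identical to the paper's treatment.
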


There is a relatively elementary proof of this Theorem due to Anker
\cite{Anker} that specifically proves this result. Harish-Chandra proved much
more. We recall an argument in Helgason \cite{Hellgason2} section III.1, in
the proof of the following implication.

\begin{theorem}
Let $f\in\mathcal{C}(G/K)$ then with the same normalizations as in the
previous result we have%
\[
f(g)=\int_{\mathfrak{a}^{\ast}}\left\langle \pi_{i\nu}(L_{g^{-1}%
}f)1,1\right\rangle \mu(\nu)d\nu.
\]

\end{theorem}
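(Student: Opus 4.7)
The plan is to reduce the statement to the bi-$K$-invariant case (the previous theorem) by the standard $K$-averaging trick. Fix $g_0 \in G$ and define
\[
\phi(g) = \int_K f(g_0 k g)\, dk.
\]
My first step is to verify that $\phi \in \mathcal{C}(K\backslash G/K)$: right $K$-invariance is inherited from $f \in \mathcal{C}(G/K)$, left $K$-invariance is built in by construction, and the Schwartz estimates survive because left translation by $g_0$ and averaging over the compact group $K$ preserve all of the seminorms $p_{x,y,d}$ (up to constants depending on $g_0$). This step is mostly bookkeeping given the submultiplicativity of $\|\cdot\|$ and the uniform estimates for $|g_0 k g|$ and $\|g_0 k g\|$ that follow from compactness of $K$.

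Having placed $\phi$ in $\mathcal{C}(K\backslash G/K)$, I would apply the previous theorem at the identity element:
\[
f(g_0) = \phi(e) = \int_{\mathfrak{a}^*} \langle \pi_{i\nu}(\phi) 1, 1 \rangle\, \mu(\nu)\, d\nu,
\]
using that $\phi(e) = \int_K f(g_0 k)\, dk = f(g_0)$ by the right $K$-invariance of $f$.

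The remaining step is to identify $\langle \pi_{i\nu}(\phi) 1, 1 \rangle$ with $\langle \pi_{i\nu}(L_{g_0^{-1}} f) 1, 1 \rangle$. Expanding and applying Fubini,
\[
\langle \pi_{i\nu}(\phi) 1, 1 \rangle = \int_K \int_G f(g_0 k h)\, \langle \pi_{i\nu}(h) 1, 1 \rangle\, dh\, dk,
\]
I would substitute $h \mapsto k^{-1} h$ in the inner integral and invoke the fact that $h \mapsto \langle \pi_{i\nu}(h) 1, 1 \rangle = \int_K 1_{i\nu}(k' h)\, dk'$ is a bi-$K$-invariant Harish-Chandra spherical function. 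The outer $K$-integration then becomes trivial, leaving
\[
\int_G f(g_0 h)\, \langle \pi_{i\nu}(h) 1, 1 \rangle\, dh = \langle \pi_{i\nu}(L_{g_0^{-1}} f) 1, 1 \rangle,
\]
which is precisely what is needed.

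The main obstacle is the first step: one must know that the $K$-average $\phi$ really does lie in $\mathcal{C}(K\backslash G/K)$, with enough control of the Schwartz seminorms (uniformly in $k \in K$) to justify both the application of the previous theorem to $\phi$ and the Fubini argument at the end. Once that uniform estimate is in hand, the rest of the proof is formal.
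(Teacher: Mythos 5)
Your proposal is correct and takes essentially the same approach as the paper: the paper defines $\phi(g,x)=\int_K f(gkx)\,dk$, checks that $x\mapsto\phi(g,x)$ lies in $\mathcal{C}(K\backslash G/K)$, applies the bi-$K$-invariant theorem at the identity, and identifies $\left\langle \pi_{i\nu}(\phi(g,\cdot))1,1\right\rangle$ with $\left\langle \pi_{i\nu}(L_{g^{-1}}f)1,1\right\rangle$ using the left $K$-invariance of the spherical function, exactly as you do. The only cosmetic difference is that the paper justifies the Schwartz-space membership of the $K$-average via a Casimir/Sobolev argument, whereas you propose direct seminorm bookkeeping over the compact group $K$, which works equally well.
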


\textbf{Note:}The point here is that $f$ is not necessarily left $K$--finite
(since under that condition the result will be a special case of
Harish-Chandra's Theorem).

\begin{proof}
Define for $g,x\in G$
\[
\phi(g,x)=\int_{K}f(gkx)dk.
\]
Noting that the Casimir operator, $C$, corresponding to the choice of $B$ on
$\mathfrak{g}$ yields the Laplacian of the Riemannian structure on $G$ given
by the inner product on $\mathfrak{g}/Lie(K)$ induced by $B$. This and Sobolev
theory implies that if $g$ is fixed then $x\mapsto\phi(g,x)$ is in
$\mathcal{C}(K\backslash G/K)$. Thus keeping $g$ fixed we have%
\[
\phi(g,I)=\int_{\mathfrak{a}^{\ast}}\left\langle \pi_{i\nu}(\phi
(g,\cdot))1,1\right\rangle \mu(\nu)d\nu.
\]
Now
\[
\left\langle \pi_{i\nu}(\phi(g,\cdot))1,1\right\rangle =\int_{G}%
\phi(g,x)\left\langle \pi_{i\nu}(x)1,1\right\rangle dx=\int_{G}\int
_{K}f(gkx)\left\langle \pi_{i\nu}(x)1,1\right\rangle dkdx
\]%
\[
=\int_{G}\int_{K}f(gx)\left\langle \pi_{i\nu}(k^{-1}x)1,1\right\rangle
dkdx=\int_{G}f(gx)\left\langle \pi_{i\nu}(x)1,1\right\rangle dx=\left\langle
\pi_{i\nu}(L_{g^{-1}}f)1,1\right\rangle .
\]
Noting that $\phi(g,I)=f(g)$ completes the proof.
\end{proof}

Harish-Chandra's theorem says much more in the $K$-finite case.

\begin{theorem}
\label{HCVersion}Let $u\in C^{\infty}(M\backslash K)$ be right $K$--finite and
let $\alpha\in\mathcal{S}(\mathfrak{a}^{\ast})$ then the function $f$ defined
by%
\[
f(g)=\int_{\mathfrak{a}^{\ast}}\left\langle \pi_{i\nu}(g)1,u\right\rangle
\alpha(\nu)\mu(\nu)d\nu
\]
is in $\mathcal{C}(K\backslash G).$
\end{theorem}

\section{The holomorphic continuation of Jacquet integrals and a Theorem of
Beuzart-Plessis}

Retain the notation of the previous sections. If $\chi:N\rightarrow S^{1}$ is
a unitary (one dimensional) character of $N$ we consider for $u\in C^{\infty
}(M\backslash K)$
\[
J_{\chi,\nu}(u)=\int_{N}\chi(n)^{-1}u_{\nu}(n)dn.
\]
Note that if $\chi=1$ and $u=1$ then the integral is the one that defines the
Harish-Chandra c-function. This implies that the integral defining
$J_{\chi,\nu}$ converges absolutely if $\operatorname{Re}(\nu,\alpha)<0$ for
all $\alpha\in\Phi^{+}$. Let $\Delta$ be the set of elements of $\Phi^{+}$
that appear in $\mathfrak{n}/[\mathfrak{n},\mathfrak{n}].$ Thus $\mathfrak{n}%
/[\mathfrak{n},\mathfrak{n}]=\oplus_{\alpha\in\Delta}\left(  \mathfrak{n}%
/[\mathfrak{n},\mathfrak{n}]\right)  _{\alpha}$. We say that $\chi$ is generic
if its differential is non-zero on each of the spaces $\left(  \mathfrak{n}%
/[\mathfrak{n},\mathfrak{n}]\right)  _{\alpha}$ with $\alpha\in\Delta$. \ The
Jacquet integrals are the $J_{\chi,\nu}$ with $\chi$ generic. The holomorphic
continuation of Jacquet integrals in this generality (non-K-finite $u$) was
first proved in \cite{JacquetInt} (c.f. \cite{RRGII} Theorem 15.6.7).

\begin{theorem}
If $\chi$ is generic and if $u\in C^{\infty}(M\backslash K)$ then $J_{\chi
,\nu}(u)$ has a holomorphic continuation to $\mathfrak{a}_{\mathbb{C}}^{\ast}%
$, furthermore the map $\nu\mapsto J_{\chi,\nu}$ is a weakly holomorphic map
of $\mathfrak{a}_{\mathbb{C}}^{\ast}$ to $C^{\infty}(M\backslash K)^{\prime}$
(the continuous dual space). Finally, if $\nu\in\mathfrak{a}_{\mathbb{C}%
}^{\ast}$ then $J_{\chi,\nu}\neq0$.
\end{theorem}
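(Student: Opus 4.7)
The strategy is first to establish holomorphic continuation for $K$-finite $u$, then extend to all of $C^\infty(M\backslash K)$ via the Peter-Weyl decomposition combined with uniform estimates in the $K$-type.

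In the region of absolute convergence, invariance of Haar measure on $N$ yields the Whittaker identity
\[
J_{\chi,\nu}(\pi_\nu(X) u) = d\chi(X)\, J_{\chi,\nu}(u), \qquad X \in \mathfrak{n}.
\]
For $u$ lying in a $K$-isotypic subspace $V_\delta$, this identity combined with the $Z(\mathfrak{g})$-eigenvalue equations of $\pi_\nu$ places the restriction of $J_{\chi,\nu}$ into a holonomic system in $\nu$. Uniqueness of Whittaker functionals for generic $\chi$, together with Jacquet's analysis of the leading asymptotic terms, shows that the continuation is entire rather than merely meromorphic---this is the $K$-finite case proved in \cite{JacquetInt} and \cite{RRGII}, Theorem 15.6.7.

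The next step is quantitative: for each compact $\Omega \subset \mathfrak{a}_\mathbb{C}^*$ and each $K$-type $\delta$, one needs a bound
\[
|J_{\chi,\nu}(u)| \leq C_\Omega(\delta)\, \|u\|_{L^2(K)}, \qquad u \in V_\delta,\ \nu \in \Omega,
\]
with $C_\Omega(\delta)$ growing only polynomially in the highest weight of $\delta$. Iterating the Whittaker identity with root vectors $X_\alpha \in \mathfrak{n}_\alpha$ chosen so that $d\chi(X_\alpha) \neq 0$ (possible by genericity on each $(\mathfrak{n}/[\mathfrak{n},\mathfrak{n}])_\alpha$, $\alpha \in \Delta$) trades each factor of $d\chi(X_\alpha)$ on the left for $\pi_\nu(X_\alpha) u$ on the right; the latter is controlled on $V_\delta$ by standard Sobolev estimates whose constants grow polynomially with the highest weight.

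For arbitrary $u \in C^\infty(M\backslash K)$, Peter-Weyl gives $u = \sum_\delta u_\delta$ with $\|u_\delta\|_{L^2(K)}$ of faster-than-polynomial decay in the highest weight (the defining smoothness condition). Combined with the previous bound, the series $\sum_\delta J_{\chi,\nu}(u_\delta)$ converges absolutely and uniformly on compacta in $\mathfrak{a}_\mathbb{C}^*$, simultaneously delivering the holomorphic continuation of $\nu \mapsto J_{\chi,\nu}(u)$ and the continuity of $u \mapsto J_{\chi,\nu}(u)$; weak holomorphy into $C^\infty(M\backslash K)'$ then follows. The hard part is the polynomial-in-$\delta$ bound: one must arrange the integration-by-parts so that the number of steps required is bounded in terms of $\Omega$ alone, while the resulting dependence on $u$ remains polynomial in the $K$-type and is eventually dominated by the rapid decay of $\|u_\delta\|_{L^2(K)}$.
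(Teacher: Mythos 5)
Note first that the paper does not prove this theorem: it is quoted from \cite{JacquetInt} (cf.\ \cite{RRGII}, Theorem 15.6.7), where the continuation for arbitrary smooth $u$ is obtained by a shift argument --- tensoring with finite-dimensional representations --- which produces, directly at the level of continuous semi-norms on $C^{\infty}(M\backslash K)$, holomorphy together with locally uniform bounds (this is the same mechanism invoked in the proof sketch of Proposition \ref{tempered} in this paper). Your plan takes a different route: continue $J_{\chi,\nu}$ on each $K$-isotypic component, then sum the Peter--Weyl series using a bound $|J_{\chi,\nu}(u)|\leq C_{\Omega}(\delta)\Vert u\Vert_{L^{2}(K)}$ on $V_{\delta}$, uniform on compacta $\Omega\subset\mathfrak{a}_{\mathbb{C}}^{\ast}$ and polynomial in $\delta$. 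That scheme is structurally sound (holomorphy on the dense $K$-finite subspace plus locally uniform equicontinuity does yield weak holomorphy on all of $C^{\infty}(M\backslash K)$), but the entire content of the theorem is concentrated in exactly that estimate, and your proposal does not establish it.

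Concretely, the mechanism you offer for the key bound does not work as described. The identity $J_{\chi,\nu}(\pi_{\nu}(X)u)=d\chi(X)J_{\chi,\nu}(u)$, $X\in\mathfrak{n}$, is available a priori only in the region of absolute convergence $\operatorname{Re}(\nu,\alpha)<0$, so it cannot by itself control $J_{\chi,\nu}$ on a compact set in the continued domain; moreover it relates values of the same functional at different vectors, and $\pi_{\nu}(X_{\alpha})u$ does not stay in $V_{\delta}$ (it spreads over neighboring $K$-types), so the proposed iteration does not close into a bound for $J_{\chi,\nu}|_{V_{\delta}}$ in terms of finitely many low $K$-types with constants polynomial in the highest weight and uniform in $\nu$. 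In the actual proofs such uniformity is extracted from the shift/tensoring construction (and, for growth in $\operatorname{Im}\nu$, from the ``tame'' estimates of \cite{RRGII}, Theorem 15.2.5), not from the $\mathfrak{n}$-equivariance identity plus Sobolev estimates. Since you also cite the $K$-finite case rather than prove it, what remains unproved in your outline is precisely the hard part; as it stands the proposal is a plausible reduction, not a proof.
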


If $\chi$ is not generic then one can use this result combined with the
meromorphic continuation of conical vectors to prove a meromorphic
continuation result.

The Theorem of Beuzart-Plessis \cite{raphael} which is based on the holomorphy
of $\nu\mapsto J_{\chi,\nu}(1)$ is

\begin{theorem}
\label{Raphael}If $\chi$ is a generic character of $N$ then there exists
$\varepsilon>0$ such that
\[
\int_{\mathrm{\ker}(\chi)}a(n)^{-(1-\varepsilon)\rho}dn<\infty.
\]

\end{theorem}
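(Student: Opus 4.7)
My plan is to combine the holomorphic continuation of the Jacquet integral (Theorem 4) with a Fourier slice decomposition of $N$, and then apply Fourier inversion to reduce the absolute integrability on $\ker\chi$ to bounds on the Jacquet integral along a family of scaled characters.

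Since $N$ is simply connected nilpotent, $\chi$ has the form $\chi(\exp X) = e^{i\lambda(X)}$ for a unique $\lambda \in \mathfrak{n}^{\ast}$ killing $[\mathfrak{n},\mathfrak{n}]$; genericity means $\lambda$ is non-zero on every simple root space. Choose $Y \in \mathfrak{n}$ with $\lambda(Y) = 1$, set $\mathfrak{n}_{0} = \ker\lambda$ (a codimension-one ideal in $\mathfrak{n}$), and $N_{0} = \exp(\mathfrak{n}_{0})$, the identity component of $\ker\chi$. The map $(n_{0}, s) \mapsto n_{0} \exp(sY)$ is a diffeomorphism $N_{0} \times \mathbb{R} \to N$ with Haar measure decomposition $dn = dn_{0}\,ds$, satisfying $\chi(n_{0}\exp(sY)) = e^{is}$. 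For any $\xi \neq 0$, the scaled character $\chi^{\xi}(\exp X) = e^{i\xi\lambda(X)}$ is again generic. Define
\[
\Phi_{\nu}(s) = \int_{N_{0}} a(n_{0}\exp(sY))^{\nu-\rho}\,dn_{0},
\]
convergent for $\operatorname{Re}(\nu,\alpha) < 0$ on all $\alpha \in \Phi^{+}$. In that region, $J_{\chi^{\xi},\nu}(1) = \widehat{\Phi_{\nu}}(\xi)$ (Fourier transform in $s$), and by Theorem 4 each $\widehat{\Phi_{\nu}}(\xi)$ (for $\xi \neq 0$) extends holomorphically in $\nu$ to all of $\mathfrak{a}_{\mathbb{C}}^{\ast}$. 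The target integral is $\Phi_{\varepsilon\rho}(0)$, which by Fourier inversion at $s = 0$ equals $\tfrac{1}{2\pi}\int_{\mathbb{R}} \widehat{\Phi_{\varepsilon\rho}}(\xi)\,d\xi$ (provided this is absolutely convergent). The theorem thus reduces to showing $\widehat{\Phi_{\varepsilon\rho}} \in L^{1}(\mathbb{R})$ for some $\varepsilon > 0$.

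To bound $\widehat{\Phi_{\nu}}(\xi)$ for $|\xi|$ large, I would integrate by parts in $s$ inside the definition $\widehat{\Phi_{\nu}}(\xi) = \int_{\mathbb{R}} e^{-i\xi s}\Phi_{\nu}(s)\,ds$, valid for $\nu$ in the convergence region; after $M$ iterations this yields a factor $(i\xi)^{-M}$ times the Fourier transform of $\partial_{s}^{M} \Phi_{\nu}$, and the resulting polynomial bound in $|\xi|^{-1}$ transfers to $\nu = \varepsilon\rho$ by analytic continuation in $\nu$ (Theorem 4). The main obstacle is control of $\widehat{\Phi_{\nu}}(\xi)$ near $\xi = 0$: there $\chi^{0} = 1$ is not generic, Theorem 4 no longer applies, and $\widehat{\Phi_{\nu}}(0)$ is the meromorphically continued Harish-Chandra $c$-function $c(\nu)$. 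One selects $\varepsilon > 0$ so that $c(\nu)$ has no poles near $\varepsilon\rho$ (using the Gindikin-Karpelevich formula), and then exploits joint continuity of $J_{\chi^{\xi},\nu}(1)$ in $(\xi, \nu)$ for $\xi \neq 0$, combined with dominated convergence as $\xi \to 0$ inside the convergence region in $\nu$ and analytic continuation to $\nu = \varepsilon\rho$, to derive local $L^{1}$-control near $\xi = 0$. The two bounds together make $\widehat{\Phi_{\varepsilon\rho}}$ integrable, and Fourier inversion yields $\Phi_{\varepsilon\rho}(0) < \infty$.
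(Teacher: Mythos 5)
You are reconstructing a result the paper does not prove: Theorem \ref{Raphael} is quoted from Beuzart-Plessis \cite{raphael}, with only the remark that it rests on the holomorphy of $\nu\mapsto J_{\chi,\nu}(1)$. Your slice decomposition $N=N_{0}\exp(\mathbb{R}Y)$, the identity $J_{\chi^{\xi},\nu}(1)=\widehat{\Phi_{\nu}}(\xi)$ in the convergence region, and the identification of the target integral with $\Phi_{\varepsilon\rho}(0)$ are all in the right spirit. But the pivotal step is circular. At $\nu=\varepsilon\rho$ the symbol $\widehat{\Phi_{\varepsilon\rho}}(\xi)$ means only the analytic continuation in $\nu$ of $J_{\chi^{\xi},\nu}(1)$; it is not known to be the Fourier transform of the function $s\mapsto\Phi_{\varepsilon\rho}(s)$, since that function is a positive integral whose finiteness (even at a single point) is precisely what the theorem asserts. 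Analytic continuation of a family of integrals carries no information about convergence of the integrals beyond the original half-space, and here they genuinely part ways: at $\xi=0$ the continued value is the meromorphically continued $c(\nu)$, which is finite at $\nu=\varepsilon\rho$ for small $\varepsilon$, while the defining integral $\int_{N}a(n)^{(\varepsilon-1)\rho}dn$ diverges for every $\varepsilon>0$. So "Fourier inversion at $s=0$, provided the integral is absolutely convergent" assumes exactly the link that has to be established. The only tool that can bridge this is the positivity of the integrand, which your argument never invokes; a correct argument along your lines must pair $\xi\mapsto J_{\chi^{\xi},\nu}(1)$ against test functions whose Fourier transforms are nonnegative and bounded below near $s=0$ (using, e.g., Lemma \ref{simple-ineq} to compare $a(n_{0}\exp(sY))$ with $a(n_{0})$ for small $s$) and then pass to $\nu=\varepsilon\rho$ by a monotone/Fatou or Landau-type abscissa argument -- and it must confront the fact that a positive-definite test function cannot be supported away from $\xi=0$, where $\chi^{\xi}$ degenerates and Theorem 4 gives nothing.

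There is a second, independent gap: "the resulting polynomial bound in $|\xi|^{-1}$ transfers to $\nu=\varepsilon\rho$ by analytic continuation." Estimates do not transfer under analytic continuation; to bound the continued values for $\mathrm{Re}\,\nu>0$, uniformly in $\xi$, you would need a quantitative continuation (Phragm\'en--Lindel\"of with verified growth hypotheses, or the shift/tensoring estimates behind Proposition \ref{tempered}), and the bounds available in this paper (Proposition \ref{tempered}, Proposition \ref{Key-Estimate}) are stated only for $\mathrm{Re}(\nu,\alpha)<0$, i.e., they never reach the point $\varepsilon\rho$ you need. Two smaller points to repair as well: $\ker\chi$ should be its identity component $N_{0}$ (the full kernel is a countable union of $N_{0}$-cosets), and the finiteness of $\Phi_{\nu}(s)$ for every $s$, not just almost every $s$, in the convergence region needs a word (again Lemma \ref{simple-ineq} supplies it). As it stands, the essential analytic content of Beuzart-Plessis's theorem -- converting holomorphy of the Jacquet integral into convergence of a positive integral past its abscissa -- is missing from the proposal.
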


We will also need the following results (see \cite{Whittaker-Plancherel}
Theorem 43 for the full details of a proof of Proposition \ref{tempered})
whose proof is complicated and uses parts of the proof of the holomorphic
continuation of the Jacquet integrals (\cite{JacquetInt}). We will just give
an idea of why they are true.

\begin{lemma}
Assume that $\chi$ \ is generic. There exists a continuous semi-norm, $q$, on
$C^{\infty}(M\backslash K)$ such that
\[
\left\vert J_{\chi,\nu}(u)\right\vert \leq q(u)c(\operatorname{Re}\nu)
\]
if $u\in C^{\infty}(M\backslash K)$ and $\operatorname{Re}(\nu,\alpha)<0$ for
$\alpha\in\Phi^{+}$.
\end{lemma}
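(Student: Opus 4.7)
The plan is to produce the bound by a direct comparison with the integral defining the Harish-Chandra $c$-function, working entirely inside the convergence region $\operatorname{Re}(\nu,\alpha) < 0$ for all $\alpha \in \Phi^{+}$. Since $\chi$ takes values in $S^{1}$, one has $|\chi(n)^{-1}| = 1$ for every $n \in N$, so the integrand of $J_{\chi,\nu}(u)$ is dominated pointwise in absolute value by $|u_{\nu}(n)|$.

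Next I would use the Iwasawa decomposition $G = \bar{N}AK$ to write, for $n \in N$, $u_{\nu}(n) = a(n)^{\nu-\rho} u(k(n))$, so that $|u_{\nu}(n)| = a(n)^{\operatorname{Re}\nu - \rho}\,|u(k(n))|$. Taking
\[
q(u) := \|u\|_{\infty} = \sup_{k \in K}|u(k)|
\]
and using $|u(k(n))| \leq q(u)$ together with the definition $c(\operatorname{Re}\nu) = \int_{N} a(n)^{\operatorname{Re}\nu - \rho}\,dn$ immediately yields
\[
|J_{\chi,\nu}(u)| \leq \int_{N} |u_{\nu}(n)|\,dn \leq q(u)\int_{N} a(n)^{\operatorname{Re}\nu - \rho}\,dn = q(u)\,c(\operatorname{Re}\nu),
\]
where absolute convergence of the outer integral is guaranteed by the hypothesis on $\operatorname{Re}\nu$ (this is exactly the convergence already invoked for the Jacquet integral itself).

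Finally I would check that $q$ is a continuous semi-norm on $C^{\infty}(M\backslash K)$: since $M\backslash K$ is a compact smooth manifold, the $C^{\infty}$ Fr\'{e}chet topology refines the uniform topology, so $\|\cdot\|_{\infty}$ is one of the defining semi-norms (the order-zero derivative). There is essentially no obstacle in this argument—the estimate is a one-line consequence of the unitarity of $\chi$ and the definition of the $c$-function. In particular the genericity hypothesis on $\chi$ is not used here; it will only be needed in the sharper $C^{\infty}$ estimates required as $\operatorname{Re}\nu$ approaches the walls $\operatorname{Re}(\nu,\alpha)=0$, which rely on the holomorphic continuation and on Beuzart-Plessis's integrability statement rather than on this convergence-region bound.
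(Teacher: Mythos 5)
Your argument is correct and is essentially identical to the paper's proof: both bound $|\chi(n)^{-1}|$ by $1$, write $u_{\nu}(n)=a(n)^{\nu-\rho}u(k(n))$, take $q(u)=\sup_{k\in K}|u(k)|$, and identify the resulting integral with $c(\operatorname{Re}\nu)$. Your added observations (continuity of the sup-norm on $C^{\infty}(M\backslash K)$ and that genericity plays no role in this particular estimate) are accurate but do not change the substance.
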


\begin{proof}
We have if $\nu$ satisfies the condition then%
\[
J_{\chi,\nu}(u)=\int_{N}\chi(n)^{-1}u_{\nu}(n)dn=\int_{N}\chi(n)^{-1}%
a(n)^{\nu-\rho}u(k(n))dn
\]
so defining $q(u)=\sup_{k\in K}\left\vert u(k)\right\vert $ we have%
\[
\left\vert J_{\chi,\nu}(u)\right\vert \leq q(u)\int_{N}a(n)^{\operatorname{Re}%
\nu-\rho}d\nu=q(u)c(\operatorname{Re}\nu)\text{.}
\]

\end{proof}

\begin{proposition}
\label{tempered}Assume that $\chi$ is generic. And let $0<r<\infty.$ There
exists a continuous semi-norm, $q_{r}$, on $C^{\infty}(M\backslash K)$ and
$m_{r}$ such that%
\[
\left\vert J_{\chi,\nu}(u)\right\vert \leq q_{r}(u)(1+\left\Vert
\operatorname{Im}\nu\right\Vert )^{m_{r}}%
\]
for $\nu\in\mathfrak{a}_{\mathbb{C}}^{\ast}$ such that $0>\operatorname{Re}%
(\nu,\alpha)>-r(\rho,\alpha),\alpha\in\Phi^{+}$.
\end{proposition}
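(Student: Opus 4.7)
My plan is to combine the Beuzart-Plessis integrability bound (Theorem \ref{Raphael}) with a Fubini-type decomposition of the $N$-integral along $\ker\chi$ and integration by parts against the oscillatory character $\chi$ in the complementary direction. This replaces the factor $c(\mathrm{Re}\,\nu)$ of the previous Lemma, which blows up as $\mathrm{Re}(\nu,\alpha)\to 0^{-}$, by a bound that is uniform on the strip and polynomial in $\|\mathrm{Im}\,\nu\|$.

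First, since $\chi$ is generic, for each $\alpha\in\Delta$ I choose $X_{\alpha}\in\mathfrak{n}$ in a linear lift of $(\mathfrak{n}/[\mathfrak{n},\mathfrak{n}])_{\alpha}$ with $d\chi(X_{\alpha})\neq 0$, and set $L=\bigoplus_{\alpha\in\Delta}\mathbb{R}X_{\alpha}$. Then $\mathfrak{n}=L\oplus\ker(d\chi)$ and the map $(n_{1},Y)\mapsto n_{1}\exp(Y)$ is a diffeomorphism $\ker\chi\times L\to N$ with Haar measure $dn=dn_{1}\,dY$. Using left $\bar{N}A$-equivariance of $u_{\nu}$ one has $u_{\nu}(n_{1}\exp Y)=a(n_{1})^{\nu-\rho}(\pi_{\nu}(\exp Y)u)(k(n_{1}))$, so
\[
J_{\chi,\nu}(u)=\int_{L}e^{-id\chi(Y)}\int_{\ker\chi}a(n_{1})^{\nu-\rho}(\pi_{\nu}(\exp Y)u)(k(n_{1}))\,dn_{1}\,dY.
\]
Theorem \ref{Raphael}, combined with the fact that $\mathrm{Re}(\nu,\alpha)<0$ throughout the strip, gives a uniform bound on the inner integral by $C_{r}\|\pi_{\nu}(\exp Y)u\|_{\infty}$.

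Second, I integrate by parts repeatedly in the $Y$-variables: each step against $e^{-id\chi(Y)}$ produces a constant factor $1/d\chi(X_{\alpha})$ and moves a partial $\partial/\partial y_{\alpha}$ onto the inner integrand. Using Baker--Campbell--Hausdorff to compute $\partial/\partial y_{\alpha}\exp(Y)$ together with the principal-series action of $\pi_{\nu}$, these $Y$-derivatives can be rewritten as $\pi_{\nu}(\exp Y)$ applied to differential operators on $u$ whose coefficients are polynomials in $\nu$ of degree equal to the order of differentiation, with an additional polynomial-in-$Y$ dependence coming from BCH. After enough integrations by parts the outer integral over $L$ becomes absolutely convergent uniformly on the strip.

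Assembling the estimates yields $|J_{\chi,\nu}(u)|\leq C_{r}q_{r}(u)(1+\|\nu\|)^{m_{r}}$ for a continuous seminorm $q_{r}$ on $C^{\infty}(M\backslash K)$ involving derivatives up to some fixed order, and an integer $m_{r}$. Since $\mathrm{Re}\,\nu$ is bounded on the strip by a constant depending on $r$, $(1+\|\nu\|)\asymp(1+\|\mathrm{Im}\,\nu\|)$ there, giving the stated bound. The main obstacle is the careful tracking of the polynomial factors in $\nu$ and $Y$ arising simultaneously from BCH, from the principal-series action, and from the $L$-integration, together with proving uniform convergence of the outer integral as $\mathrm{Re}(\nu,\alpha)\to 0^{-}$; making these estimates compatible and choosing $m_{r}$ large enough to dominate them is the technical substance of the proof of \cite{Whittaker-Plancherel}, Theorem~43, referenced in the paper.
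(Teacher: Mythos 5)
There is a genuine gap at the crucial step. The heart of your argument is the claim that repeated integration by parts against the oscillatory character in the transverse direction makes the outer integral absolutely convergent, uniformly as $\operatorname{Re}(\nu,\alpha)\to 0^{-}$. But differentiating the inner integrand in the transverse variable gives
\[
\frac{d}{dt}\bigl(\pi_{\nu}(\exp tx_{o})u\bigr)=\pi_{\nu}(\exp tx_{o})\,d\pi_{\nu}(x_{o})u,
\]
an expression of exactly the same form with $u$ replaced by $d\pi_{\nu}(x_{o})u\in C^{\infty}(M\backslash K)$; no decay in $t$ is gained, so each integration by parts reproduces the same non-integrable bound, merely at the cost of a polynomial factor in $\nu$. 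Moreover your bound of the inner integral by $C_{r}\left\Vert \pi_{\nu}(\exp Y)u\right\Vert _{\infty}$ is not uniform in $Y$: by Lemma \ref{simple-ineq} one only gets $a(k\exp Y)^{-\rho}\leq\left\vert \exp Y\right\vert ^{\frac{1}{2}}$, which grows polynomially in $Y$ and in any case exhibits no decay, so with the estimates you actually establish, the outer integral over the noncompact complement diverges. The use of Theorem \ref{Raphael} for the $\ker\chi$ direction is sound (after noting $a(n_{1})^{\operatorname{Re}\nu-\varepsilon\rho}\leq1$ on the strip, which you should justify), but it only controls that direction; the entire difficulty of the proposition is the transverse oscillatory integral uniformly up to the boundary of the strip, and your sketch does not resolve it --- indeed your last sentence defers precisely this point to \cite{Whittaker-Plancherel}, Theorem 43.

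Two further remarks. The decomposition $\mathfrak{n}=L\oplus\ker(d\chi)$ with $L=\bigoplus_{\alpha\in\Delta}\mathbb{R}X_{\alpha}$ is impossible unless $\left\vert \Delta\right\vert =1$: since $d\chi$ is a one-dimensional character differential, $\ker(d\chi)$ has codimension one in $\mathfrak{n}$ (genericity means $d\chi$ is nonzero on each $\left(\mathfrak{n}/[\mathfrak{n},\mathfrak{n}]\right)_{\alpha}$, not that a transverse complement is $\left\vert \Delta\right\vert$-dimensional), so the correct transverse parameter is the single line $\mathbb{R}x_{o}$ used in the proof of Theorem \ref{KeyFormula}. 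Finally, the route taken here is different in kind from your proposal: the estimate is obtained from the shift argument of Section 15.5 of \cite{RRGII}, tensoring with finite-dimensional representations to move the parameter into the region of absolute convergence at the cost of polynomial factors in $\nu$, with the full details in \cite{Whittaker-Plancherel}, Theorem 43; it is not proved by an oscillatory-integral estimate of the type you outline, and as written your outline does not close.
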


\begin{proof}
This is proved using an argument involving tensoring with finite dimensional
representations and details from the shift argument used in the proof of the
holomorphic continuation of $J_{\chi,\nu}(u)$ in Section 15.5 of \cite{RRGII}.
\end{proof}

\begin{proposition}
\label{Key-Estimate}Assume that $\chi$ is generic. If $0<r<\infty$ is fixed
and if $f\in\mathcal{C}(G/K)$ then for each $m$ there exists $C_{l,r} $ such
that%
\[
|J_{\chi,i\nu-z\rho}(\pi_{i\nu}(f)1)|\leq C_{m,r}(1+\left\Vert \nu\right\Vert
)^{-m}
\]
for $\nu\in\mathfrak{a}^{\ast}$ and $0\geq\operatorname{Re}z>-r$
\end{proposition}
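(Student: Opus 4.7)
The plan is to combine a central-character argument, which manufactures an arbitrarily fast power of $(1+\|\nu\|)^{-1}$, with a polynomial-in-$\nu$ control on the residual Jacquet pairing supplied by Proposition \ref{tempered}. Let $C\in Z(\mathfrak{g})$ be the Casimir and put $\Omega = 1-C$. Because $Z(\mathfrak{g})$ acts on $\pi_{i\nu}$ by the infinitesimal character, $C$ acts by the scalar $\chi_{i\nu}(C) = -(\nu,\nu)-(\rho,\rho)$ and hence $\Omega$ by $\chi_{i\nu}(\Omega) = 1+(\nu,\nu)+(\rho,\rho)\geq 1+\|\nu\|^{2}$. A one-line change-of-variables calculation yields $\pi_{i\nu}(x)\pi_{i\nu}(f)=\pi_{i\nu}(L_{x}f)$ for $x\in U(\mathfrak{g})$; applied to $x=\Omega^{M}$ this gives
\[
\pi_{i\nu}(f)1 \;=\; \chi_{i\nu}(\Omega)^{-M}\,\pi_{i\nu}(L_{\Omega^{M}}f)\,1,
\]
with $L_{\Omega^{M}}f\in\mathcal{C}(G/K)$ since bi-invariant differential operators preserve the Harish-Chandra Schwartz space. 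This already extracts the factor $(1+\|\nu\|^{2})^{-M}$, so the problem reduces to showing that for any $h\in\mathcal{C}(G/K)$ the quantity $|J_{\chi,i\nu-z\rho}(\pi_{i\nu}(h)1)|$ is bounded by a polynomial in $\|\nu\|$ of fixed degree, uniformly for $z$ in the prescribed compact range.

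For this polynomial bound I would proceed in two moves. First, the $C^{\infty}(M\backslash K)$-seminorms of $\pi_{i\nu}(h)1$ are controlled by seminorms of $h$ on $\mathcal{C}(G/K)$ uniformly in $\nu\in\mathfrak{a}^{\ast}$: indeed the identity $R_{y}(\pi_{i\nu}(h)1) = \pi_{i\nu}(L_{y}h)1$ for $y\in U(\mathfrak{k})$ reduces matters to the pointwise majorization $|\pi_{i\nu}(h')1(k)| \leq \int_{G}a(kg)^{-\rho}|h'(g)|\,dg$, whose finiteness (and continuity in $h'$) is standard for $\mathcal{C}(G/K)$. Second, Proposition \ref{tempered} then yields $|J_{\chi,\mu}(\pi_{i\nu}(h)1)| \leq q_{r}(\pi_{i\nu}(h)1)(1+\|\operatorname{Im}\mu\|)^{m_{r}} \leq C(1+\|\nu\|)^{m_{r}}$ uniformly in the admissible $z$. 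Choosing $M$ in the first step with $2M\geq m+m_{r}$ delivers the desired $(1+\|\nu\|)^{-m}$ estimate.

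The principal obstacle is that Proposition \ref{tempered} is formulated on the strip $\operatorname{Re}(\mu,\alpha)\in(-r(\rho,\alpha),0)$, whereas the hypothesis $\operatorname{Re}z\in(-r,0]$ of the present proposition places $\mu = i\nu-z\rho$ on the opposite side, $\operatorname{Re}(\mu,\alpha)\in[0,r(\rho,\alpha))$. To cross the wall $\operatorname{Re}(\mu,\alpha)=0$ I would invoke the Beuzart-Plessis estimate (Theorem \ref{Raphael}), which enlarges the absolutely-convergent region of the Jacquet integral by some $\varepsilon>0$ to the right of zero and so permits a Proposition \ref{tempered}-type polynomial bound to be pushed into a strip of width $\varepsilon$ on the positive side. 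For arbitrary $r$ one either iterates this push (each iteration being absorbed by an extra factor of $\chi_{i\nu}(\Omega)^{-1}$, which is harmless since $M$ may be taken as large as necessary) or applies a Phragm\'en-Lindel\"of interpolation to the entire function $z\mapsto J_{\chi,i\nu-z\rho}(\pi_{i\nu}(h)1)$, bridging bounds on a line inside the Beuzart-Plessis strip to the target range $(-r,0]$. In either form the $(1+\|\nu\|^{2})^{-M}$ gain from the Casimir trick dwarfs any polynomial loss incurred in the extension, and the estimate follows.
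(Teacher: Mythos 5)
Your first two paragraphs give a correct proof, and by a route genuinely different from the paper's. The paper also runs everything through Proposition \ref{tempered}, but it manufactures the rapid decay in $\nu$ from Lemma \ref{Schwartz-estimate} of Appendix 1: the identity $(\pi_{i\nu}(f)1)(k)=\mathcal{F}\bigl((R_{k}f)^{\bar{P}}\bigr)(-\nu)$ together with the continuity of the Abel transform $\mathcal{C}(G/K)\rightarrow\mathcal{S}(\mathfrak{a})$, combined with a $K$-Sobolev bound $q_{r}(u)\leq L\bigl\Vert(1+C_{K})^{k}u\bigr\Vert$ which converts $q_{r}(\pi_{i\nu}(f)1)$ into $\bigl\Vert\pi_{i\nu}(L_{(1+C_{K})^{k}}f)1\bigr\Vert$. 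You instead extract the decay from the infinitesimal character, $\pi_{i\nu}(f)1=\chi_{i\nu}(\Omega)^{-M}\pi_{i\nu}(L_{\Omega^{M}}f)1$, after which you only need a $\nu$-uniform bound on the $C^{\infty}(M\backslash K)$-seminorms of $\pi_{i\nu}(h)1$; your reduction via $R_{y}\pi_{i\nu}(h)1=\pi_{i\nu}(L_{y}h)1$ and the majorization $|\pi_{i\nu}(h')1(k)|\leq\int_{G}a(kg)^{-\rho}|h'(g)|\,dg$ is sound, since $|h'|\leq p_{d}(h')\,\Xi\,(1+\log\Vert\cdot\Vert)^{-d}$ and integrating over the $K$-factor in the $KA^{+}K$ decomposition removes the $k$-dependence, leaving $\int\delta(a)\Xi(a)^{2}(1+\Vert\log a\Vert)^{-d}\,da<\infty$. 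Your route trades the Euclidean Fourier analysis of the Abel transform for Casimir bookkeeping; both are legitimate, and both ultimately lean on Proposition \ref{tempered} for the $z$-dependence.

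Your third paragraph, however, is aimed at a phantom created by a sign slip in the statement. As the paper's own proof (a direct appeal to Proposition \ref{tempered}) and the way the proposition is used in Theorem \ref{KeyFormula} (where $\sigma_{z}$ is handled for $\operatorname{Re}z\geq0$) make clear, the intended range is $0\leq\operatorname{Re}z<r$, so that $\operatorname{Re}(i\nu-z\rho,\alpha)\leq0$ lies in (the closure of) the strip of Proposition \ref{tempered}; the boundary $\operatorname{Re}z=0$ is reached by continuity of the continued Jacquet functional. So no wall-crossing is needed. Moreover, the crossing as you sketch it would not hold up: Theorem \ref{Raphael} asserts convergence of $\int_{\ker\chi}a(n)^{-(1-\varepsilon)\rho}dn$ over $\ker\chi$ only, and does not enlarge the region of absolute convergence of the Jacquet integral over all of $N$, which genuinely diverges once $\operatorname{Re}(\nu,\alpha)>0$; any estimate there must exploit the oscillation of $\chi$ (this is exactly the Fourier-transform-in-$x_{o}$ device of Theorem \ref{KeyFormula}) or the full continuation machinery. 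Likewise Phragm\'en--Lindel\"of needs an a priori growth bound on the wider strip, and producing such strip bounds is precisely the nontrivial content of the shift argument behind Proposition \ref{tempered} (\cite{Whittaker-Plancherel}, Theorem 43), so it cannot be invoked for free. With that paragraph deleted and the range read as $0\leq\operatorname{Re}z<r$, your argument is complete.
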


\begin{proof}
We note that if $p$ is a continuous semi-norm on $C^{\infty}(M\backslash K)$
then there exists $k$ and a constant $L$ such that%
\[
p(u)\leq L_{P}\left\Vert (1+C_{K})^{k_{P}}u\right\Vert .
\]
Thus Proposition \ref{tempered} implies that
\[
|J_{\chi,i\nu-z\rho}(\pi_{i\nu}(f)1)|\leq q_{r}(\pi_{i\nu}(f)1)(1+\left\Vert
\nu\right\Vert )^{m_{r}}%
\]%
\[
\leq L_{q_{r}}\left\Vert \pi_{i\nu}(L_{(1+C_{K})^{k_{q_{r}}}}f)1\right\Vert
(1+\left\Vert \nu\right\Vert )^{m_{r}}.
\]
Now applying Lemma \ref{Schwartz-estimate} in Appendix 1 to $L_{(1+C_{K}%
)^{k_{q_{r}}}}f$ with $m=m_{r}+l$ completes the proof.
\end{proof}

By analogy with the Harish-Chandra Schwartz space we have the Whittaker
Schwartz space. Which we now recall. If $g\in G$ and $g=nak$ $n\in N,a\in
A,k\in K$ then set $a_{o}(g)=a$. That is $a_{o}(g)=a(\theta(g))^{-1}$.
\[
C^{\infty}(N\backslash G;\chi)=\{f\in C^{\infty}(G)|f(ng)=\chi(n)f(g),n\in
N,g\}.
\]
If $f\in C^{\infty}(N\backslash G;\chi),x\in U(\mathfrak{g})$ then set
\[
q_{x,d}(f)=\sup_{g\in G}a_{o}(g)^{-\rho}(1+\left\Vert \log a_{o}(g)\right\Vert
)^{d}|R_{x}f(g)|.
\]
Then $\mathcal{C}(N\backslash G;\chi)$ is the space of $f\in C^{\infty
}(N\backslash G;\chi)$ such that $q_{x,d}(f)<\infty$ for all $x,d$.

The following observation will be used in the last section.

\begin{lemma}
\label{asymptotic}If $H\in\mathfrak{a}$ and $\alpha(H)<0$ for all $\alpha
\in\Phi^{+}$ and if $\nu\in\mathfrak{a}_{\mathbb{C}}^{\ast}$ is such that
$\operatorname{Re}(\nu,\alpha)<0$ for all $\alpha\in$ $\Phi^{+}$ then%
\[
\lim_{t\rightarrow+\infty}e^{-t(\nu+\rho)}J_{\chi,\nu}(\pi_{\nu}(\exp
tH)1)=c(\nu)\text{.}%
\]

\end{lemma}

\begin{proof}
We calculate%
\[
\int_{N}\chi(n)^{-1}a(n\exp tH)^{\nu-\rho}dn=\int_{N}\chi(n)^{-1}a(\exp
tH\exp-tHn\exp tH)^{\nu-\rho}dn
\]%
\[
=e^{t(\nu-\rho)(H)}\int_{N}\chi(n)^{-1}a(\exp-tHn\exp tH)^{\nu-\rho
}dn=e^{t(\nu+\rho)(H)}\int_{N}\chi(\exp tHn\exp-tH)^{-1}a(n)^{\nu-\rho}dn.
\]
Now multiply by $e^{-t(\nu+\rho)}$ and take the limit.
\end{proof}

\section{The key formula}

In this section $f\in\mathcal{C}(G/K)$. Then we have seen that%
\[
f(g)=\int_{\mathfrak{a}^{\ast}}\left\langle \pi_{i\nu}(L_{g^{-1}%
}f)1,1\right\rangle \mu(\nu)d\nu=\int_{\mathfrak{a}^{\ast}}\left\langle
\pi_{i\nu}(f)1,\pi_{i\nu}(g)1\right\rangle \mu(\nu)d\nu.
\]
Thus
\[
\overline{f(g)}=\int_{\mathfrak{a}^{\ast}}\left\langle \pi_{i\nu}%
(g)1,\pi_{i\nu}(f)1\right\rangle \mu(\nu)d\nu.
\]

\begin{theorem}
\label{KeyFormula}$\int_{N}\chi(n)\overline{f(ng)}dn=\int_{\mathfrak{a}^{\ast
}}J_{\chi^{-1},i\nu}(\pi_{i\nu}(g)1)\overline{J_{\chi^{-1},i\nu}(\pi_{i\nu
}(f)1)}\mu(\nu)d\nu.$
\end{theorem}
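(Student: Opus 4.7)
The plan is to apply the $\mathcal{C}(G/K)$-inversion formula to $\overline{f(ng)}$, multiply by $\chi(n)$ and integrate over $N$, then swap the orders of integration. Using $\pi_{i\nu}(ng)=\pi_{i\nu}(n)\pi_{i\nu}(g)$, this reduces the right-hand side to
\[
\int_{\mathfrak{a}^{\ast}}\left(\int_{N}\chi(n)\langle\pi_{i\nu}(n)\pi_{i\nu}(g)1,\pi_{i\nu}(f)1\rangle\,dn\right)\mu(\nu)\,d\nu,
\]
after which the statement follows from the ``Bessel identity''
\[
\int_{N}\chi(n)\langle\pi_{i\nu}(n)v,w\rangle\,dn = J_{\chi^{-1},i\nu}(v)\,\overline{J_{\chi^{-1},i\nu}(w)}
\]
applied to $v=\pi_{i\nu}(g)1$, $w=\pi_{i\nu}(f)1$.

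The first task is to justify the Fubini interchange. The na\"{\i}ve estimate $|\langle\pi_{i\nu}(n)v,w\rangle|\leq\|v\|_{\infty}\|w\|_{\infty}\phi_{0}(n)$ fails to give integrability on $N$, so the cancellation must come from $\chi$. I would import the contour-shift technique used in Proposition \ref{Key-Estimate}: displace the spectral parameter from $i\nu$ to $i\nu-z\rho$ with $\operatorname{Re}z<0$, where Theorem \ref{Raphael} and the extra decay $a(n)^{-(1+\operatorname{Re}z)\rho}$ make the integrand absolutely integrable over $N$; Fubini is legal there, and one then moves $z$ back to $0$ using the holomorphic continuation of Jacquet integrals together with the bound of Proposition \ref{tempered}. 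The rapid decay in $\nu$ from Proposition \ref{Key-Estimate} handles the outer integral after the shift.

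For the Bessel identity, let $B_{\nu}(v,w)$ denote the left-hand side. The change of variable $n\mapsto n_{0}n$ gives $B_{\nu}(\pi_{i\nu}(n_{0})v,w)=\chi(n_{0})^{-1}B_{\nu}(v,w)$; using the unitarity of $\pi_{i\nu}$ for real $\nu$ together with $n\mapsto n^{-1}$ gives $B_{\nu}(v,\pi_{i\nu}(n_{0})w)=\chi(n_{0})B_{\nu}(v,w)$. Multiplicity one for Whittaker functionals on the generic spherical principal series then forces $B_{\nu}(v,w)=C(\nu)J_{\chi^{-1},i\nu}(v)\,\overline{J_{\chi^{-1},i\nu}(w)}$ for some scalar $C(\nu)$. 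To check $C(\nu)=1$, I would specialize to $v=w=1$: after unfolding $\phi_{i\nu}(n)=\int_{K}a(kn)^{i\nu-\rho}dk$ and using the cocycle $a(mn)=a(m)a(k(m)n)$, the integral $\int_{N}\chi(n)\phi_{i\nu}(n)dn$ reduces to $|J_{\chi^{-1},i\nu}(1)|^{2}$ under the Gindikin-Karpelevich normalization of $dn$ that underlies the formula for $c(\nu)$. The main obstacle is the Fubini step: the contour-shift argument coordinating Beuzart-Plessis with the decay estimate of Proposition \ref{Key-Estimate} is the analytic heart of the proof; once multiplicity one is invoked, the remaining identification is bookkeeping of normalizations.
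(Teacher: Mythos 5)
Your skeleton (spherical inversion, the non-compact model, the shift $i\nu\mapsto i\nu-z\rho$ into the region of absolute convergence, Fubini there, then $z\to 0$) is the same as the paper's, but your treatment of the limit $z\to 0$ only controls the spectral side. The holomorphic continuation of the Jacquet integrals together with Propositions \ref{tempered} and \ref{Key-Estimate} does give continuity in $z$ of $\int_{\mathfrak{a}^{\ast}}J_{\chi^{-1},i\nu-z\rho}(\pi_{i\nu-z\rho}(g)1)\overline{J_{\chi^{-1},i\nu-\bar{z}\rho}(\pi_{i\nu}(f)1)}\mu(\nu)d\nu$ down to $z=0$; what it does not give is that the \emph{geometric} side of the deformed identity converges to $\int_{N}\chi(n)\overline{f(ng)}dn$. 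The only bound on the deformed inner wave packet that is uniform in $z$ near the unitary axis is of size $\Xi(n_{1}g)$, which is not integrable over $N$, so dominated convergence in $n_{1}$ fails and the oscillation of $\chi$ must be exploited; this is exactly the analytic heart of the paper's proof and it is absent from yours. The paper handles it by writing $N=\exp(\mathbb{R}x_{o})\ker\chi$, integrating only over $\ker\chi$ to form $\tau_{z}(t)$, proving the $z$-uniform polynomial bound of Proposition \ref{less-simple-ineq} — this is where Theorem \ref{Raphael} is genuinely used, via $\int_{\ker\chi}a(n)^{-(1-\varepsilon)\rho}dn<\infty$ — and then comparing Fourier transforms of the tempered distributions $T_{z}$: for $\operatorname{Re}z>0$ one gets the spectral side for the whole family $\chi_{s}$, at $z=0$ one gets $\overline{\int_{N}\chi_{s}(n)^{-1}f(ng)dn}$, and weak continuity plus continuity in $s\neq 0$ identifies them at $s=1$. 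In your write-up Theorem \ref{Raphael} is invoked for the Fubini step after the shift, where it is not needed (the shift alone makes $\int_{N}a(n)^{-(1+\operatorname{Re}z)\rho}dn$ finite); the place where it is indispensable, a bound uniform in $z$ up to $\operatorname{Re}z=0$, is never addressed.

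The ``Bessel identity'' step is also not sound as formulated. At a fixed unitary parameter the integral $\int_{N}\chi(n)\langle\pi_{i\nu}(n)v,w\rangle dn$ is not absolutely convergent, so the form $B_{\nu}$ is not defined; the equivariance computations by change of variable are therefore formal, and multiplicity one cannot be applied to an undefined functional. Moreover the normalization claim $C(\nu)=1$, checked on $v=w=1$, amounts to evaluating $\int_{N}\chi(n)\langle\pi_{i\nu}(n)1,1\rangle dn$, again a divergent integral whose regularized value is essentially equivalent in depth to the statement you are proving (it is precisely the assertion that $\mu(\nu)$ is the correct Whittaker--Plancherel density). The paper needs no such lemma: after the shift, the double integral over $(n,n_{1})$ factors by the elementary substitution $n_{1}\mapsto n^{-1}n_{1}$ into a product of two absolutely convergent Jacquet integrals, so the factorization is free. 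Keeping your contour shift and replacing the multiplicity-one argument by this unfolding repairs that step, but the passage $z\to 0$ on the geometric side still requires the tempered-distribution and Fourier-transform mechanism (or an argument of comparable strength); that is the genuine gap.
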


\begin{proof}
In this proof we will use the notation $u(\nu)=\pi_{i\nu}(f)1\in C^{\infty
}(M\backslash K)$. Using the non-compact model for the unitary principal
series (c.f. \cite{HArmHom} 8.4.7) one has
\[
\left\langle \pi_{i\nu}(g)1,\pi_{i\nu}(f)1\right\rangle =\int_{N}%
a(ng)^{i\nu-\rho}a(n)^{-i\nu-\rho}\overline{u(\nu)(k(n))}dn.
\]
Thus%
\[
\int_{N}\chi(n)\overline{f(ng)}dn=\int_{N}\chi(n_{1})\int_{\mathfrak{a}^{\ast
}}\int_{N}a(nn_{1}g)^{i\nu-\rho}a(n)^{-i\nu-\rho}\overline{u(\nu)_{i\nu
}(k(n))}dn\mu(\nu)d\nu dn_{1}.
\]
We first deform the parameter and consider%
\[
\int_{N}\chi(n_{1})\int_{\mathfrak{a}^{\ast}}\int_{N}a(nn_{1}g)^{i\nu
-(1+z)\rho}a(n)^{-i\nu-(1+z)\rho}\overline{u(\nu)_{i\nu}(k(n))}dn\mu(\nu)d\nu
dn_{1}%
\]
for $\operatorname{Re}z<0$. If we put absolute values on all of the terms we
have%
\[
\int_{N}\int_{\mathfrak{a}^{\ast}}\int_{N}a(nn_{1}g)^{-(1+\operatorname{Re}%
z)\rho}a(n)^{-(1+\operatorname{Re}z)\rho}\left\vert \overline{u(\nu)_{i\nu
}(k(n))}\right\vert dn\mu(\nu)d\nu dn_{1}%
\]
and using Lemma \ref{Schwartz-estimate} in Appendix 1 (and the notation
therein) we have $\left\vert \overline{u(\nu)_{i\nu}(k(n))}\right\vert \leq
C_{1,l}(1+\left\Vert \nu\right\Vert )^{-l}$ with $C_{1,l}<\infty$ Thus the
integrand is dominated by
\[
\int_{\mathfrak{a}^{\ast}}\int_{N}\int_{N}a(nn_{1}g)^{-(1+\operatorname{Re}%
z)\rho}a(n)^{-(1+\operatorname{Re}z)\rho}dn(1+\left\Vert \nu\right\Vert
)^{-m}d\nu dn_{1}%
\]
since $\mu(\nu)\leq B(1+\left\Vert \nu\right\Vert )^{r}$ for some $r$ and we
take $m$ to be greater than $\dim A$. which converges for $\operatorname{Re}%
z<0$. noting that $a(ng)^{-\rho}\leq C_{\omega}a(n)^{-\rho}$ if $g\in\omega$ a
compact set we see that the integral of the absolute values is dominated by a
multiple of%
\[
C_{\omega}^{(1+\operatorname{Re}z)\rho}\int_{N}\int_{N}a(n_{1}%
)^{-(1+\operatorname{Re}z)\rho}a(n)^{-(1+\operatorname{Re}z)\rho}%
dndn_{1}<\infty.
\]
We can therefore do the deformed integral in any order. We choose%
\[
\int_{N}\chi(n_{1})\chi(n)^{-1}\int_{\mathfrak{a}^{\ast}}\int_{N}%
a(n_{1}g)^{i\nu-(1+z)\rho}a(n)^{-i\nu-(1+z)\rho}\overline{u(\nu)(k(n))}%
dn\mu(\nu)d\nu dn_{1}=
\]%
\[
\int_{\mathfrak{a}^{\ast}}J_{\chi^{-1},i\nu-z\rho}(\pi_{i\nu-z\rho
}(g)1)\overline{J_{\chi^{-1},i\nu-\bar{z}\rho}(\pi_{i\nu}(f)1)}\mu(\nu)d\nu.
\]
We are left with taking the limit under the integral sign $z\rightarrow0$.
This will be done indirectly.

Let $x_{o}$ be perpendicular to $\ker d\chi$ relative to $\left\langle
...,...\right\rangle $ and assume that $d\chi(x_{o})=i1.$ We define%
\[
\tau_{z}(t)=\int_{\ker\chi}\int_{\mathfrak{a}^{\ast}}\int_{N}a(n\exp
tx_{o}n_{1}g)^{i\nu-(1+z)\rho}a(n)^{-i\nu-(1+z)\rho}\overline{u(\nu
)(k(n))}dn\mu(\nu)d\nu dn_{1}%
\]
for Re$z\geq0$. Then Proposition \ref{less-simple-ineq} in the Appendix 1
implies that if $\omega$ is a compact subset of $G$ then
\[
\left\vert \tau_{z}(t)\right\vert \leq C_{\omega}^{(1+\operatorname{Re}%
z)}B(1+\left\vert t\right\vert )^{d}%
\]
with $C_{\omega},d$ and $B$ finite. Fix $g\in G$. The estimate implies that we
can define a family of tempered distribution on $\mathbb{R}$ by%
\[
T_{z}(\phi)=\int_{-\infty}^{\infty}\tau_{z}(t)\phi(t)dt
\]
for $\operatorname{Re}z\geq0$. Dominated convergence implies that the map
$z\mapsto T_{z}$ is a weakly continuous map of $\{z|\operatorname{Re}z\geq0\}$
to $\mathcal{S}^{\prime}(\mathbb{R)}$ (the space of tempered distributions).
Thus if $\mathcal{F}$ is the usual Fourier transform
\[
\mathcal{F}(\phi)(s)=\frac{1}{\sqrt{2\pi}}\int_{-\infty}^{\infty}e^{-ist}%
\phi(t)dt
\]
which is a continuous linear isomorphism of $\mathcal{S}(\mathbb{R}).$ If $T$
is a tempered distribution we define (as usual) $\mathcal{F}(T)=T\circ
\mathcal{F}$. \ If $T$ is given by integration by an element of $L^{1}%
(\mathbb{R})$, i.e. $T(\phi)=\int_{-\infty}^{\infty}\tau(t)\phi(t)dt$ \ with
$\tau\in L^{1}(\mathbb{R})$, then%
\[
\mathcal{F}(T)(\phi)=\int_{-\infty}^{\infty}\mathcal{F(}\tau(t))\phi(t)dt
\]
If $\operatorname{Re}z>0$ then as we have seen above
\[
n_{1}\mapsto\int_{\mathfrak{a}^{\ast}}\int_{N}a(nn_{1}g)^{i\nu-(1+z)\rho
}a(n)^{-i\nu-(1+z)\rho}\overline{u(\nu)(k(n))}dn\mu(\nu)d\nu
\]
defines an element of $L^{1}(N_{1}).$ So Fubini's theorem implies that
\ $\tau_{z}\in L^{1}(\mathbb{R})$ if $\operatorname{Re}z>0$. If
$\operatorname{Re}z=0$ then%
\[
\tau_{0}(t)=\int_{\ker\chi}\int_{\mathfrak{a}^{\ast}}\left\langle \pi_{i\nu
}(\exp tx_{o}ng)1,\pi_{i\nu}(f)1\right\rangle \mu(\nu)d\nu dn=\int_{\ker\chi
}\overline{f(\exp tx_{o}ng)}dn.
\]
Since $f$, in particular, is in $\mathcal{C}(G/K)$ the function $n\mapsto
\overline{f(ng)}$ on $N$ is in $L^{1}(N)$. Thus if $z=0$ then $\tau_{z}\in
L^{1}(\mathbb{R})$. Let for $s,t\in\mathbb{R},$ $n\in\ker\chi,$ $\chi_{s}(\exp
tx_{o}n)=e^{its}$ \ Then $\chi_{1}=\chi$ and $\chi_{s}$ is generic for
$s\neq0$. Note that if $\operatorname{Re}z>0$ and $s\neq0$ then
\[
\mathcal{F}(\tau_{z})(s)=\int_{\mathfrak{a}^{\ast}}J_{\chi_{s}^{-1},i\nu
-z\rho}(\pi_{i\nu-z\rho}(g)1)\overline{J_{\chi_{s}^{-1},i\nu-\bar{z}\rho}%
(\pi_{i\nu}(f)1)}\mu(\nu)d\nu.
\]
Also if $z=0$ and $s\neq0$ then
\[
\mathcal{F}(\tau_{0})(s)=\overline{\int_{N}\chi_{s}(n)^{-1}f(ng)dn}.
\]
Define for $s\neq0$
\[
\sigma_{z}(s)=\int_{\mathfrak{a}^{\ast}}J_{\chi_{s}^{-1},i\nu-z\rho}(\pi
_{i\nu-z\rho}(g)1)\overline{J_{\chi_{s}^{-1},i\nu-\bar{z}\rho}(\pi_{i\nu
}(f)1)}\mu(\nu)d\nu
\]
then Proposition \ref{Key-Estimate} implies that $\sigma_{z}$ is continuous in
$z$ for $s\neq0$ and $\operatorname{Re}z\geq0,$ Furthermore, if
$\operatorname{Re}z>0$ and $s\neq0,$ then $\sigma_{z}(s)=\mathcal{F}(\tau
_{z})(s)$. We therefore have if $\phi$ has support in $\mathbb{R}-\{0\},$
\[
\lim_{%
\begin{array}
[c]{c}%
\operatorname{Re}z>0\\
z\rightarrow0
\end{array}
}\mathcal{F(\tau}_{z})(\phi)=\mathcal{F(\tau}_{0})(\phi)=\int_{-\infty
}^{\infty}\int_{N}\chi_{s}(n)f((ng)dn\phi(s)ds.
\]
Also%
\[
\lim_{%
\begin{array}
[c]{c}%
\operatorname{Re}z>0\\
z\rightarrow0
\end{array}
}\int_{-\infty}^{\infty}\int_{\mathfrak{a}^{\ast}}J_{\chi_{s}^{-1},i\nu-z\rho
}(\pi_{i\nu-z\rho}(g)1)\overline{J_{\chi_{s}^{-1},i\nu-\bar{z}\rho}(\pi_{i\nu
}(f)1)}\mu(\nu)d\nu\phi(s)ds=\int_{-\infty}^{\infty}\sigma_{0}(s)\phi(s)ds
\]%
\[
\int_{-\infty}^{\infty}\int_{\mathfrak{a}^{\ast}}J_{\chi_{s}^{-1},i\nu}%
(\pi_{i\nu}(g)1)\overline{J_{\chi^{-1},i\nu}(\pi_{i\nu}(f)1)}\mu(\nu)d\nu
\phi(s)ds
\]
This implies the theorem.
\end{proof}

Using the same methods as in the proof of the above theorem on can prove:

\begin{theorem}
Assume that $\chi$ is generic. Let $f$ be defined as in Theorem
\ref{HCVersion} then%
\[
\int_{N}\chi(n)^{-1}f(ng)dg=\int_{\mathfrak{a}^{\ast}}J_{\chi,i\nu}(\pi_{i\nu
}(g)1)\overline{J_{\chi,i\nu}(u)}\alpha(\nu)\mu(\nu)d\nu.
\]

\end{theorem}

\begin{corollary}
\label{Density}If $\beta\in C_{c}^{\infty}(\mathfrak{a}^{\ast})$ then
\[
\left(  g\mapsto\int_{\mathfrak{a}^{\ast}}J_{\chi,i\nu}(\pi_{i\nu}%
(g)1)\beta(\nu)\mu(\nu)d\nu\right)  \in\mathcal{C}(N\backslash G/K,\chi).
\]

\end{corollary}

\begin{proof}
Let $\omega$ be the support of $\beta$. If $\nu\in\omega$ let $u_{\nu}$ be a
$K$--finite element of $C^{\infty}(M\backslash K)$ such that $J_{i\nu}(u_{\nu
})\neq0$. Let $U_{\nu}$ be an open set with compact closure in $\mathfrak{a}%
^{\ast}$ such that $J_{i\lambda}(u_{\nu})\neq0$ for $\lambda\in U_{\nu}$. Then
the covering $U_{\nu}$ of $\omega$ has a finite refinement $U_{\nu_{1}%
},...,U_{\nu_{m}}.$ Let $\alpha_{1},...,\alpha_{r}$ be a partition of unity
subordinate to this covering of $\omega$. Then for each $i$ there exists
$w_{i}\in C^{\infty}(M\backslash K)$ that is $K$--finite and $W_{i}$ an open
set containing the support of $\alpha_{i}$ such that $J_{\chi,i\lambda}%
(w_{i})\neq0$ for $\lambda\in W_{i}.$ Define
\[
\varphi_{i}(\lambda)=\frac{\alpha_{i}(\lambda)\beta(\lambda)}{\overline
{J_{\chi,i\lambda}(w_{i})}}.
\]
Then $\varphi_{i}\in C_{c}^{\infty}(\mathfrak{a}^{\ast})$ and
\[
\beta(\lambda)=\sum_{i=1}^{r}\overline{J_{\chi,i\lambda}(w_{i})}\varphi
_{i}(\lambda).
\]
Now apply the previous theorem.
\end{proof}

\section{The spherical Whittaker Inversion Theorem and Plancherel Theorem.}

Let if $\chi$ is a generic character of $N$ set $J_{\chi,\nu}$ equal to the
corresponding Jacquet integral. We will need the following

\begin{lemma}
Assume $\chi$ is generic. Let $\psi\in\mathcal{C}(N_{o}\backslash G;\chi) $
and let $\varphi\in C_{c}^{\infty}(N)$ is such that
\[
\int_{N_{o}}\chi(n)^{-1}\varphi(n)dn=1.
\]
Set $f(nak)=\varphi(n)\psi(ak)$ for $n\in N_{o},a\in A_{o},k\in K$. Then
$f\in\mathcal{C}(G)$ and if $u\in C^{\infty}(M\backslash K)$ then
\[
J_{\chi^{-1},i\nu}(\pi_{i\nu}(f)u)=\int_{N\backslash G}J_{\chi^{-1},i\nu}%
(\pi_{i\nu}(g)u)\psi(g)dg
\]

\end{lemma}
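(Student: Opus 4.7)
The plan is to verify the Schwartz regularity of $f$ and then establish the integral identity via an interchange of integrations combined with the Whittaker equivariance.

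For $f\in\mathcal{C}(G)$: since $\varphi\in C_c^\infty(N)$ is compactly supported, on $\operatorname{supp} f$ the Iwasawa $N$-component stays in a fixed compact set. A Harish--Chandra seminorm $p_{x,y,d}(f)$ can then be bounded by a Whittaker seminorm of $\psi$. One checks that for $g=nak$ with $n$ in the compact set $\operatorname{supp}\varphi$ the quantities $|g|^{1/2}$ and $1+\log\|g\|$ are comparable (up to multiplicative constants depending on $\operatorname{supp}\varphi$) to $a_o(g)^{\rho}$ and $1+\|\log a_o(g)\|$ respectively, and that any $R_xL_y$ derivative of $f=\varphi\otimes\psi$, written in Iwasawa coordinates via the chain rule, is a finite sum of products of $R$-derivatives of $\varphi$ against $R$-derivatives of $\psi$ evaluated at shifted arguments. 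Using $q_{x,d}(\psi)<\infty$ for every $x,d$ together with compactness of $\operatorname{supp}\varphi$ then yields $p_{x,y,d}(f)<\infty$.

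For the integral identity, I start from the formal computation
\[
J_{\chi^{-1},i\nu}(\pi_{i\nu}(f)u) = \int_G f(g)\, J_{\chi^{-1},i\nu}(\pi_{i\nu}(g)u)\, dg,
\]
obtained by interchanging the Jacquet integral with the defining integral of $\pi_{i\nu}(f)u$. This exchange is legal for $\nu\in\mathfrak{a}_{\mathbb{C}}^{\ast}$ with $\operatorname{Re}(\nu,\alpha)<0$ on $\Phi^+$: there the Jacquet integral converges absolutely with the bound of the first lemma above, and Schwartz decay of $f$ (together with the polynomial bound on $\mu(\nu)$) justifies Fubini. Both sides are weakly holomorphic in $\nu$ on a tube containing $i\mathfrak{a}^{\ast}$ (the left by the holomorphic-continuation theorem of Section~4 applied to $\pi_{i\nu}(f)u$, the right by differentiation under the $G$-integral), so the identity propagates by analytic continuation to $\nu\in\mathfrak{a}^{\ast}$.

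Now apply the Iwasawa decomposition $G=NAK$ with $g=nak$ and the associated Jacobian, together with the Whittaker equivariance
\[
J_{\chi^{-1},i\nu}\!\bigl(\pi_{i\nu}(n_{0}g)u\bigr) = \chi(n_{0})^{-1}\, J_{\chi^{-1},i\nu}\!\bigl(\pi_{i\nu}(g)u\bigr),
\]
and the factorization $f(nak)=\varphi(n)\psi(ak)$. The $n$-integration decouples and produces a factor $\int_N \chi(n)^{-1}\varphi(n)\,dn=1$ by hypothesis. The remaining integral over $AK$ weighted by the Iwasawa Jacobian is precisely $\int_{N\backslash G}\psi(g)\,J_{\chi^{-1},i\nu}(\pi_{i\nu}(g)u)\,dg$, because the integrand $\psi(g)\,J_{\chi^{-1},i\nu}(\pi_{i\nu}(g)u)$ is left $N$-invariant: the $\chi$-equivariance of $\psi$ cancels the $\chi^{-1}$-equivariance of the Jacquet functional. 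This is the desired identity. The principal obstacle is the interchange in the second paragraph, since at $\nu\in\mathfrak{a}^{\ast}$ the Jacquet integral exists only by analytic continuation; the deformation into $\operatorname{Re}(\nu,\alpha)<0$, Fubini there, and continuation back is the delicate step, for which the uniform polynomial bounds of Proposition~\ref{tempered} and Proposition~\ref{Key-Estimate} together with the Schwartz decay of $f$ are exactly what is needed.
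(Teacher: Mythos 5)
There is a genuine gap in your first step, the claim that $f\in\mathcal{C}(G)$. The asserted comparability of $|g|^{1/2}$ with $a_{o}(g)^{\rho}$ for $g=nak$ with $n$ in a fixed compact set is false: for such $g$ one has $|g|^{1/2}\asymp|a|^{1/2}=a^{s^{-1}\rho}$, where $s$ is the Weyl element making $\log a$ dominant, so for instance $|a|^{1/2}=a^{-\rho}$ on the anti-dominant chamber, and the ratio $a^{\rho}/|a|^{1/2}$ is unbounded. More seriously, the defining seminorms $q_{x,d}$ of $\mathcal{C}(N\backslash G;\chi)$ only give $|R_{x}\psi(ak)|\lesssim a^{\rho}(1+\Vert\log a\Vert)^{-d}$, whereas on the dominant chamber (the one place where your comparability does hold) membership of $f$ in $\mathcal{C}(G)$ demands $|f(nak)|\lesssim a^{-\rho}(1+\Vert\log a\Vert)^{-d}$; the discrepancy $a^{2\rho}$ cannot be closed from the seminorms alone, and your sketch never uses genericity of $\chi$ at this step. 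The paper closes exactly this gap in Appendix 2: Lemma \ref{basic} integrates by parts against simple root vectors $X_{i}$ with $d\chi(X_{i})\neq0$ to show that every element of $\mathcal{C}(N\backslash G;\chi)$ in fact decays like $a^{\rho}e^{-\sum m_{i}\alpha_{i}(\log a)}$ for every $m$, and Lemma \ref{extension} then compares with $|a|^{-1/2}=a^{\rho-\sum u_{i}\alpha_{i}}$ chamber by chamber (and handles the left derivatives, which produce exponentially growing coefficients $a^{2\beta_{i}}$ and therefore again require the rapid decay). This is the content of Corollary \ref{ExtensionProp}, which is what the lemma's proof actually cites; without that input your argument for $f\in\mathcal{C}(G)$ does not go through.

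For the integral identity your route also differs from the paper's and is incomplete at the step you yourself flag. The paper works directly at the unitary parameter: it writes $\int_{N\backslash G}$ as an integral over $A\times K$ with Jacobian $a^{-2\rho}$, inserts $\int_{N}\chi(n)^{-1}\varphi(n)dn=1$, uses the $\chi^{-1}$-equivariance of $J_{\chi^{-1},i\nu}$ to reassemble $\int_{G}J_{\chi^{-1},i\nu}(\pi_{i\nu}(g)u)f(g)dg$, and then pulls the functional through the vector-valued integral using its continuity on $C^{\infty}(M\backslash K)$ together with the tame estimate $|J_{\chi^{-1},i\nu}(\pi_{i\nu}(g)u)|\leq C|g|^{-1/2}(1+\log\Vert g\Vert)^{d}$ (Theorem 15.2.5 of the reference \cite{RRGII}); no deformation of $\nu$ is needed. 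Your plan of doing Fubini in the region $\operatorname{Re}(\nu,\alpha)<0$ and continuing back to $i\mathfrak{a}^{\ast}$ could in principle work, but the tools you cite do not supply the required domination up to the axis: Proposition \ref{tempered} is stated only for $\operatorname{Re}(\nu,\alpha)<0$ strictly, and Proposition \ref{Key-Estimate} concerns the specific vector $\pi_{i\nu}(f)1$ along the $\rho$-direction deformation, not $J_{\chi^{-1},\nu}(\pi_{\nu}(g)u)$ as a function of $g$. To take the limit in $\int_{G}f(g)J_{\chi^{-1},\nu}(\pi_{\nu}(g)u)dg$ you would still need a bound uniform in $\nu$ near the axis with controlled growth in $g$, which is precisely what the tame estimate provides; invoking it directly, as the paper does, makes the contour shift unnecessary.
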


\begin{proof}
Appendix 2, Corollary \ref{ExtensionProp} proves that $f\in\mathcal{C}(G)$. We
calculate%
\[
\int_{N\backslash G}J_{\chi^{-1},i\nu}(\pi_{i\nu}(g)u)\psi(g)dg==\int_{A\times
K}a^{-2\rho}J_{\chi^{-1},i\nu}(\pi_{i\nu}(ak)u)\psi(ak)dadk
\]%
\[
=\int_{N}\chi(n)^{-1}\int_{A\times K}a^{-2\rho}J_{\chi^{-1},i\nu}(\pi_{i\nu
}(ak)u)\varphi(n)\psi(ak)dadkdn
\]%
\[
=\int_{N\times A\times K}a^{-2\rho}J_{\chi^{-1},i\nu}(\pi_{i\nu}%
(nak)u)\varphi(n)\psi(ak)dadkda=\int_{G}J_{\chi^{-1},i\nu}(\pi_{i\nu
}(g)u)f(g)dg.
\]

For each $\nu\in\mathfrak{a}^{\ast}$ $J_{\chi^{-1},i\nu}$ is a continuous
functional on $C^{\infty}(M\backslash K)$. Also since $J_{\chi,i\nu}$ is tame
in the sense of Theorem 15.2.5 in \cite{RRGII}, for each $\nu$ in
$\mathfrak{a}^{\ast}$ there exist $C$ and $d$ such that if $u\in C^{\infty
}(M\backslash K)$ then%
\[
\left\vert J_{\chi^{-1}.i\nu}(\pi_{i\nu}(g)1)\right\vert \leq C\left\vert
g\right\vert ^{-\frac{1}{2}}(1+\log\left\Vert g\right\Vert )^{d}.
\]
This implies that%
\[
\int_{G}J_{\chi^{-1},i\nu}(\pi_{i\nu}(g)u)f(g)dg=J_{\chi^{-1},i\nu}(\int
_{G}f(g)\pi_{i\nu}(g)dg)=J_{\chi^{-1},i\nu}(\pi_{i\nu}(f)u).
\]

\end{proof}

\begin{theorem}
Let $\psi\in\mathcal{C}(N\backslash G/K;\chi)$. Set
\[
W_{\chi}(\nu,\psi)=\int_{N\backslash G}\overline{J_{i\nu}(\pi_{i\nu}(g)1)}%
\psi(g)dg
\]
then%
\[
\psi(g)=\int W_{\chi}(\nu,\psi)J_{i\nu}(\pi_{i\nu}(g)1)\mu(\nu)d\nu.
\]

\end{theorem}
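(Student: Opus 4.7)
The strategy is to reduce this inversion formula to Theorem \ref{KeyFormula} applied to an auxiliary function $f \in \mathcal{C}(G/K)$ associated with $\psi$, and then use the preceding Lemma to recognize the spectral integrand as $W_\chi(\nu,\psi)$.

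Fix $\varphi \in C_c^\infty(N)$ with $\int_N \chi(n)^{-1}\varphi(n)\,dn = 1$ and set $f(nak) = \varphi(n)\psi(ak)$ for $n \in N$, $a \in A$, $k \in K$. Since $\psi$ is right $K$-invariant, so is $f$; the preceding Lemma (via Corollary \ref{ExtensionProp}) then gives $f \in \mathcal{C}(G/K)$. Using $\psi(n'g) = \chi(n')\psi(g)$ together with the normalization of $\varphi$, a direct change of variables yields
\[
\int_N \chi(n)\overline{f(ng)}\,dn \;=\; \overline{\psi(g)}.
\]

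Applying Theorem \ref{KeyFormula} to this $f$ and then using the preceding Lemma's identity $J_{\chi^{-1},i\nu}(\pi_{i\nu}(f)1) = \int_{N\backslash G}J_{\chi^{-1},i\nu}(\pi_{i\nu}(g')1)\psi(g')\,dg'$, followed by complex conjugation, gives
\[
\psi(g) \;=\; \int_{\mathfrak{a}^*}\overline{J_{\chi^{-1},i\nu}(\pi_{i\nu}(g)1)}\left[\int_{N\backslash G}J_{\chi^{-1},i\nu}(\pi_{i\nu}(g')1)\psi(g')\,dg'\right]\mu(\nu)\,d\nu.
\]
To match the statement (where $J_{i\nu} := J_{\chi,i\nu}$), I invoke the elementary conjugation identity
\[
\overline{J_{\chi,i\nu}(\pi_{i\nu}(h)1)} \;=\; J_{\chi^{-1},-i\nu}(\pi_{-i\nu}(h)1),
\]
which is checked by direct computation on the domain of absolute convergence of the defining Jacquet integral and extends to all $\nu \in \mathfrak{a}^*$ by the holomorphic continuation of Jacquet integrals. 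Substituting $\nu \mapsto -\nu$ in the outer integral (using $\mu(-\nu) = \mu(\nu)$) converts both $\chi^{-1}$-Jacquet factors into $\chi$-Jacquet factors and produces exactly
\[
\psi(g) \;=\; \int_{\mathfrak{a}^*} J_{i\nu}(\pi_{i\nu}(g)1)\,W_\chi(\nu)\,\mu(\nu)\,d\nu.
\]

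The main analytic point is to justify absolute convergence of all the integrals so that the substitutions above are legitimate. This is controlled by the tameness estimate from \cite{RRGII}~15.2.5 (polynomial growth in $\nu$ of $J_{\chi^{\pm1},i\nu}(\pi_{i\nu}(g)1)$ for fixed $g$), by Proposition \ref{Key-Estimate} (rapid decay in $\nu$ of $J_{\chi^{-1},i\nu}(\pi_{i\nu}(f)1)$ for $f \in \mathcal{C}(G/K)$), by the Schwartz decay of $\psi \in \mathcal{C}(N\backslash G/K;\chi)$, and by the polynomial bound $\mu(\nu) = O((1+\|\nu\|)^r)$; together these guarantee that $W_\chi(\nu,\psi)$ is well-defined and that the outer spectral integral converges absolutely for each $g$.
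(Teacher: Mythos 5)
Your proof is correct and follows essentially the same route as the paper: the same auxiliary function $f(nak)=\varphi(n)\psi(ak)$, the identity $\int_N\chi(n)\overline{f(ng)}\,dn=\overline{\psi(g)}$, Theorem \ref{KeyFormula}, the preceding lemma's identification $J_{\chi^{-1},i\nu}(\pi_{i\nu}(f)1)=\int_{N\backslash G}J_{\chi^{-1},i\nu}(\pi_{i\nu}(g')1)\psi(g')\,dg'$, and the conjugation identity together with the substitution $\nu\mapsto-\nu$ using $\mu(\nu)=\mu(-\nu)$. The only cosmetic difference is that you carry out the conjugation and change of variable at the end rather than identifying $W_{\chi}(\nu,\psi)=J_{\chi^{-1},-i\nu}(\pi_{-i\nu}(f)1)$ at the outset, which is equivalent.
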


\begin{proof}
Let $f$ be as in the preceding lemma for $\psi$. We observe that
\[
\overline{J_{i\nu}(\pi_{i\nu}(g)1)}=J_{\chi^{-1},-i\nu}(\pi_{-i\nu}(g)1).
\]
Thus the preceding lemma implies that%
\[
W_{\chi}(\nu,\psi)=J_{\chi^{-1},-i\nu}(\pi_{i\nu}(f)1).
\]
The spherical Plancherel Theorem implies that%
\[
f(g)=\int_{\mathfrak{a}^{\ast}}\left\langle \pi_{i\nu}(L_{g^{-1}%
}f)1,1\right\rangle \mu(\nu)dv=\int_{\mathfrak{a}^{\ast}}\left\langle
\pi_{i\nu}(f)1,\pi_{i\nu}(g)1\right\rangle \mu(\nu)dv
\]
so
\[
\bar{f}(g)=\overline{f(g)}=\int_{\mathfrak{a}^{\ast}}\left\langle \pi_{i\nu
}(g)1,\pi_{i\nu}(f)1\right\rangle \mu(\nu)dv.
\]
Theorem \ref{KeyFormula} implies that%
\[
\bar{f}_{\chi^{-1}}(g)=\int_{\mathfrak{a}^{\ast}}J_{\chi^{-1},i\nu}(\pi_{i\nu
}(g)1)\overline{J_{\chi^{-1},i\nu}(\pi_{i\nu}(f)1)}\mu(\nu)dv,
\]
Thus%
\[
\psi(g)=f_{\chi}(g)=\int_{\mathfrak{a}^{\ast}}\overline{J_{\chi^{-1},i\nu}%
(\pi_{i\nu}(g)1)}J_{\chi^{-1},i\nu}(\pi_{i\nu}(f)1)\mu(\nu)dv.
\]
The first part of this proof implies that%
\[
J_{\chi^{-1},i\nu}(\pi_{i\nu}(f)1)=W_{\chi}(-\nu,\psi).
\]
Also,%
\[
\overline{J_{\chi^{-1},i\nu}(\pi_{i\nu}(g)1)}=J_{-i\nu}(\pi_{-i\nu}(g)1).
\]
Hence%
\[
\psi(g)=\int_{\mathfrak{a}^{\ast}}W_{\chi}(-\nu,\psi)J_{-i\nu}(\pi_{-\nu
}(g)1)\mu(\nu)dv.
\]
This proves the theorem since $\mu(\nu)=\mu(-\nu)$.
\end{proof}

\begin{corollary}
With the notation as in the previous theorem, if $f,h\in\mathcal{C}%
(N\backslash G/K;\chi)$ then%
\[
\int_{N\backslash G}f(g)\overline{h(g)}dg=\int_{\mathfrak{a}^{\ast}}W_{\chi
}(\nu,f)\overline{W_{\chi}(\nu,h)}\mu(\nu)d\nu.
\]

\end{corollary}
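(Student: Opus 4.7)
The plan is to derive the Parseval-type identity directly from the inversion theorem just proved, by substituting the expansion of $f$ into the left hand side and then interchanging the order of integration. Writing the inversion formula for $f \in \mathcal{C}(N\backslash G/K;\chi)$ as
\[
f(g) = \int_{\mathfrak{a}^{\ast}} W_{\chi}(\nu,f)\, J_{i\nu}(\pi_{i\nu}(g)1)\,\mu(\nu)\,d\nu,
\]
I would multiply by $\overline{h(g)}$ and integrate over $N\backslash G$. Formally swapping the $dg$ and $d\nu$ integrations, the inner $g$-integral becomes
\[
\int_{N\backslash G} J_{i\nu}(\pi_{i\nu}(g)1)\,\overline{h(g)}\,dg
\;=\; \overline{\int_{N\backslash G} \overline{J_{i\nu}(\pi_{i\nu}(g)1)}\, h(g)\,dg}
\;=\; \overline{W_{\chi}(\nu,h)},
\]
and the identity follows (carrying the factor $\mu(\nu)$ that appears in the inversion formula).

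The main technical issue, and the only serious obstacle, is justifying Fubini. For this I would establish absolute integrability of
\[
\int_{N\backslash G} \int_{\mathfrak{a}^{\ast}} |W_{\chi}(\nu,f)|\; |J_{i\nu}(\pi_{i\nu}(g)1)|\; |h(g)|\; \mu(\nu)\,d\nu\,dg.
\]
The three ingredients I would combine are: (i) the fact that $W_{\chi}(\cdot,f)$ is rapidly decreasing on $\mathfrak{a}^{\ast}$, which follows by realizing $W_{\chi}(\nu,f) = J_{\chi^{-1},-i\nu}(\pi_{i\nu}(F)1)$ for a Schwartz extension $F \in \mathcal{C}(G/K)$ of $f$ (via the cutoff construction of the lemma preceding the inversion theorem) and then invoking Proposition \ref{Key-Estimate}; (ii) the tameness estimate $|J_{\chi^{-1},i\nu}(\pi_{i\nu}(g)1)| \leq C(\nu)|g|^{-1/2}(1+\log\|g\|)^{d}$ with $C(\nu)$ of polynomial growth, which follows from Proposition \ref{tempered}; and (iii) the fact that $h \in \mathcal{C}(N\backslash G/K;\chi)$, together with the polynomial bound on $\mu(\nu)$ and the existence of $d$ with $\int_G |g|^{-1}(1+\log\|g\|)^{-d}dg < \infty$ recalled in Section 2, which controls the $g$-integral after passing to an Iwasawa decomposition $G = NAK$.

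Once Fubini is justified, the computation is mechanical: substitute, interchange integrals, and identify the inner integral as the conjugate transform of $h$. I would then remark that the identity extends by polarization from the diagonal $f = h$ case if one prefers to prove it first as a Plancherel norm identity, but the direct approach above delivers the sesquilinear form at once. The hard part is really step (i): showing that the Whittaker transform of a Schwartz function is itself rapidly decreasing. This is where Proposition \ref{Key-Estimate} is essential, and it is precisely the place where the Beuzart-Plessis estimate (Theorem \ref{Raphael}) ultimately feeds in, since that is what makes the defining integrals for $J_{\chi,\nu}$ behave well enough under the passage $z \to 0$ used throughout this section.
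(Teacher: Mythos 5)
Your proposal is correct and follows essentially the same route as the paper: substitute the inversion formula for $f$, interchange the $dg$ and $d\nu$ integrations, and recognize the inner $g$-integral as $\overline{W_{\chi}(\nu,h)}$; your extra care in justifying Fubini (rapid decay of $W_{\chi}(\cdot,f)$ via the Schwartz extension and Proposition \ref{Key-Estimate}, together with the tame estimate in $g$) only makes explicit what the paper leaves implicit. Note also that you are right to carry the factor $\mu(\nu)$ from the inversion formula: the plancherel-type identity should read $\int_{N\backslash G}f(g)\overline{h(g)}\,dg=\int_{\mathfrak{a}^{\ast}}W_{\chi}(\nu,f)\overline{W_{\chi}(\nu,h)}\,\mu(\nu)\,d\nu$, the omission of $\mu(\nu)$ in the stated corollary being a typographical slip, as the corresponding Toda-lattice statement later in the paper confirms.
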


\begin{proof}
We calculate. The previous theorem implies that%
\[
\int_{N\backslash G}f(g)\overline{h(g)}dg=\int_{N\backslash G}\int
_{\mathfrak{a}^{\ast}}W_{\chi}(\nu,f)J_{\chi,i\nu}(\pi_{i\nu}(g)1)\mu(\nu
)d\nu\overline{h(g)}dg
\]%
\[
=\int_{\mathfrak{a}^{\ast}}W_{\chi}(\nu,f)\overline{\int_{N\backslash
G}h(g)\overline{J_{\chi,i\nu}(\pi_{i\nu}(g)1)}dg}\mu(\nu)d\nu=\int
_{\mathfrak{a}^{\ast}}W_{\chi}(\nu,f)\overline{W_{\chi}(\nu,h)}\mu(\nu)d\nu.
\]

\end{proof}

The above result implies that the map
\[
W_{\chi}:\mathcal{C}(N\backslash G/K;\chi)\rightarrow L^{2}(\mathfrak{a}%
^{\ast},\mu(\nu)d\nu)
\]
extends to a continuous inner product preserving map of $L^{2}(N\backslash
G/K,\chi)$ to $L^{2}(\mathfrak{a}^{\ast},\mu(\nu)d\nu)$.

\section{\label{TodaChapter}The non-periodic Toda Lattice}

The original non-periodic Toda Lattice is the Hamiltonian system with
Hamiltonian%
\[
H(p,q)=\frac{1}{2}\sum_{i=1}^{n}p_{i}^{2}+\sum_{i=1}^{n-1}c_{i}^{2}%
e^{2(q_{i}-q_{i+1})}%
\]
$c_{i}\in\mathbb{R}-\{0\}$. Using the quantization rules (here Planck's
constant is normalized) $p_{j}\rightarrow i\frac{\partial}{\partial q_{j}}$
and $f(q)\rightarrow m_{f(q)}$ with $m_{f(q)}$ the operator on, $C^{\infty
}(\mathbb{R}^{n})$, given by multiplication by $f(q)$. Thus the quantum
Hamiltonian is%
\[
\mathcal{H=}-\frac{1}{2}\Delta_{q}+\sum_{i=1}^{n-1}c_{i}^{2}e^{2(q_{i}%
-q_{i+1})}%
\]
where
\[
\Delta_{q}=\sum_{i=1}^{n}\frac{\partial^{2}}{\partial q_{j}^{2}}.
\]

Consider the algebra, $\mathcal{A}$, of linear differential operators on
$\mathbb{R}^{n}$ with coefficients in the algebra generated by $e^{(q_{i}%
-q_{i+1})},i=1,...,n-1$. We take as a domain for this algebra the \ space,
$\mathcal{T}$, of $f\in C^{\infty}(\mathbb{R}^{n})$ such that
\[
t_{m,d,x}(f)=\sup_{q\in\mathbb{R}^{n}}e^{\sum_{j=1}^{n-1}m_{i}(q_{i}-q_{i+1}%
)}(1+\left\Vert q\right\Vert )^{d}|xf(q)|<\infty
\]
with $m=(m_{1},...,m_{n}),m_{i}$ and $d\in\mathbb{Z}_{\geq0}$ and $x$ is a
constant coefficient differential operator on $\mathbb{R}^{n}$ and endowed
with the topology induced by these semi-norms. This space is invariant under
$\mathcal{A}$ and $\mathcal{A}$ acts continuously on it. In \cite{Goodman-W2}
Toda 1 Section 2 it was shown the centralizer of $\mathcal{H}$ in
$\mathcal{W}$ is an algebra generated over $\mathbb{C}$ by $n$ elements
$D_{1}=\sum_{i=1}^{n}\frac{\partial}{\partial q_{j}},D_{2}=\mathcal{H}%
,...,D_{n}$ with algebraically independent symbols, $\sigma(D_{1}%
),...,\sigma(D_{n})$ generators for the $S_{n}$ invariant constant coefficient
differential operators. A solution to the quantum Toda lattice is thus a
family $K_{\nu}(q)$, $\nu\in\left(  \mathbb{R}^{n}\right)  ^{\ast}$ such that
\[
\int_{\left(  \mathbb{R}^{n}\right)  ^{\ast}}|K_{\nu}(q)f(q)|dq<\infty
,f\in\mathcal{T}%
\]
and
\[
D_{j}K_{\nu}=\sigma_{i}(\nu)K_{\nu},j=1,...,n.
\]
One has the following inversion formula: There exists a non-negative function
$\gamma(\nu)$ on $\left(  \mathbb{R}^{n}\right)  ^{\ast}$ such that
$\left\vert \gamma(\nu)\right\vert \leq C(1+\left\Vert \nu\right\Vert )^{r}$
for $\nu\in\left(  \mathbb{R}^{n}\right)  ^{\ast}$ and such that if
$f\in\mathcal{T}$ \ and if%
\[
\mathcal{K}(f)(\nu)=\int_{\mathbb{R}^{n}}f(q)\overline{K_{\nu}(q)}dq
\]
then%
\[
f(q)=\int_{\left(  \mathbb{R}^{n}\right)  ^{\ast}}\mathcal{K}(f)(\nu)K_{\nu
}(q)\gamma(\nu)d\nu.
\]
and if $f_{1},f_{2}\in\mathcal{T}$ then%
\[
\int_{\mathbb{R}^{n}}f_{1}(x)\overline{f_{2}(x)}dx=\int_{\left(
\mathbb{R}^{n}\right)  ^{\ast}}\mathcal{K}(f_{1})(\nu)\overline{\mathcal{K}%
(f_{2})(\nu)}\gamma(\nu)d\nu
\]

We now return to the situation of the preceding sections. Let $G$ and the
notation be as in Section 2 so we assume that, in particular, $G\subset
GL(n,\mathbb{R})$ for some $n$. In particular, $\mathfrak{g=}Lie(G)$. Let $C$
be the Casimir operator corresponding to the invariant form
\[
B(X,Y)=\mathrm{tr}XY
\]
for $X,Y$ in $\mathfrak{g}$. That is, if $X_{1},...,X_{m}$ is a basis of
$\mathfrak{g}$ and $Y_{1},...,Y_{n}$ are defined by $B(X_{i},Y_{j}%
)=\delta_{ij}$ then%
\[
C=\sum X_{i}Y_{i}\in U(\mathfrak{g}).
\]
Let $\theta,N,A,K,\mathfrak{n},\mathfrak{a},\mathfrak{k}$,$\Phi^{+},\Delta$ be
as before. Then $\mathfrak{n=}\sum_{\alpha\in\Phi^{+}}\mathfrak{n}_{\alpha}$
and let $X_{\alpha,j}$,$j=1,...,m_{\alpha}$ be an orthonormal basis of
$\mathfrak{n}_{\alpha}$ relative to the inner product
\[
\left\langle X,Y\right\rangle =-B(X,\theta Y).
\]
We define the generalized quantum non-periodic Toda Lattices associated with
$G$ to be the operator on $C^{\infty}(A)$ given by%
\[
L_{c}=-\frac{\sum h_{i}^{2}}{2}+\sum_{\alpha\in\Delta}c_{\alpha}^{2}%
a^{2\alpha}%
\]
with $c_{\alpha}\in\mathbb{R}-\{0\}$. For $G=GL(n,\mathbb{R})$ then take $A$
to be the group of diagonal $n\times n$ matrices with positive coefficients
then and $N$ the group of upper triangular $n\times n$ matrices with ones on
the main diagonal. Then identifying $\mathfrak{a}$ with $\mathbb{R}^{n}$ via
the map%
\[
(x_{1},...,x_{n})\mapsto\left[
\begin{array}
[c]{cccc}%
x_{1} & 0 & \cdots & 0\\
0 & x_{2} & \cdots & 0\\
0 & 0 & \ddots & 0\\
0 & 0 & \cdots & x_{n}%
\end{array}
\right]
\]
we have $\Delta=\{\alpha_{1},...,\alpha_{n-1}\}$ with $\alpha_{i}%
(x)=x_{i}-x_{i-1}$. Thus%
\[
L_{c}=-\frac{1}{2}\sum_{i=1}^{n}\frac{\partial^{2}}{\partial x_{j}^{2}}%
+\sum_{i=1}^{n-1}c_{\alpha_{i}}^{2}e^{2(x_{i}-x_{i+1})}%
\]

Set $K_{\nu}(x)=a^{-\rho}J_{i\nu}(\pi_{\nu}(\exp x)1)$ for $x\in\mathfrak{a}.$
Let $(...,...)$ also denote the complex bilinear extension of the dual form,
$(...,...)$ of $B_{|\mathfrak{a}}$ to $\mathfrak{a}_{\mathbb{C}}^{\ast}$.

\begin{proposition}
Let $\chi$ be a generic character of $N$. Set for $\alpha\in\Delta$
\[
c_{\alpha}^{2}=-\sum_{j=1}^{m_{\alpha}}\left(  d\chi(X_{\alpha,j})\right)
^{2}>0
\]
Let If $\nu\in\mathfrak{a}^{\ast}$ then%
\[
L_{c}a^{-\rho}K_{i\nu}(a)=\frac{\left\Vert \nu\right\Vert ^{2}}{2}a^{-\rho
}K_{i\nu}(a).
\]

\end{proposition}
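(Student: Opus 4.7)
The plan is to exploit that $W(g):=J_{\chi,i\nu}(\pi_{i\nu}(g)1)$ is simultaneously left-$(N,\chi)$-covariant, right-$K$-invariant, and an eigenfunction of the Casimir $C$, and then to extract the Toda equation from the restriction of $W$ to $A$.

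Right-$K$-invariance of $W$ comes from $\pi_{i\nu}(k)1=1$ (the spherical vector, since $a(k'k)=1$), while the left-$(N,\chi)$-covariance is the defining property of the Jacquet functional. Because the principal series $\pi_{i\nu}$ has Harish-Chandra infinitesimal character parametrised by $i\nu$, the Casimir of $B$ acts by the scalar $(i\nu,i\nu)-(\rho,\rho)=-\|\nu\|^2-\|\rho\|^2$, so $CW=-(\|\nu\|^2+\|\rho\|^2)W$. Set $\phi:=W|_A$; the goal is to convert this into a Toda eigenvalue statement on $\psi:=a^{-\rho}\phi$.

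Next I would compute the radial part of $C$ on such $W$. Using the basis of $\mathfrak{g}$ consisting of a $B$-orthonormal basis $\{h_i\}$ of $\mathfrak{a}$, a $-B$-orthonormal basis $\{M_k\}$ of $\mathfrak{m}$, and the root vectors $X_{\alpha,j}$ together with their $B$-duals $\bar X_{\alpha,j}:=-\theta X_{\alpha,j}\in\mathfrak{n}_{-\alpha}$, one has
\[
C \;=\; \sum_i h_i^2 \;-\; \sum_k M_k^2 \;+\; \sum_{\alpha>0,\,j}\bigl(X_{\alpha,j}\bar X_{\alpha,j}+\bar X_{\alpha,j}X_{\alpha,j}\bigr).
\]
Right-$K$-invariance of $W$ kills $R_{M_k^2}W$, and since $X_{\alpha,j}+\theta X_{\alpha,j}\in\mathfrak{k}$ gives $R_{X+\theta X}W=0$, one obtains the pointwise identity $R_{\bar X_{\alpha,j}}W=R_{X_{\alpha,j}}W$; hence $R_{X\bar X}W=R_X(R_{\bar X}W)=R_{X^2}W$. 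Combined with $\bar X X=X\bar X-[X,\bar X]$, the congruence $[X_{\alpha,j},\bar X_{\alpha,j}]\equiv H_\alpha\pmod{\mathfrak{m}}$, and $\sum_{\alpha>0} m_\alpha H_\alpha=2H_\rho$, summation over $\alpha$ and $j$ yields
\[
\sum R_{X\bar X+\bar X X}W \;=\; 2\sum_{\alpha>0,\,j} R_{X_{\alpha,j}^2}W \;-\; 2R_{H_\rho}W.
\]
To evaluate $R_{X_{\alpha,j}^2}W(a)$: since $a\exp tX_{\alpha,j}=\exp(ta^{\alpha}X_{\alpha,j})\cdot a$ stays inside $NA$, left-$\chi$-covariance gives $R_{X_{\alpha,j}}W(a)=a^{\alpha}d\chi(X_{\alpha,j})\phi(a)$ (vanishing unless $\alpha\in\Delta$, since $d\chi$ kills $[\mathfrak{n},\mathfrak{n}]$); iterating along the same curve produces $R_{X_{\alpha,j}^2}W(a)=a^{2\alpha}(d\chi(X_{\alpha,j}))^2\phi(a)$. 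Using the definition $c_\alpha^2=-\sum_j(d\chi(X_{\alpha,j}))^2$ gives the radial eigenvalue equation
\[
\Bigl[\sum_i h_i^2 \;-\; 2H_\rho \;-\; 2\sum_{\alpha\in\Delta}c_\alpha^2\,a^{2\alpha}\Bigr]\phi(a) \;=\; -(\|\nu\|^2+\|\rho\|^2)\,\phi(a).
\]

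The last step is to conjugate by $a^\rho$. From $a^{-\rho}\circ H\circ a^\rho=H+\rho(H)$ for $H\in\mathfrak{a}$ one obtains $a^{-\rho}(\sum h_i^2)a^\rho=\sum h_i^2+2H_\rho+\|\rho\|^2$ and $a^{-\rho}H_\rho a^\rho=H_\rho+\|\rho\|^2$; in the conjugated equation the $2H_\rho$ terms cancel and the $\|\rho\|^2$ constants combine to cancel against $-\|\rho\|^2$ on the right, leaving $[\sum h_i^2-2\sum_{\alpha\in\Delta}c_\alpha^2 a^{2\alpha}]\psi=-\|\nu\|^2\psi$, which on division by $-2$ is exactly $L_c\psi=\tfrac12\|\nu\|^2\psi$. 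The main obstacle is the radial reduction in the second step: the identity $R_{\bar X}W=R_XW$ is available only because $W$ itself is right-$K$-invariant, so it cannot be iterated naively on $R_XW$, and the reduction of $R_{\bar X X}W$ must be routed through $R_{X\bar X}W$ and the bracket $[X,\bar X]$; a secondary technical point is the sign in $[X_{\alpha,j},\bar X_{\alpha,j}]_\mathfrak{a}=H_\alpha$, which follows from $B(X_{\alpha,j},\theta X_{\alpha,j})=-\langle X_{\alpha,j},X_{\alpha,j}\rangle=-1$ combined with the choice $\bar X=-\theta X$.
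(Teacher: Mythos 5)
Your proof is correct and follows essentially the same route as the paper: you use the Casimir eigenvalue $((i\nu,i\nu)-(\rho,\rho))$ transported through the Jacquet functional, compute the Whittaker radial component of $C$ on left-$(N,\chi)$-covariant, right-$K$-invariant functions (this is exactly the computation of Appendix 3, including the reduction via $X+\theta X\in\mathfrak{k}$ and $\sum m_\alpha H_\alpha=2H_\rho$), and then conjugate by $a^{\rho}$ to obtain $L_c\psi=\tfrac12\Vert\nu\Vert^{2}\psi$. Your handling of the bracket term and the sign $[X_{\alpha,j},-\theta X_{\alpha,j}]_{\mathfrak{a}}=H_\alpha$ is careful and consistent with the paper's normalization $B(X_{\alpha,j},\theta X_{\alpha,j})=-1$.
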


\begin{proof}
We have for $\mu\in\mathfrak{a}_{\mathbb{C}}^{\ast}$
\[
CJ_{\nu}(\pi_{\nu}(g)1)=J_{\nu}(\pi_{\nu}(g)d\pi_{\nu}(C)1)=((\nu,\nu
)-(\rho,\rho))J_{\nu}(\pi_{\nu}(g)1).\text{ }%
\]
The proposition now follows directly from the calculations in Appendix 3.
\end{proof}

We note that if $\mathcal{W}(\mathfrak{a})$ is as in Appendix 2 and if
$\phi\in\mathcal{W}(\mathfrak{a})$ then the function $a^{-\rho}\phi$ is in the
space $\mathcal{T}(\mathfrak{a})$ defined as follows: Define for $u\in
C^{\infty}(\mathfrak{a})$ the semi-norm%
\[
t_{d,m,x}(u)=\sup_{x\in\mathfrak{a}}e^{\sum_{\alpha\in\Delta}m_{\alpha}%
\alpha(h)}(1+\left\Vert h\right\Vert )^{d}\left\vert x\phi(h)\right\vert
\]
with $m=\{m_{\alpha}|\alpha\in\Delta\},m_{\alpha},d\in\mathbb{Z}_{\geq0}$ and
$x$ is a constant coefficient differential operator on $\mathfrak{a}$.
$\mathcal{T}(\mathfrak{a})$ is the space of all $u$ in $C^{\infty
}(\mathfrak{a})$ such that all of the $t_{d,m,x}(u)<\infty$ \ endowed with the
topology induced by these semi-norms. Then the map
\[
\mu\mapsto(h\mapsto e^{-\rho(h)}\mu(\exp h)
\]
defines a topological isomorphism of $\mathcal{W}(\mathfrak{a})$ onto
$\mathcal{T}(\mathfrak{a})$ with inverse $u\mapsto(a\mapsto e^{\rho}u(\log
a))$. The main results of the previous section can be stated in the following form.

\begin{theorem}
If $u\in\mathcal{T}(\mathfrak{a}),\nu\in\mathfrak{a}^{\ast}$ set%
\[
\mathcal{K}(u)(\nu)=\int_{\mathfrak{a}}u(h)\overline{K_{\nu}(h)}dh
\]
then%
\[
u(h)=\int_{\mathfrak{a}^{\ast}}K_{\nu}(h)\mathcal{K}(u)(\nu)\mu(\nu)d\nu.
\]
Furthermore, if $u,w\in\mathcal{T}(\mathfrak{a})$ then%
\[
\int_{\mathfrak{a}}u(h)\overline{w(h)}dh=\int_{\mathfrak{a}^{\ast}}%
\mathcal{K}(u)(\nu)\overline{\mathcal{K}(w)(\nu)}\mu(\nu)d\nu.
\]

\end{theorem}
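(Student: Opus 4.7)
The plan is to transfer both the inversion formula and the Plancherel identity from the preceding section (stated for the Whittaker Schwartz space $\mathcal{C}(N\backslash G/K;\chi)$) to $\mathcal{T}(\mathfrak{a})$ via the topological isomorphism $u\mapsto(a\mapsto e^{\rho(\log a)}u(\log a))$ displayed just before the statement.

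\textbf{Step 1 (produce the associated Whittaker function).} Given $u\in\mathcal{T}(\mathfrak{a})$, I would form its image $\phi\in\mathcal{W}(\mathfrak{a})$ under that isomorphism and then let $\psi_u\in\mathcal{C}(N\backslash G/K;\chi)$ be the unique element determined through the Iwasawa decomposition $G=NAK$ by $\psi_u(nak)=\chi(n)\phi(a)$. That $\psi_u$ really lies in the Whittaker Schwartz space is the content of the Appendix~2 isomorphism $\mathcal{W}(\mathfrak{a})\cong\mathcal{C}(N\backslash G/K;\chi)$ that is already tacitly invoked in the first lemma of the previous section.

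\textbf{Step 2 (match $\mathcal{K}(u)$ with $W_\chi(\cdot,\psi_u)$).} Writing Haar measure in Iwasawa form as $dg=a^{-2\rho}\,dn\,da\,dk$, and observing that $g\mapsto J_{i\nu}(\pi_{i\nu}(g)1)$ is right $K$-invariant while the product $\overline{J_{i\nu}(\pi_{i\nu}(ng)1)}\psi_u(ng)=\overline{J_{i\nu}(\pi_{i\nu}(g)1)}\psi_u(g)$ is left $N$-invariant (the $\bar\chi\cdot\chi$ cancellation), the integral defining $W_\chi(\nu,\psi_u)$ collapses to
\[
W_\chi(\nu,\psi_u)=\int_{A}\overline{J_{i\nu}(\pi_{i\nu}(a)1)}\,\phi(a)\,a^{-2\rho}\,da.
\]
Substituting $a=\exp h$, the combination $\phi(\exp h)\,a^{-2\rho}=e^{\rho(h)}u(h)\,e^{-2\rho(h)}=e^{-\rho(h)}u(h)$, paired with $\overline{J_{i\nu}(\pi_{i\nu}(\exp h)1)}$, becomes $u(h)\overline{K_\nu(h)}$ by the defining formula $K_\nu(h)=e^{-\rho(h)}J_{i\nu}(\pi_{i\nu}(\exp h)1)$. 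Hence $W_\chi(\nu,\psi_u)=\mathcal{K}(u)(\nu)$.

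\textbf{Step 3 (apply the earlier theorems).} The Whittaker Inversion Theorem of the preceding section applied to $\psi_u$ and evaluated at $g=\exp h$ reads
\[
e^{\rho(h)}u(h)=\psi_u(\exp h)=\int_{\mathfrak{a}^{\ast}}\mathcal{K}(u)(\nu)\,J_{i\nu}(\pi_{i\nu}(\exp h)1)\,\mu(\nu)\,d\nu;
\]
dividing both sides by $e^{\rho(h)}$ and using the definition of $K_\nu$ yields the claimed inversion formula. For the Plancherel identity, the same Iwasawa change of variables that produced Step~2 shows
\[
\int_{N\backslash G}\psi_u(g)\overline{\psi_w(g)}\,dg=\int_{\mathfrak{a}}u(h)\overline{w(h)}\,dh,
\]
and then the Plancherel corollary of the preceding section, together with the identification of Step~2, gives the second identity.

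The main obstacle is not analytic but structural: the identification $\mathcal{W}(\mathfrak{a})\cong\mathcal{C}(N\backslash G/K;\chi)$ requires matching the Toda-type seminorms on $A$ against the genuine Whittaker Schwartz seminorms on $G$, which is precisely what Appendix~2 supplies. Once that dictionary is in place, both statements are a matter of tracking Iwasawa Jacobians.
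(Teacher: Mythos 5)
Your proposal is correct and follows essentially the same route as the paper: transfer $u$ to an element $\psi$ of $\mathcal{C}(N\backslash G/K;\chi)$ via the Appendix~2 isomorphisms, identify $\mathcal{K}(u)(\nu)$ with $W_{\chi}(\nu,\psi)$ through the Iwasawa integration formula, and then invoke the spherical Whittaker inversion theorem and its Plancherel corollary, undoing the $e^{\rho}$ twist at the end. Your Step~3 treatment of the Plancherel identity merely spells out the change of variables that the paper leaves implicit in its closing sentence.
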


\begin{proof}
We note that if $u\in\mathcal{T}(\mathfrak{a})$ then $u(h)=e^{-\rho(h)}%
\phi(\exp h)$ with $\phi\in\mathcal{W}=\mathcal{C}(N\backslash G/K;\chi
)_{|A\text{.}}$ Thus if $\psi(nak)=\chi(n)\phi(a)$ then
\[
\mathcal{K}(u)(\nu)=\int_{A}a^{-\rho}\phi(a)a^{-\rho}\overline{J_{i\nu}%
(\pi_{i\nu}(a)1)}da=\int_{A}a^{-2\rho}\phi(a)\overline{J_{i\nu}(\pi_{i\nu
}(a)1)}da
\]%
\[
=\int_{N\backslash G}\psi(g)\overline{J_{i\nu}(\pi_{i\nu}(g)1)}dg=\mathcal{W}%
_{\chi}(\nu,\psi).
\]
The theorem says that%
\[
\psi(g)=\int_{\mathfrak{a}^{\ast}}\mathcal{W}_{\chi}(\nu,\psi)J_{i\nu}%
(\pi_{i\nu}(g)1)\mu(\nu)d\nu.
\]
So%
\[
u(h)=e^{-\rho(h)}\psi(\exp(h))=e^{-\rho(h)}\int_{\mathfrak{a}^{\ast}%
}\mathcal{K}_{\nu}(u)(\nu)J_{i\nu}(\pi_{i\nu}(\exp h)1)\mu(\nu)d\nu
=\int_{\mathfrak{a}^{\ast}}\mathcal{K}_{\nu}(u)(\nu)K_{\nu}(h)\mu(\nu)d\nu.
\]
The above formulas lead to the second assertion of the theorem.
\end{proof}

The rest of this section involves recalling several results from
\cite{Goodman-W2} which are necessary to the proof of the integrability of the
generalized Toda Lattices. Since $\mathfrak{g}=\mathfrak{n}\oplus
\mathfrak{a}\oplus\mathfrak{k}$ the Poincar\'{e}-Birkhof-Witt Theorem implies
that%
\[
U(\mathfrak{g})=U(\mathfrak{n}\oplus\mathfrak{a)\oplus}U(\mathfrak{g}%
)\mathfrak{k}\text{.}%
\]
Let $p$ denote the projection of $U(\mathfrak{g})$ onto $U(\mathfrak{n}%
\oplus\mathfrak{a)}$ corresponding to this direct sum decomposition. If $x\in
U(\mathfrak{g})^{\mathfrak{k}}$ (the centralizer of $\mathfrak{k}$ in
$U(\mathfrak{g)}$) and if $y\in U(\mathfrak{g})$ then $y=p(y)+\sum u_{i}Y_{i}$
and $x=p(x)+\sum w_{i}Y_{i},$with $Y_{i}\in\mathfrak{k},u_{i},w_{i}\in
U(\mathfrak{g}).$ Thus%
\[
yx=p(y)x+\sum u_{i}Y_{i}x=p(y)x+\sum u_{i}xY_{i}%
\]%
\[
=p(y)p(x)+p(y)\sum w_{i}Y_{i}+\sum u_{i}xY_{i}.
\]
This $p(yx)=p(y)p(x)$. Consider the two sided ideal in $U(\mathfrak{n}%
\oplus\mathfrak{a}),\mathcal{I=}U(\mathfrak{n}\oplus\mathfrak{a}%
)[\mathfrak{n},\mathfrak{n}]$. Then
\[
U(\mathfrak{n}\oplus\mathfrak{a})/\mathcal{I\cong}U(\mathfrak{n/}%
[\mathfrak{n},\mathfrak{n}]\oplus\mathfrak{a}).
\]
Let $\mu:$ $U(\mathfrak{n}\oplus\mathfrak{a})\rightarrow U(\mathfrak{n/}%
[\mathfrak{n},\mathfrak{n}]\oplus\mathfrak{a})$ be the corresponding
surjection and $q:U(\mathfrak{g})^{\mathfrak{k}}\rightarrow U(\mathfrak{n/}%
[\mathfrak{n},\mathfrak{n}]\oplus\mathfrak{a})$ be the corresponding
homomorphism, that is $q=\mu\circ p$. If $h\in\mathfrak{a}$ then define
$\partial_{h}f(x)=\frac{d}{dt}f(x+th)_{\backslash t=0}$. If $\lambda
\in\mathfrak{a}^{\ast}$ define $m_{\lambda}f=e^{\lambda}f$. Then
$[\partial_{h},m_{\lambda}]=\lambda(h)m_{\lambda}$. Let $\mathcal{A}$ be the
algebra of operators on $C^{\infty}(\mathfrak{a})$ generated by $\partial_{h}$
and $m_{\alpha}$ for $h\in\mathfrak{a}$ and $\alpha\in\Delta$. We define
$\tau(h)=\partial_{h}$ and $\tau(x)=d\chi(x)m_{\alpha}$ if $x\in
\mathfrak{n}/[\mathfrak{n}.\mathfrak{n]}_{\alpha}$. Then $\tau\circ q$ defines
a homomorphism of $U(\mathfrak{n/}[\mathfrak{n},\mathfrak{n}]\oplus
\mathfrak{a})$ onto $\mathcal{A}$. In \cite{Goodman-W2} we proved

\begin{theorem}
The centralizer of $\tau\circ q(C)$ in $\mathcal{A}$ is $\tau\circ
q(U(\mathfrak{g})^{\mathfrak{k}})$.
\end{theorem}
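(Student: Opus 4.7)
The statement naturally splits into a containment and its converse, which I would prove separately.

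For the easy inclusion, that $\tau\circ q(U(\mathfrak{g})^{\mathfrak{k}})$ sits inside the centralizer of $\tau\circ q(C)$, the key observation is that $\tau\circ q$ is a ring homomorphism when restricted to $U(\mathfrak{g})^{\mathfrak{k}}$. Indeed, the identity $p(yx)=p(y)p(x)$ proved in the text for $x\in U(\mathfrak{g})^{\mathfrak{k}}$ and arbitrary $y\in U(\mathfrak{g})$, specialized to both $x,y\in U(\mathfrak{g})^{\mathfrak{k}}$, makes $p|_{U(\mathfrak{g})^{\mathfrak{k}}}$ multiplicative, and composing with the honest homomorphisms $\mu$ and $\tau$ carries this over to $\tau\circ q$. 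Since $C\in Z(\mathfrak{g})$ commutes with every $z\in U(\mathfrak{g})^{\mathfrak{k}}$, the homomorphism property transports this commutation to $\mathcal{A}$.

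For the converse, my plan is to use the Whittaker inversion theorem of the previous section as a spectral tool. First I would produce joint eigenfunctions for $\tau\circ q(U(\mathfrak{g})^{\mathfrak{k}})$: for each $z\in U(\mathfrak{g})^{\mathfrak{k}}$, the vector $d\pi_{i\nu}(z)\cdot 1$ remains $K$-fixed in $C^{\infty}(M\backslash K)$ because $z$ commutes with $\mathfrak{k}$, so it equals $\gamma_{i\nu}(z)\cdot 1$ for a scalar $\gamma_{i\nu}(z)$, and the radial-part computation used in Appendix 3 for $C$ yields $\tau\circ q(z)\,K_{i\nu}=\gamma_{i\nu}(z)\,K_{i\nu}$. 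The Harish-Chandra isomorphism then identifies the family $\nu\mapsto\gamma_{i\nu}$ with the algebra $\mathbb{C}[\mathfrak{a}^{\ast}]^{W}$ of $W$-invariant polynomials on $\mathfrak{a}^{\ast}$.

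Now given $D\in\mathcal{A}$ with $[D,\tau\circ q(C)]=0$, the commutation forces $DK_{i\nu}$ into the eigenspace of the Toda Hamiltonian at eigenvalue $\tfrac{1}{2}\|\nu\|^{2}$ inside the tempered functions. The functional equation for Jacquet integrals identifies $K_{iw\nu}$ with a scalar multiple of $K_{i\nu}$ for each $w\in W$, so this eigenspace is one-dimensional for generic $\nu$ and $DK_{i\nu}=\lambda_{D}(\nu)K_{i\nu}$. A regularity argument on the holomorphic family $\nu\mapsto K_{i\nu}$ shows $\lambda_{D}$ has polynomial growth, and Chevalley restriction then produces $z\in U(\mathfrak{g})^{\mathfrak{k}}$ with $\gamma_{i\nu}(z)=\lambda_{D}(\nu)$. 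The difference $D-\tau\circ q(z)$ annihilates every $K_{i\nu}$, and applying the inversion formula $u(h)=\int K_{i\nu}(h)\,\mathcal{K}(u)(\nu)\,\mu(\nu)\,d\nu$ shows it vanishes on all of $\mathcal{T}(\mathfrak{a})$; since elements of $\mathcal{A}$ are determined by their action on $\mathcal{T}(\mathfrak{a})$, I conclude $D=\tau\circ q(z)$.

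The main obstacle is the step that reduces $DK_{i\nu}$ from a general $W$-linear combination to a scalar multiple of $K_{i\nu}$. This reduction relies on the Jacquet integral functional equation and a description of the tempered $L_{c}$-eigenspace as the Weyl orbit of $K_{i\nu}$, both of which involve nontrivial asymptotic analysis of Whittaker functions closely related to the proof of the holomorphic continuation of $J_{\chi,\nu}$. The remaining ingredients, namely the Chevalley description of $z\mapsto\gamma_{i\nu}(z)$ and the faithfulness of the spectral map through the inversion formula, are comparatively routine.
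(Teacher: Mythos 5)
There is a genuine gap, and before describing it, note that the paper does not actually prove this statement: it is quoted from \cite{Goodman-Wallach}, where the argument is algebraic (a filtration/symbol analysis of the commutant of the Toda Hamiltonian, resting on the classical fact that the Poisson commutant is generated by $W$-invariant symbols), not spectral. Your easy inclusion is fine: $p(yx)=p(y)p(x)$ for $x\in U(\mathfrak{g})^{\mathfrak{k}}$ does make $\tau\circ q$ multiplicative on $U(\mathfrak{g})^{\mathfrak{k}}$, and centrality of $C$ transports.

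The gap is in the converse, exactly at the step you flag as the "main obstacle," and it is not merely technical. From $[D,L_{c}]=0$ you only get that $DK_{i\nu}$ is an eigenfunction of the single operator $L_{c}$ with eigenvalue $\tfrac12\Vert\nu\Vert^{2}$. In rank $\geq 2$ that eigenspace, even within functions of moderate growth, is not spanned by the Weyl orbit of $K_{i\nu}$: every $K_{i\nu'}$ with $\Vert\nu'\Vert=\Vert\nu\Vert$ lies in it, so it is infinite-dimensional, and the Jacquet functional equations (which relate $\nu$ only to $w\nu$, $w\in W$) cannot collapse it to a line. To get $DK_{i\nu}=\lambda_{D}(\nu)K_{i\nu}$ you would need $DK_{i\nu}$ to be a \emph{joint} eigenfunction of the full commutative algebra $\tau\circ q(U(\mathfrak{g})^{\mathfrak{k}})$ (whose tempered joint eigenspace is one-dimensional by the Whittaker asymptotics), but $D$ is only assumed to commute with $\tau\circ q(C)$; proving it commutes with the rest, or equivalently that it preserves the joint eigenlines, is essentially the content of the theorem. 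Indeed the paper deduces "$[D,L_{c}]=0\Rightarrow DK_{\nu}=\sigma(D,i\nu)K_{\nu}$" as a \emph{consequence} of the Goodman--Wallach theorem, so taking that implication as an input makes the argument circular. (Secondary points needing care even if this were repaired: temperedness of $DK_{i\nu}$, since the coefficients $e^{2\alpha}$ of $D$ blow up at the potential walls and must be beaten by decay estimates for $K_{i\nu}$ not established in this paper; polynomiality, not just polynomial growth, of $\lambda_{D}$; and justification of moving $D$ under the inversion integral.) The last step, that an operator in $\mathcal{A}$ vanishing on $\mathcal{T}(\mathfrak{a})$ is zero, is fine since $C_{c}^{\infty}(\mathfrak{a})\subset\mathcal{T}(\mathfrak{a})$.
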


This result implies that the centralizer of $L_{m}$ in $\mathcal{A}$ is an
algebra generated by $\dim A$ elements with algebraically independent symbols.
The results of \cite{Goodman-W2}\ imply that

\begin{theorem}
If $D\in\mathcal{A}$ and $[D,L_{c}]=0$ then $DK_{\nu}=\sigma(D,i\nu)K_{\nu}$.
\end{theorem}

Thus in particular all of the assertions for the quantum non-periodic Toda
lattice have been proved.

\section{An implication}

The purpose of this section is to prove the following result which is noted
indirectly in the case of real rank 1 in [GW1].

Let the notation be as in the previous section. Let $\mathfrak{g}_{o}$ be a
semi-simple Lie subalgebra of $\mathfrak{g}$ that is split over $\mathbb{R}$,
is invariant under $\theta$, contains $\mathfrak{a}$, its root system relative
$\mathfrak{a}$ contains $\Delta$ as a set of simple roots. Let $G_{o}$ be the
connected subgroup of $G$ corresponding to $G_{o}$ and let $G_{o}=N_{o}AK_{o}$
be an Iwasawa decomposition of $G_{o}$ with $K_{o}=G_{o}\cap K$ and
$N_{o}=G_{o}\cap N$. We can now define the objects $J,c(\nu),\pi_{\nu},$ etc
for $G^{o}.$ We will use a superscript for an object if it pertains to the
group $G$ or respectively $G_{o}.$ That is $J^{G}$ or $J^{G_{o}}$. Also to
indicate the dependence on $\chi$ we will write $J_{\chi,\nu}^{G}$.

If $\chi$ is a generic character of $N$ let $d\chi_{|\mathfrak{n}_{\alpha}%
}=i\xi_{\alpha},\alpha\in\Delta$, as in the previous sections. The result that
is the purpose of this section is

\begin{theorem}
Let $\chi$ be a generic character of $G$ and let $\eta$ be a character of
$N_{o}$ such that $d\eta_{|\left(  \mathfrak{n}_{o}\right)  _{\alpha}}%
=i\psi_{\alpha}$ with$\left\Vert \psi_{\alpha}\right\Vert =\left\Vert
\xi_{\alpha}\right\Vert $. Then%
\[
c^{G_{o}}(\nu)a^{-\rho^{G}}J_{\chi,\nu}^{G}(\pi_{\nu}^{G}(a)1)=c^{G}%
(\nu)a^{-\rho_{o}^{G}}J_{\eta,\nu}^{G_{o}}(\pi_{\nu}^{G_{o}}(a)1).
\]

\end{theorem}

\begin{proof}
Fix $\nu$. Define $\Phi(h)=e^{-\rho^{G}(h)}J_{\chi,\nu}^{G}(\pi_{\nu}^{G}(\exp
h)1)$ for $h\in\mathfrak{a}$. Then we have seen that if $z\in\mathcal{A}%
^{L_{c}}$ with
\[
c_{\alpha}^{2}=-\left\Vert \xi_{\alpha}\right\Vert ^{2}%
\]
then
\[
z\Phi(h)=\sigma(z)(\nu)\Phi(h).
\]
Define for $g=nak$ with $n\in N_{o},a\in A$ and $k\in K_{o}$
\[
u(nak)=\eta(n)a^{\rho^{G_{o}}}\Phi(\log h)\text{.}%
\]
If $x\in U(\mathfrak{g}_{o})^{K_{o}}$ then $xu=\nu(\gamma^{G_{o}}(x))u$ with
$\gamma^{G_{o}}$ Harish-Chandra's homomorphism of $U(\mathfrak{g}_{o})^{K_{o}%
}$ onto $U(\mathfrak{a})^{W}$. Also if $y\in U(\mathfrak{g}_{o})$ then there
exists $C,d$ we have $\left\Vert yu(g)\right\Vert \leq C\left\Vert
g\right\Vert ^{d}$. The Casselman-Wallach theorem\ ??? implies that $u(g)$ is
a multiple of $J_{\nu}^{G_{o}}(g)$. This implies that there exists $\beta
(\nu)$ meromorphic such that
\[
a^{-\rho^{G}}J_{\chi,\nu}^{G}(\pi_{\nu}^{G}(a)1)=\beta(\nu)a^{-\rho^{G_{o}}%
}J_{\chi,\nu}^{G_{o}}(\pi_{\nu}^{G_{o}}(a)1).
\]
If $(\nu,\alpha)<0$ for all $\alpha\in\Delta$ then the limit formula in Lemma
\ref{asymptotic} implies that%
\[
c^{G}(\nu)=\beta(\nu)c^{G_{o}}(\nu).
\]
This implies the Lemma.
\end{proof}

\section{Appendix 1: Some inequalities}

The purpose of this appendix is to prove some estimates that will be used in
the body of the paper. The notation is as in Section 2.

\begin{lemma}
\label{Schwartz-estimate}Let $f\in\mathcal{C}(G/K)$ then for each $l\geq0$ and
$D$ a constant coefficient differential operator on $\mathfrak{a}^{\ast} $
there exists $B_{D,l}$ such that%
\[
\left\vert D(\pi_{i\nu}(f)1)(k)\right\vert \leq B_{D,l}(1+\left\Vert
\nu\right\Vert )^{-d},\nu\in\mathfrak{a}^{\ast},k\in K.
\]

\end{lemma}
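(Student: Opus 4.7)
The plan is to combine Sobolev embedding on the compact group $K$ with the Casimir trick, inducting on the order of $D$ in order to pass from the case $D=\mathrm{id}$ to general constant coefficient $D$. Throughout I use the identity
\[
d\pi_{i\nu}(x)\pi_{i\nu}(f)=\pi_{i\nu}(L_x f),\qquad x\in U(\mathfrak{g}),
\]
obtained by a change of variable in the defining integral, together with the fact that the Casimir $C$ acts on $\pi_{i\nu}$ by the scalar $\chi_\nu=-(\nu,\nu)-(\rho,\rho)$, which gives $\chi_\nu^N\pi_{i\nu}(f)1=\pi_{i\nu}(L_{C^N}f)1$. Since $|\chi_\nu|\ge c(1+\|\nu\|)^2$, dividing by $\chi_\nu^N$ converts any uniform bound in $\nu$ into arbitrary polynomial decay.

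For the base case $D=\mathrm{id}$, Sobolev embedding furnishes an $m_0$ with $\|u\|_\infty\le C\|(1+C_K)^{m_0}u\|_{L^2(K)}$ on $C^\infty(K)$. Applied to $u=\pi_{i\nu}(f)1$ with $C_K\in U(\mathfrak{k})\subset U(\mathfrak{g})$, the identity above combined with the contraction $\|\pi_{i\nu}(h)\|_{\mathrm{op}}\le\|h\|_{L^1(G)}$ gives
\[
\sup_{\nu,k}|\pi_{i\nu}(f)1(k)|\le C\,\|L_{(1+C_K)^{m_0}}f\|_{L^1(G)},
\]
a finite Schwartz semi-norm of $f$. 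Replacing $f$ by $L_{C^N}f$ and dividing by $\chi_\nu^N$ yields $|\pi_{i\nu}(f)1(k)|\le C_N(1+\|\nu\|)^{-2N}$, settling the base case.

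For the inductive step two ingredients are needed. First, a uniform bound on $D\pi_{i\nu}(f)1$: differentiating under the integral in
\[
\pi_{i\nu}(f)1(k)=\int_G f(g)\,a(kg)^{i\nu-\rho}\,dg
\]
produces $D\pi_{i\nu}(f)1(k)=\int_G f(g)\,p(iH(kg))\,a(kg)^{i\nu-\rho}\,dg$ with $H(kg)=\log a(kg)$ and $p$ a polynomial of degree $\operatorname{ord}D$. Since $\|H(kg)\|\le C(1+\log\|g\|)$ uniformly in $k\in K$, the Schwartz estimate on $f$ reduces the problem to bounding $\int_G|g|^{-1/2}(1+\log\|g\|)^{-d}a(kg)^{-\rho}\,dg$ uniformly in $k$; a $KAK$ decomposition together with the $K$-invariance $\int_K a(k'ka)^{-\rho}\,dk'=\Xi(a)$ and the standard estimate $\Xi(a)\le C\,a^{-\rho}(1+\|\log a\|)^{E}$ on the Harish-Chandra spherical function makes this integral finite when $d$ is large. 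This is the main technical step.

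Second, Leibniz applied to the identity $\chi_\nu^N\pi_{i\nu}(f)1=\pi_{i\nu}(L_{C^N}f)1$ yields
\[
\chi_\nu^N\,D\pi_{i\nu}(f)1=D\pi_{i\nu}(L_{C^N}f)1-\sum_{D',D''}c_{D',D''}(D'\chi_\nu^N)(D''\pi_{i\nu}(f)1),
\]
where every $D''$ in the sum satisfies $\operatorname{ord}D''<\operatorname{ord}D$. The first term on the right is uniformly bounded in $\nu$ by the previous paragraph applied to $L_{C^N}f$; by the inductive hypothesis $|D''\pi_{i\nu}(f)1(k)|\le C_l(1+\|\nu\|)^{-l}$ for any $l$, while $|D'\chi_\nu^N|\le C(1+\|\nu\|)^{2N}$. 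Combining these and dividing by $|\chi_\nu|^N\ge c(1+\|\nu\|)^{2N}$ yields $|D\pi_{i\nu}(f)1(k)|\le C_N(1+\|\nu\|)^{-2N}+C_l'(1+\|\nu\|)^{-l}$; choosing $N$ and $l$ both large produces any desired polynomial decay and closes the induction.
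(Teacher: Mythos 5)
Your overall architecture (the scalar action of the Casimir to trade decay in $\nu$ for left derivatives of $f$, plus direct estimates on the Iwasawa integral, with an induction on $\operatorname{ord}D$) is a legitimate alternative to the paper's argument, and your ``main technical step'' is sound: after the substitution $g\mapsto k^{-1}g$ and the Cartan decomposition, the bound $\int_K a(k_1b)^{-\rho}dk_1=\Xi(b)\le C b^{-\rho}(1+\|\log b\|)^{E}$ against the density $\asymp b^{2\rho}$ and the Schwartz weight $|g|^{-1/2}(1+\log\|g\|)^{-d}$ does give a finite bound, uniform in $k$ and $\nu$, and it also justifies differentiating under the integral sign. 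The Leibniz/induction step is likewise fine.

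The genuine flaw is in your base case: the inequality $\|\pi_{i\nu}(h)\|_{\mathrm{op}}\le\|h\|_{L^1(G)}$ is only useful if $h\in L^1(G)$, and $\|L_{(1+C_K)^{m_0}}f\|_{L^1(G)}$ is \emph{not} ``a finite Schwartz semi-norm of $f$'': the Harish-Chandra Schwartz space is not contained in $L^1(G)$. Indeed $f\in\mathcal{C}(G)$ only gives $|f(g)|\le C_d|g|^{-1/2}(1+\log\|g\|)^{-d}$, and in $KAK$ coordinates the Haar density is $\asymp b^{2\rho}$ while $|g|^{-1/2}=b^{-\rho}$, so $\int_G|f|$ behaves like $\int_{\mathfrak a^+}e^{\rho(H)}(1+\|H\|)^{-d}dH=\infty$; this failure of integrability is exactly why the tempered/Schwartz theory is delicate. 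As written, that step would fail. It is, however, inessential to your scheme: the uniform bound you prove in the inductive step, taken with $D=\mathrm{id}$, already gives $\sup_{\nu,k}|\pi_{i\nu}(h)1(k)|<\infty$ for $h\in\mathcal{C}(G/K)$, and applying it to $h=L_{C^N}f$ and dividing by $\chi_\nu^N$ yields the base case without any Sobolev or $L^1$ argument (alternatively one can bound $\|\pi_{i\nu}(h)1\|_{L^2(K)}^2$ by $\iint|h(g)||h(g')|\Xi(g^{-1}g')\,dg\,dg'$, which is finite on $\mathcal{C}(G)$, but this again uses $\Xi$-estimates rather than $L^1$). For comparison, the paper takes a shorter route: it rewrites $(\pi_{i\nu}(f)1)(k)$ as the Euclidean Fourier transform at $-\nu$ of the Harish-Chandra (Abel) transform $h\mapsto e^{\rho(h)}\int_{\bar N}f(k^{-1}\bar n\exp h)\,d\bar n$, and quotes the theorem that this transform maps $\mathcal{C}(G/K)$ continuously into $\mathcal{S}(\mathfrak a)$; all decay in $\nu$, including the $\nu$-derivatives $D$, then comes for free from Euclidean Schwartz theory, uniformly in $k$ by compactness of $K$. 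Your approach replaces that cited continuity theorem by the Casimir trick plus the $\Xi$-estimates, at the cost of the induction and the repair above.
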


\begin{proof}
By definition%
\[
\left(  \pi_{i\nu}(f)1\right)  (k)=\int_{G}f(g)a(kg)^{i\nu-\rho}dg=\int
_{G}f(k^{-1}g)a(g)^{i\nu-\rho}dg.
\]
Up to normalization of measures one has the standard integration formula (c.f.
\cite{HArmHom} 7.7.4)%
\[
\int_{G}\varphi(g)dg=\int_{\bar{N}\times A\times K}a^{2\rho}\varphi(\bar
{n}ak)d\bar{n}dadk.
\]
Thus%
\[
\left(  \pi_{i\nu}(f)1\right)  (k)=\int_{\bar{N}\times A}a^{2\rho}f(k^{-1}%
\bar{n}a)a^{i\nu-\rho}d\bar{n}da=\int_{\bar{N}\times A}a^{\rho}f(k^{-1}\bar
{n}a)a^{i\nu}d\bar{n}da.
\]
The map $\varphi\mapsto(h\mapsto e^{\rho(h)}\int_{\bar{N}}\varphi(\bar{n}\exp
h)d\bar{n}=\varphi^{\bar{P}})$ is a continuous map of $\mathcal{C}(G/K)$ to
$\mathcal{S}(\mathfrak{a})$ (c.f. Theorem 7.2.1 \cite{RRGI}.$.$The map
$f\mapsto L_{k}f$ is a continuous map of $K$ to $\mathcal{C}(G/K)$ and the
Fourier transform, $\mathcal{F}$ , is a continuous map from $\mathcal{S}%
(\mathfrak{a})$ to $\mathcal{S}(\mathfrak{a}^{\ast})$. So if $q$ is a
continuous semi-norm on $\mathcal{S}(\mathfrak{a}^{\ast})$ then $q(\mathcal{F}%
((R_{k}f)^{\bar{P}})\leq B_{q}.$ Thus if $q_{D,l}(u)=\sup_{\nu\in
\mathfrak{a}^{\ast}}(1+\left\Vert \nu\right\Vert )^{l}\left\vert D\alpha
(\nu)\right\vert $ for $l\geq0$ \ and $D$ a constant coefficient differential
operator on $\mathfrak{a}^{\ast}$ then since $K$ is compact
\[
\max q_{D,l}(\mathcal{F}((R_{k}f)^{\bar{P}}))\leq B_{D_{l},q_{l}}.
\]
This implies that
\[
\left\vert D\mathcal{F}((R_{k}f)^{\bar{P}})(\nu)\right\vert \leq B_{D,q_{l}%
}(1+\left\Vert \nu\right\Vert )^{-l}.
\]
Unraveling the above we have%
\[
\left(  \pi_{i\nu}(f)1\right)  (k)=\mathcal{F}((R_{k}f)^{\bar{P}})(-\nu)
\]
so the lemma is proved.
\end{proof}

\begin{lemma}
\label{simple-ineq}If $x,y\in G$ then $a(xg)^{-\rho}\leq\left\vert
g\right\vert ^{\frac{1}{2}}a(x)^{-\rho}.$
\end{lemma}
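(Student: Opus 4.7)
The plan is to reduce the inequality to the case $x = e$ by exploiting the Iwasawa cocycle, then use $K$-invariance of $|\cdot|$, and finally realize $a(g)^{-2\rho}$ as the norm of a specific vector in $\wedge^{m}\mathfrak{g}$ to compare it with $|g|$.

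First I would establish the cocycle identity $a(xg) = a(x)\,a(k(x)g)$. Writing the Iwasawa decomposition of $x = \bar{n}_{0}\,a(x)\,k(x)$ and of $k(x)g = \bar{n}_{1}\,a(k(x)g)\,k(k(x)g)$, one has
\[
xg = \bar{n}_{0}\,\bigl(a(x)\bar{n}_{1}a(x)^{-1}\bigr)\,a(x)\,a(k(x)g)\,k(k(x)g),
\]
and since $a(x)\bar{n}_{1}a(x)^{-1}\in\bar{N}$, uniqueness of the decomposition gives the claim. Thus $a(xg)^{-\rho} = a(x)^{-\rho}\,a(k(x)g)^{-\rho}$, reducing the lemma to proving $a(kg)^{-\rho} \leq |g|^{1/2}$ for $k\in K$. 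Because $K\subset O(n)$, $\mathrm{Ad}(k)$ preserves $\langle\cdot,\cdot\rangle$, so $\wedge^{m}\mathrm{Ad}(k)$ is orthogonal and $|kg| = |g|$. It therefore suffices to show $a(g)^{-\rho}\leq |g|^{1/2}$.

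Next, let $\bar{e}_{0}$ be a unit vector spanning the one-dimensional line $\wedge^{m}\bar{\mathfrak{n}}\subset\wedge^{m}\mathfrak{g}$. Since $\bar{N}$ is unipotent and preserves $\bar{\mathfrak{n}}$, $\wedge^{m}\mathrm{Ad}(\bar{n})$ acts as the identity on this line; and since $\mathrm{Ad}(a)$ has determinant $a^{-2\rho}$ on $\bar{\mathfrak{n}}$, one has $\wedge^{m}\mathrm{Ad}(a)\bar{e}_{0} = a^{-2\rho}\bar{e}_{0}$. Writing $g = \bar{n}\,a(g)\,k(g)$,
\[
\wedge^{m}\mathrm{Ad}(g^{-1})\,\bar{e}_{0} = a(g)^{2\rho}\,\wedge^{m}\mathrm{Ad}(k(g)^{-1})\,\bar{e}_{0},
\]
so, by orthogonality of $\mathrm{Ad}(k(g)^{-1})$,
\[
\bigl\|\wedge^{m}\mathrm{Ad}(g^{-1})\,\bar{e}_{0}\bigr\| = a(g)^{2\rho}.
\]

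Finally, for any invertible operator $T$ on a Hilbert space and any unit vector $v$, $\|Tv\|\geq\|T^{-1}\|^{-1}$. Applying this to $T = \wedge^{m}\mathrm{Ad}(g^{-1}) = (\wedge^{m}\mathrm{Ad}(g))^{-1}$ and $v = \bar{e}_{0}$ yields $a(g)^{2\rho}\geq |g|^{-1}$, i.e., $a(g)^{-\rho}\leq |g|^{1/2}$, completing the argument. The only non-formal point is the identification $\|\wedge^{m}\mathrm{Ad}(g^{-1})\bar{e}_{0}\| = a(g)^{2\rho}$, which hinges on the top-wedge of $\bar{\mathfrak{n}}$ being $\bar{N}$-fixed; everything else is bookkeeping.
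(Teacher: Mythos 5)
Your proof is correct and rests on exactly the same key identity as the paper's: realizing $a(g)^{2\rho}=\left\Vert \wedge^{m}\mathrm{Ad}(g^{-1})\bar{e}_{0}\right\Vert$ for a unit vector $\bar{e}_{0}\in\wedge^{m}\bar{\mathfrak{n}}$ (unipotence of $\bar{N}$ on the top wedge, the $a^{-2\rho}$ determinant on $\bar{\mathfrak{n}}$, orthogonality of $\mathrm{Ad}(k)$) and then comparing with the operator norm $\left\vert g\right\vert$. Your cocycle reduction $a(xg)=a(x)a(k(x)g)$ is harmless but not needed: the paper handles general $x$ in one line by writing $a(x)^{2\rho}=\left\Vert \wedge^{m}\mathrm{Ad}(g)\,\wedge^{m}\mathrm{Ad}((xg)^{-1})\bar{e}_{0}\right\Vert \leq\left\vert g\right\vert a(xg)^{2\rho}$, which is your final norm estimate applied directly.
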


\begin{proof}
Let (as in section 2) $E=\wedge^{m}\mathfrak{g}$ ($m=\dim\mathfrak{n}$) and
let $u_{o}\in\wedge^{m}\mathfrak{\bar{n}}$ be a unit vector. If $g\in G$ with
$g=\bar{n}a(g)k$ with $\bar{n}\in\overline{N}$ and $k\in K$ then $\left\Vert
\wedge^{m}g^{-1}u_{o}\right\Vert =\left\Vert \wedge^{m}k^{-1}\wedge
^{m}a(g)^{-1}\wedge^{m}\bar{n}^{-1}u_{o}\right\Vert =a(g)^{2\rho}$. This
implies that if $x,g\in G$ then%
\[
a(x)^{2\rho}=\left\Vert \wedge^{m}g\wedge^{m}g^{-1}\wedge^{m}x^{-1}%
v_{o}\right\Vert \leq\left\vert g\right\vert a(xg)^{2\rho}%
\]
hence%
\[
a(xg)^{-\rho}\leq\left\vert g\right\vert ^{\frac{1}{2}}a(x)^{-\rho}.
\]

\end{proof}

Let $x_{o}\in\mathfrak{n}$ be orthogonal to $[\mathfrak{n},\mathfrak{n}]$ and
such that $d\chi(x_{o})=i$.

\begin{proposition}
\label{less-simple-ineq}Let $f\in\mathcal{C}(G/K)$ and if $\nu\in
\mathfrak{a}^{\ast}$ then set $u(\nu)=\pi_{i\nu}(f)1\in C^{\infty}(M\backslash
K).$ If $\omega$ is a compact subset of $G$ then there exist constants
$M,D,C_{\omega},d<\infty$ such that if $g\in\omega$ then
\[
\left\vert \int_{\ker\chi}\int_{\mathfrak{a}^{\ast}}\int_{N}1_{i\nu-(1+z)\rho
}(n_{1}\exp tx_{o})ng)\overline{u(\nu)_{i\nu-(1+\bar{z}\rho)}(n_{1})}dn_{1}%
\mu(\nu)d\nu dn\right\vert \leq C_{\omega}^{1+\operatorname{Re}z}%
DM(1+\left\vert t\right\vert )^{d}.
\]

\end{proposition}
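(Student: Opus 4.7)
The plan is to estimate by absolute values and peel off the three layers of integration in turn: the $\mathfrak{a}^\ast$-integral will contribute a constant $D$, the compact set $\omega$ will contribute $C_\omega^{1+\operatorname{Re}z}$ (via Lemma \ref{simple-ineq}), the factor $\exp(tx_o)$ will contribute $(1+|t|)^d$, and what is left will be a $(t,g)$-independent integral whose finiteness rests on Theorem \ref{Raphael}.

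First, by Lemma \ref{Schwartz-estimate}, $|u(\nu)(k(n_1))|\leq B_l(1+\|\nu\|)^{-l}$ uniformly in $n_1$. Combined with $\mu(\nu)\leq C(1+\|\nu\|)^r$ and choosing $l>r+\dim\mathfrak{a}$, the $\mathfrak{a}^\ast$-integral is bounded by a constant $D$ independent of the remaining variables. Lemma \ref{simple-ineq} applied as $a(n_1\exp(tx_o)ng)^{-\rho}\leq |g|^{1/2}a(n_1\exp(tx_o)n)^{-\rho}$, with $|g|\leq C_\omega$, absorbs the $C_\omega^{1+\operatorname{Re}z}$ factor after raising to the $(1+\operatorname{Re}z)$-th power.

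For the polynomial dependence on $t$, the key is that $\ker\chi$ is normal in $N$ (because $\ker d\chi\supset[\mathfrak{n},\mathfrak{n}]$), and conjugation by $\exp(tx_o)$, being the exponential of the nilpotent endomorphism $\operatorname{ad}(x_o)$, preserves $\ker\chi$ and its Haar measure (the relevant Jacobian $\det\operatorname{Ad}(\exp(-tx_o))|_{\ker d\chi}$ is $1$). Substituting $n\mapsto\exp(-tx_o)n\exp(tx_o)$ in the outer $\ker\chi$-integral converts $n_1\exp(tx_o)n$ into $n_1 n\exp(tx_o)$. A second application of Lemma \ref{simple-ineq} with auxiliary element $\exp(tx_o)$ gives $a(n_1 n\exp(tx_o))^{-\rho}\leq |\exp(tx_o)|^{1/2}a(n_1 n)^{-\rho}$, and the operator norm $|\exp(tx_o)|=\|\wedge^m\exp(t\operatorname{ad}(x_o))\|$ is polynomial in $t$ since $\operatorname{ad}(x_o)$ is nilpotent; this yields the $(1+|t|)^d$ factor.

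What is left is the $t$- and $g$-independent double integral $\int_{\ker\chi}\int_N a(n_1 n)^{-(1+\operatorname{Re}z)\rho}a(n_1)^{-(1+\operatorname{Re}z)\rho}\,dn_1\,dn$. Substituting $n_1\to n_1 n^{-1}$ by left-invariance in the inner integral, swapping Fubini, and once more using that $\ker\chi$ is normal (the inner $\int_{\ker\chi}$ is simplified by a unit-Jacobian conjugation by $\exp(sx_o)$ followed by a left-translation by $h_1$), the resulting $\ker\chi$-integral depends on $n_1$ only through its $\mathbb{R}$-coordinate $s(n_1)$ in the decomposition $n_1=h_1\exp(s(n_1)x_o)$ of $N\cong\ker\chi\times\mathbb{R}$, and equals $G(s(n_1))$ with $G(u):=\int_{\ker\chi}a(n\exp(ux_o))^{-(1+\operatorname{Re}z)\rho}\,dn$. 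The outer $N$-integration then collapses to $\int_{\mathbb{R}}G(s)^2\,ds=\|G\|_2^2$. The main obstacle is showing $\|G\|_2<\infty$: the naive bound $\int_{\ker\chi}a(n)^{-(1+\operatorname{Re}z)\rho}dn$ is divergent for $\operatorname{Re}z\geq 0$, but Theorem \ref{Raphael} supplies the substitute $\int_{\ker\chi}a(n)^{-(1-\varepsilon)\rho}dn<\infty$, which is exactly the hyperplane integrability not available on all of $N$. A weighted Cauchy--Schwarz applied to $G(s)$, with the weight chosen so that one factor is controlled by Beuzart-Plessis and the other combines with the $ds$-integration into a convergent $c$-function integral $\int_N a(n)^{-(1+\operatorname{Re}z+\delta)\rho}\,dn$ (available for small $\delta>0$ via Gindikin-Karpelovic), gives $\|G\|_2<\infty$ and completes the estimate.
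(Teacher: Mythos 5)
Your opening reductions coincide with the paper's: put absolute values inside, use Lemma \ref{Schwartz-estimate} together with $\mu(\nu)\leq B(1+\Vert\nu\Vert)^{r}$ to bound the $\mathfrak{a}^{\ast}$-integral by a constant, use Lemma \ref{simple-ineq} to peel off $g$ at the cost of $C_{\omega}^{1+\operatorname{Re}z}$, and peel off $\exp(tx_{o})$ at the cost of $|\exp(tx_{o})|^{1/2}\leq Q(1+|t|)^{d}$; your explicit unit-Jacobian conjugation moving $\exp(tx_{o})$ past the $\ker\chi$-variable is a correct (indeed more careful) version of what the paper does tacitly, and one also uses $a(x)^{-\operatorname{Re}z\,\rho}\leq1$ for $x\in N$ to drop the $z$-dependence, as you implicitly do.

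The genuine gap is your final step. The Weil-type rewriting of $\int_{\ker\chi}\int_{N}a(n_{1}n)^{-c\rho}a(n_{1})^{-c\rho}dn_{1}dn$ as $\int_{\mathbb{R}}G(s)^{2}ds$ with $G(s)=\int_{\ker\chi}a(n\exp(sx_{o}))^{-c\rho}dn$ is legitimate, but it only renames the quantity to be bounded (it is exactly the same integral), and the asserted ``weighted Cauchy--Schwarz'' is not carried out and does not go through with the tools you cite. Any such splitting forces you to control the translated integrals: either you need $\int_{\ker\chi}a(n\exp(sx_{o}))^{-(1-\varepsilon)\rho}dn$ bounded uniformly in $s$ --- but Theorem \ref{Raphael} is only the case $s=0$, and removing $\exp(sx_{o})$ via Lemma \ref{simple-ineq} costs $|\exp(sx_{o})|^{1/2}$, which grows polynomially in $s$ and destroys integrability over the unbounded $s$-range --- or you need pointwise decay in $s$ of $\int_{\ker\chi}a(n\exp(sx_{o}))^{-(c+\delta)\rho}dn$, which mere convergence of the $c$-function integral $\int_{N}a(n)^{-(c+\delta)\rho}dn$ does not give. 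The paper closes exactly this point by a different device: it recognizes the inner $n_{1}$-integral as the spherical function $\Xi(n)=\int_{N}a(n_{1}n)^{-\rho}a(n_{1})^{-\rho}dn_{1}$, invokes Harish-Chandra's sharp estimate $\Xi(x)\leq C|x|^{-1/2}(1+\log|x|)^{q}$ (Theorem 5.5.3 of \cite{RRGI}) together with $\Xi(x)=\Xi(x^{-1})$ and $a(x)^{\rho}\leq|x^{-1}|^{1/2}$ to obtain $\Xi(n)\leq C_{\varepsilon}a(n)^{-(1-2\varepsilon)\rho}$ on $\ker\chi$, and only then applies Theorem \ref{Raphael}. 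Some input of that strength --- a pointwise bound of the inner integral by $a(n)^{-(1-\varepsilon)\rho}$ uniformly over $\ker\chi$ --- is precisely what your sketch lacks, so as written the convergence of $\Vert G\Vert_{L^{2}(\mathbb{R})}$ is unproved.
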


\begin{proof}
We put the absolute values inside the integration. We are estimating%
\[
I=\int_{\ker\chi}\int_{\mathfrak{a}^{\ast}}\int_{N}a(n_{1}n(\exp
tx_{o})g)^{-(1+\operatorname{Re}z)\rho}a(n_{1})^{-(1+\operatorname{Re}z)\rho
}\left\vert u(\nu)(k(n_{1}))\right\vert dn_{1}\mu(\nu)d\nu dn.
\]
We note that
\[
\mu(\nu)\leq B(1+\left\Vert \nu\right\Vert )^{r}%
\]
for some $r$ and all $\nu\in\mathfrak{a}^{\ast}$. In Lemma
\ref{Schwartz-estimate} we showed that there exists a constant $L$ such that%
\[
\left\vert u(\nu)\right\vert \leq L_{m}(1+\left\Vert \nu\right\Vert )^{-r-m}%
\]
with $m$ arbitrary we take $m$ to be any $m>\dim\mathfrak{a}.$ Thus%
\[
I\leq M\int_{\ker\chi}\int_{N}a(n_{1}n(\exp tx_{o})g)^{-(1+\operatorname{Re}%
z)\rho}a(n_{1})^{-(1+\operatorname{Re}z)\rho}dn_{1}\mu dn
\]
with
\[
M=BL_{m}\int_{\mathfrak{a}^{\ast}}(1+\left\Vert \nu\right\Vert )^{-m}%
d\nu<\infty.
\]
Also, Lemma \ref{simple-ineq} implies that
\[
a(n_{1}n(\exp tx_{o})g)^{-(1+\operatorname{Re}z)\rho}\leq\left\vert
g\right\vert ^{\frac{^{1+\operatorname{Re}z}}{2}}a(n_{1}n(\exp tx_{o}%
))^{-(1+\operatorname{Re}z)\rho}\leq
\]%
\[
\left\vert g\right\vert ^{\frac{^{1+\operatorname{Re}z}}{2}}a(n_{1}n\exp
tx_{o}))^{-\rho}\leq\exp tx_{o}|^{\frac{1}{2}}\left\vert g\right\vert
^{\frac{^{1+\operatorname{Re}z}}{2}}a(n_{1}n)^{-\rho}.
\]
Since $\wedge^{m}Ad(extx_{o})$ is a polynomial in $t$ there exists a constant
$Q<\infty$ such that $\leq Q(1+\left\vert t\right\vert )^{d}$. Setting
$C_{\omega}=\max_{g\in\omega}\left\vert g\right\vert ^{\frac{1}{2}}$%
\[
I\leq MC_{\omega}^{\frac{1+\operatorname{Re}z}{2}}Q(1+\left\vert t\right\vert
)^{d}\int_{\ker\chi}\int_{N}a(n_{1}n)^{-(1+\operatorname{Re}z)\rho}%
a(n_{1})^{-(1+\operatorname{Re}z)\rho}dn_{1}dn
\]%
\[
\leq MC_{\omega}^{\frac{1+\operatorname{Re}z}{2}}Q(1+\left\vert t\right\vert
)^{d}\int_{\ker\chi}\int_{N}a(n_{1}n)^{-\rho}a(n_{1})^{-\rho}dn_{1}dn
\]%
\[
=MC_{\omega}^{\frac{1+\operatorname{Re}z}{2}}\left(  Q(1+\left\vert
t\right\vert )^{d}\right)  ^{\frac{1+\operatorname{Re}z}{2}}\int_{\ker\chi}%
\Xi(n)dn.
\]
As in the proof of Lemma \ref{simple-ineq} we have $a(g)^{\rho}=\left\Vert
\wedge^{m}Ad(g)^{-1}v_{o}\right\Vert ^{\frac{1}{2}}\leq\left\vert
g^{-1}\right\vert ^{\frac{1}{2}}.$ Also,%
\[
\Xi(x)\leq N\left\vert x\right\vert ^{-\frac{1}{2}}(1+\log\left\vert
x\right\vert )^{s}%
\]
for some $s,N<\infty$ (c.f. Theorem 5.5.3 \cite{RRGI}). Hence
\[
\Xi(x)=\Xi(x^{-1})\leq N\left\vert x^{-1}\right\vert ^{-\frac{1}{2}}%
(1+\log\left\vert x^{-1}\right\vert )^{s}\leq N_{\varepsilon}\left\vert
x^{-1}\right\vert ^{-\frac{1}{2}+\varepsilon}\leq N_{\varepsilon
}a(x)^{-(1-2\varepsilon)}%
\]
for each $\varepsilon>0$. Theorem \ref{Raphael} says that if $\varepsilon$ is
sufficiently small%
\[
\int_{\text{ker}\chi}a(n)^{-(1-2\varepsilon)\rho}dn<\infty.
\]
Completing the proof.
\end{proof}

\section{Appendix 2: The restriction of $\mathcal{C}(N\backslash G/K;\chi)$ to
$A$}

The purpose of this appendix is to give a complete description of the
restriction in its title.

\begin{lemma}
\label{basic}If $m=(m_{1},...,m_{l}),m_{i},d\in\mathbb{Z}_{\geq0}$ then there
exists a continuous semi-norm, $q_{m,d}$,on $\mathcal{C}(N\backslash G;\chi)$
such that if $f\in\mathcal{C}(N\backslash G;\chi)$ then $|f(\exp hk)|\leq
q_{m,d}(f)e^{\rho(h)}e^{-\sum m_{i}\alpha_{i}(h)}(1+\left\Vert h\right\Vert
)^{d}$ for $h\in\mathfrak{a}$.
\end{lemma}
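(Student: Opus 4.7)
The plan is to exploit the left $(N,\chi)$-equivariance $f(ng)=\chi(n)f(g)$ in order to trade iterated left-invariant derivatives in $\mathfrak{n}$ for right-invariant derivatives, picking up a factor $e^{-\alpha(h)}$ at each step from the conjugation relation $\mathrm{Ad}(\exp(-h))X=e^{-\alpha(h)}X$ for $X\in\mathfrak{n}_{\alpha}$. I will assume $\chi$ is generic (the standing hypothesis in the body of the paper; for the trivial character the conclusion actually fails, because polynomial Schwartz decay does not imply exponential decay). Under genericity, pick for each $\alpha_i\in\Delta$ a vector $X_i\in\mathfrak{n}_{\alpha_i}$ with $d\chi(X_i)\neq 0$; the equivariance gives $L_{X_i}f=-d\chi(X_i)f$, and iterating this scalar identity yields
\[
L_{X_1}^{m_1}\cdots L_{X_l}^{m_l}f=\prod_{i=1}^{l}(-d\chi(X_i))^{m_i}\,f.
\]

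Next I compute the same iterated derivative at $g=\exp h\cdot k$ directly from the definition. Combining $\exp(-tX_i)\exp h=\exp h\cdot\exp(-t e^{-\alpha_i(h)}X_i)$ with $\exp(-sY)k=k\exp(-s\,\mathrm{Ad}(k^{-1})Y)$ and the commutativity of left- and right-invariant derivatives, an inductive computation yields
\[
L_{X_1}^{m_1}\cdots L_{X_l}^{m_l}f(\exp h\cdot k)=(-1)^{m_1+\cdots+m_l}\,e^{-\sum_{i}m_i\alpha_i(h)}\,R_{y(k)}f(\exp h\cdot k),
\]
where $y(k)=(\mathrm{Ad}(k^{-1})X_l)^{m_l}\cdots(\mathrm{Ad}(k^{-1})X_1)^{m_1}\in U(\mathfrak{g})$. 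Equating the two expressions and dividing by the non-zero scalar gives
\[
f(\exp h\cdot k)=\Big(\prod_{i}d\chi(X_i)^{-m_i}\Big)\,e^{-\sum_{i}m_i\alpha_i(h)}\,R_{y(k)}f(\exp h\cdot k).
\]

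To conclude, expand $y(k)=\sum_{\beta}c_{\beta}(k)Z_{\beta}$ in a fixed Poincar\'e--Birkhoff--Witt basis $\{Z_\beta\}$ of $U(\mathfrak{g})$; the coefficients $c_\beta(k)$ are continuous functions of $k\in K$ and hence uniformly bounded. Applying the defining Whittaker Schwartz seminorm $q_{Z_\beta,d}$ to each $R_{Z_\beta}f$, and using $a_o(\exp h\cdot k)=\exp h$, yields
\[
|R_{Z_\beta}f(\exp h\cdot k)|\leq q_{Z_\beta,d}(f)\,e^{\rho(h)}(1+\|h\|)^{-d}.
\]
Summing over $\beta$ and absorbing the constants produces the desired estimate --- in fact with the stronger factor $(1+\|h\|)^{-d}$ rather than the stated $(1+\|h\|)^{d}$ --- and the resulting $q_{m,d}(f)$ is manifestly a continuous seminorm on $\mathcal{C}(N\backslash G;\chi)$. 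The one delicate point is the iterated identity in the second paragraph: because the $X_i$ lie in distinct root spaces and do not commute, the ordering of the $R$-factors in $y(k)$ must be tracked carefully, but commutators $[X_i,X_j]\in[\mathfrak{n},\mathfrak{n}]\subset\ker d\chi$ contribute nothing on the left, and the $k$-dependence is absorbed into a PBW expansion controlled by the compactness of $K$.
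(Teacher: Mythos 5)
Your proposal is correct and follows essentially the same route as the paper's proof: use the $(N,\chi)$-equivariance to trade left derivatives by root vectors $X_i\in\mathfrak{n}_{\alpha_i}$ with $d\chi(X_i)\neq 0$ for right-invariant derivatives at $\exp h\,k$, picking up $e^{-\alpha_i(h)}$ from $\mathrm{Ad}(\exp(-h))$ and absorbing the $\mathrm{Ad}(k^{-1})$-dependence by compactness of $K$, then invoke the seminorms $q_{x,d}$ with $a_{o}(\exp h\,k)=\exp h$. Your closed-form iterated identity with a single PBW expansion is just a packaged version of the paper's step-by-step iteration, and the stronger factor $(1+\left\Vert h\right\Vert)^{-d}$ you obtain is exactly what the paper's own argument yields (and what is needed later for the map into $\mathcal{W}(\mathfrak{a})$), the genericity of $\chi$ being implicit there as well.
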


\begin{proof}
Let $F=\{i|m_{i}>0\}$ and let $x_{1},...,x_{n}$ be a basis of $\mathfrak{g}$.
If $X\in\mathfrak{g}$ and if $k\in K$ then we can write $Ad(k)X=\sum
a_{i}(k,X)x_{i}$. Note that there exists $C$ such that
\[
\left\vert a_{i}(k,X)\right\vert \leq C\left\Vert X\right\Vert
\]
for all $k\in K$. Now let $X_{i}$ be an element of the $\alpha_{i}$ root space
in $\mathfrak{n}_{o}$ such that $d\chi(X_{i})=z_{i}\neq0$. Then
\[
f(\exp(h)k)=z_{i}^{-1}L_{X_{i}}f(\exp(h)k)=z_{i}^{-1}\frac{d}{dt}_{|t=0}%
f(\exp(tX_{i})\exp(h)k)=
\]%
\[
z_{i}^{-1}\frac{d}{dt}_{|t=0}f(\exp(h)\exp(tAd(\exp(-h))X_{i})k)=
\]%
\[
z_{i}^{-1}\frac{d}{dt}_{|t=0}f(\exp(h)\exp(te^{-\alpha_{i}(h)}X_{i})k)=
\]%
\[
z_{i}^{-1}\frac{d}{dt}_{|t=0}f(\exp(h)\exp(te^{-\alpha_{i}(h)}Ad(k^{-1}%
)X_{i})k)=
\]%
\[
e^{-\alpha_{i}(h)}z_{i}^{-1}\sum a_{j}(k^{-1},X_{i})R_{X_{j}}f(\exp(h)k).
\]
Iterating this argument yields an expression
\[
f(\exp(h)k)=e^{-\sum_{i\in F}m_{i}\alpha_{i}(h)}Z(k)f(ak)
\]
with $Z$ a smooth function from $K$ to $L=U^{\sum_{i\in F}m_{i}}%
(\mathfrak{g})$ with $U^{j}(\mathfrak{g})$ the standard filtration. If we
choose a basis of $L$, $y_{1},...,y_{r}$ then we have
\[
Z(k)=\sum b_{i}(k)y_{i}%
\]
with $b_{i}$ continuous functions on $K.$ Let $C_{j}=\max_{k\in K}\left\vert
b_{j}(k)\right\vert $. We have
\[
\left\vert f(\exp(h)k)\right\vert \leq e^{-\sum_{i\in F}m_{i}\alpha_{i}%
(h)}\sum_{j}C_{j}\left\vert y_{j}f(\exp(h)k)\right\vert \leq
\]%
\[
e^{-\sum_{i\in F}m_{i}\alpha_{i}(h)}(1+\left\Vert h\right\Vert )^{-d}%
e^{\rho_{o}(h)}\sum_{j}C_{j}q_{d,y_{j}}(f).
\]

\end{proof}

\begin{lemma}
\label{extension}Let $\psi\in C^{\infty}(G)$ be expressed in the form%
\[
\psi(nak)=\sum_{i=1}^{r}\sum_{j=1}^{s}\phi_{i}(n)f_{ij}(a)\gamma_{j}(k)
\]
for $n\in N,a\in A,k\in K$ with $r,s<\infty$, $\phi_{i}\in C_{c}^{\infty
}(N),\gamma_{j}\in C^{\infty}(K)$ and $f_{ij}\in C^{\infty}(A)$ such that if
$m=(m_{1},...,m_{l})$ with $m_{i}\in\mathbb{Z}_{\geq0},d\in\mathbb{Z}_{\geq0}$
and $x\in U(\mathfrak{a})$ then there exists $C_{ij,m,d,x}$ such that
\[
\left\vert xf_{ij}(a)\right\vert \leq C_{ij,m,d,x}a^{\rho}a^{-c_{1}\alpha
_{1}-...-c_{l}\alpha_{l}}(1+\left\Vert \log a\right\Vert )^{-d}.
\]
Then for $d$ $\in\mathbb{Z}_{\geq0}$ there exists $B_{d}$ such that
\[
\left\vert \psi(g)\right\vert \leq B_{d}\left\vert g\right\vert ^{-\frac{1}%
{2}}(1+\log\left\Vert g\right\Vert )^{-d}.
\]
Also, if $x,y\in U(\mathfrak{g)}$ then $L_{x}R_{y}\psi$ is of the same form.
\end{lemma}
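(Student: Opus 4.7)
The plan has two parts. For the pointwise estimate, I would start from the Iwasawa decomposition $g=nak$ and the product formula $\psi(nak)=\sum_{i,j}\phi_i(n)f_{ij}(a)\gamma_j(k)$. Since each $\phi_i\in C_c^\infty(N)$, there is a fixed compact set $\Omega\subset N$ outside of which $\psi$ vanishes; for $k\in K$ each $\gamma_j$ is bounded, so
\[
|\psi(g)|\le C\max_{i,j}|f_{ij}(a(g))|\cdot \mathbf{1}_{n(g)\in\Omega}.
\]
The submultiplicativity of $|\cdot|$ together with $|k|=1$ for $k\in K\subset O(n)$ (since $\mathrm{Int}(k)$ preserves the $\mathrm{tr}(xy^T)$ form) and the compactness of $\Omega$ yields $|g|\asymp_\Omega|a|$, and similarly $\log\|g\|\asymp\|\log a\|$; hence the polynomial factor $(1+\log\|g\|)^{-d}$ in the conclusion is controlled by the factor $(1+\|\log a\|)^{-d}$ in the hypothesis.

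The heart of the argument is to show $a^\rho a^{-\sum c_i\alpha_i}\le C\,|a|^{-1/2}$ for suitably chosen $c_i$. A direct eigenvalue count on $\wedge^m\mathrm{Int}(a)$ gives $|a|=\prod_{\alpha\in\Phi^+}\max(a^\alpha,a^{-\alpha})^{n_\alpha}$ where $n_\alpha=\dim\mathfrak{n}_\alpha$, so taking logarithms reduces the desired bound to
\[
\sum_i c_i\alpha_i(h)\ \ge\ \sum_{\alpha\in\Phi^+}n_\alpha\alpha(h)^+,\qquad h=\log a,
\]
where $\alpha^+=\max(\alpha,0)$. I would partition $\mathfrak{a}$ into the $2^l$ sign-regions $R_F=\{h:\alpha_i(h)>0\iff i\in F\}$ and, on each $R_F$, choose $c_i=\lceil 2a_i\rceil$ for $i\in F$ and $c_i=0$ otherwise (where $\rho=\sum_ia_i\alpha_i$). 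Using $\alpha^+(h)\le\sum_ic_i^\alpha\alpha_i^+(h)$ for each positive root $\alpha=\sum_ic_i^\alpha\alpha_i$ and summing against the multiplicities $n_\alpha$ gives $\sum_\alpha n_\alpha\alpha^+(h)\le2\sum_ia_i\alpha_i^+(h)$, which on $R_F$ is bounded by the chosen $\sum_ic_i\alpha_i(h)$. Since there are only finitely many regions, the maximum of the resulting constants produces the universal $B_d$.

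For the second assertion, I would argue by induction on the degrees of $x,y$, reducing to $x=X,\ y=Y\in\mathfrak{g}$. Using $\mathfrak{g}=\mathfrak{n}\oplus\mathfrak{a}\oplus\mathfrak{k}$ and a chain rule for the Iwasawa decomposition: the $\mathfrak{k}$-component of $Y$ (resp.\ $X$) is absorbed into derivatives of $\gamma_j$; the $\mathfrak{a}$-component into derivatives of $f_{ij}$, which the hypothesis explicitly allows since it quantifies over all $x\in U(\mathfrak{a})$; the $\mathfrak{n}$-component into derivatives of $\phi_i$, which remain in $C_c^\infty(N)$. The coupling coming from writing $k\exp(tZ)=\exp(t\mathrm{Ad}(k)Z)k$ and splitting $\mathrm{Ad}(k)Z$ along the Iwasawa decomposition introduces smooth, bounded $K$-dependent coefficients, together with exponential factors $a^\alpha$ coming from conjugating $N$ across $A$; since $\alpha\in\Phi^+$ is a nonnegative integer combination of simple roots, multiplying $f_{ij}$ by $a^{\pm\alpha}$ only shifts the free parameters $c_i$ in the decay estimate and therefore preserves the hypothesis. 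Iterating gives $L_xR_y\psi$ as a finite sum of the same form, and the first part then supplies the desired bound.

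The main obstacle is the key exponential inequality of the second paragraph: one must see that the bound $\sum_\alpha n_\alpha\alpha^+(h)\le\sum_ic_i\alpha_i(h)$ fails for any single global choice of $c_i$ (because $\sum c_i\alpha_i$ is linear while the left side is piecewise linear and nonnegative), forcing the sign-region partition and the finite-case-analysis choice of $c_i=c_i(F)$. A secondary, largely notational nuisance is the Iwasawa chain rule needed in the third paragraph, where the dependence of $\mathrm{Ad}(k)Z$ on $k$ must be tracked carefully enough to see that the new coefficients are smooth on $K$ and the new exponential factors are roots, so that the ``same form'' is genuinely preserved.
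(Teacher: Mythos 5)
Your plan is correct and, in outline, runs parallel to the paper's proof: reduce to $r=s=1$, use the compact support of $\phi$ to get $|nak|\asymp|a|$ and $\log\Vert nak\Vert\lesssim 1+\Vert\log a\Vert$, dominate $a^{\rho-\sum c_i\alpha_i}$ by $|a|^{-1/2}$ via a finite case analysis exploiting the freedom in the exponents, and then prove stability under $L_X,R_Y$ by an Iwasawa chain rule, absorbing the $a^{\beta}$ ($\beta\in\Phi^{+}$) factors produced by conjugation across $A$ into the adjustable decay parameters. The one step where you genuinely diverge is the key exponent inequality: the paper gets it from the Weyl group, writing $|a|^{1/2}=a^{s^{-1}\rho}$ for $s$ making $\log a$ dominant and using $s^{-1}s_{o}\rho=\rho-\sum u_i\alpha_i$ with $u_i\in\mathbb{Z}_{\geq0}$, then maximizing over $s\in W(A)$; you instead use the explicit eigenvalue formula $|a|=\prod_{\alpha\in\Phi^{+}}\max(a^{\alpha},a^{-\alpha})^{n_\alpha}$, partition $\mathfrak{a}$ into the $2^{l}$ simple-root sign regions, and verify $\sum_\alpha n_\alpha\alpha(h)^{+}\le 2\sum_i a_i\alpha_i(h)^{+}\le\sum_i\lceil2a_i\rceil\,\alpha_i(h)$ region by region via the identity $2\rho=\sum_\alpha n_\alpha\alpha$. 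The two devices are equivalent (your sign regions are unions of Weyl chambers), but yours avoids quoting the $w\rho$ fact at the cost of the multiplicity bookkeeping, while the paper's is shorter once that fact is granted. One place where your sketch is lighter than the paper and deserves care when written out: for the left derivative, the $\mathfrak{k}$-component of $\mathrm{Ad}(n^{-1})X$ (equivalently the $\bar{\mathfrak{n}}$-directions) lands \emph{between} $n$ and $a$, and conjugating it across $a$ takes it out of $\mathfrak{k}$; the paper resolves this by writing $\theta X_i=Z_i-X_i$ with $Z_i\in Lie(K)$ and conjugating the $X_i$ part back, which is what produces the $a^{2\beta_i}$ coefficients. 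Your blanket principle that ``$a^{\pm\alpha}$ only shifts the free parameters'' does cover the outcome, but this two-step commutation is needed to see that $L_X\psi$ is literally again a finite sum of products $\phi_i(n)f_{ij}(a)\gamma_j(k)$ of the required type.
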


\begin{proof}
To prove the inequality we may assume that $r,s=1$ so
\[
\psi(nak)=\phi(n)f(a)\gamma(k)
\]
Let $\omega$ be the the supports of $\phi$. Let $c_{1}\geq1$ be such that
\[
\max\{\left\Vert n\right\Vert ,\left\Vert n^{-1}\right\Vert \}\leq c_{1}%
,\min\{\left\Vert n\right\Vert ,\left\Vert n^{-1}\right\Vert \}\geq c_{1}%
^{-1},n\in\omega.
\]
We note that $\left\vert n\right\vert ^{\frac{1}{2}}=\left\Vert n\right\Vert $
and $|k|=\left\Vert k\right\Vert =1.$ We have for $n\in N,a\in A.k\in K$
\[
\left\vert nak\right\vert =\left\vert na\right\vert \leq c_{1}\left\vert
a\right\vert
\]
and%
\[
\left\vert a\right\vert =\left\vert n^{-1}nak\right\vert \leq c_{1}\left\vert
nak\right\vert .
\]
By the same argument we have the same inequalities for $\left\Vert
...\right\Vert $. If $h\in\mathfrak{a}$ let $s\in W(A)$ be such that
$\alpha(sh)\geq0,\alpha\in\Phi^{+}$. Then $\left\vert a\right\vert ^{\frac
{1}{2}}=\exp(sh)^{\rho}=a^{s^{-1}\rho}$ so if $s_{o}$ is the element of $W(A)$
such that $s_{o}\Phi^{+}=-\Phi^{+}$ then
\[
\left\vert nak\right\vert ^{-\frac{1}{2}}\leq c_{1}\left\vert a\right\vert
^{-\frac{1}{2}}=c_{1}a^{s^{-1}s_{o}\rho}.
\]
Also note that $s^{-1}s_{o}\rho=\rho-\sum_{i=1}^{l}u_{i}\alpha_{i}$ with
$u_{i}\in\mathbb{Z}_{\geq0}$. We also leave it to the reader to check that
there exists $c_{2}>0$ such that $\left\Vert a\right\Vert \leq e^{c_{2}%
\left\Vert \log a\right\Vert }$ for $a\in A$. Thus $(1+\log\left\Vert
a\right\Vert )\geq c_{3}(1+\left\Vert \log a\right\Vert ).$ With these
observations in place we have%
\[
\left\vert \psi(nak)\right\vert \leq\left(  \sup_{n\in\omega,k\in K}\left\vert
\phi(n)\gamma(k)\right\vert \right)  \left\vert f(a)\right\vert =c_{4}f(a)\leq
c_{4}C_{1,1,u,d}a^{\rho-\sum u_{i}\alpha_{i}}(1+\left\Vert \log a\right\Vert
)^{-d}
\]%
\[
=c_{4}C_{1,1,u,d}\left\vert a\right\vert ^{-\frac{1}{2}}c_{3}^{-d}%
(1+\log\left\Vert a\right\Vert )^{d}.
\]
Thus if we take the maxima of the $C_{1,1,u,d}$ for the $s\in W(A)$ and
incorporate the constants that appear in the estimates at the beginning of the
proof we have%
\[
\psi(g)|\leq C_{d}\left\vert g\right\vert ^{-\frac{1}{2}}(1+\log\left\Vert
g\right\Vert )^{-d}
\]
as asserted.

To complete the proof of the lemma we now we consider the derivatives. It is
enough to show that $R_{X}\psi$ and $L_{X}\psi$ are of the same form for
$X\in\mathfrak{g}$ \ We start with $R_{X}$. Again it is enough to show that if
$s,t=1$ then $R_{X}\psi$ is of the form indicated in the statement of the
lemma. Let $X_{1},...,X_{n}$ be a basis of $\mathfrak{g}$ such that
$X_{1},...,X_{r}\in\mathfrak{n}$ with $[h,X_{i}]=\beta_{i}(h)X_{i}$
$h\in\mathfrak{a}$, $X_{r+1},...,X_{r+l}\in\mathfrak{a}$, $X_{r+l+1}%
,...,X_{n}\in Lie(K)$. Then
\[
Ad(k)X=\sum c_{i}(k,X)X_{i}.
\]
We have%
\[
R_{X}\psi(nak)=\frac{d}{dt_{|t=0}}\psi(nak\exp tX)=\frac{d}{dt_{|t=0}}%
\psi(na\exp tAd(k)Xk)
\]%
\[
=\sum_{i=1}^{n}c_{i}(k,X)\frac{d}{dt_{|t=0}}\psi(na\exp tX_{i}k)=\sum
_{i=1}^{r}c_{i}(k,X)a^{\beta_{i}}\left(  R_{X_{i}}\phi(n)\right)
f(a)\gamma(k)
\]%
\[
+\sum_{i=r+1}^{r+l}c_{i}(k,X)\phi(n)\left(  R_{X_{i}}f(a)\right)
\gamma(k)=+\sum_{i=r+1}^{r+l}c_{i}(k,X)\phi(n)f(a)\left(  L_{X_{i}}%
\gamma(k)\right)
\]
which is easily seen to be of the right form.

To handle the left derivative we consider a different basis $Y_{i}=X_{i}$,
$i=1,...,r+l,Y_{r+l+1},...,Y_{r+l+m}$ a basis of $Lie(M)$ and $Y_{r+l+m+i}%
=\theta X_{i},i=1,...,r$. Then%
\[
Ad(n^{-1})X=\sum d_{i}(n,X)Y_{i}
\]
so%
\[
L_{X}\psi(nak)=-\sum_{i=1}^{n}d_{i}(n,X)\frac{d}{dt_{|t=0}}\psi(n\exp
tY_{i}ak)=-\sum_{i=1}^{r}d_{i}(n,X)\left(  R_{i_{i}}\phi(n)\right)
f(a)\gamma(k)
\]%
\[
+\sum_{i=r+1}^{r+l}d_{i}(n,X)\phi(n)L_{Y_{i}}f(a)\gamma(k)+\sum_{i=r+l=1}%
^{r+l+m}d_{i}(n,X)\phi(n)f(a)L_{Y_{i}}\gamma(k)+
\]%
\[
-\sum_{i=r+l+m+1}^{n}d_{i}(n,X)\frac{d}{dt_{|t=0}}\psi(n\exp tY_{i}ak).
\]
All but the last term are of the right form so we will show that it is also.
Set $\mu=r+l+m$ then
\[
\exp tY_{\mu+i}a=a\exp(tAd(a)^{-1}Y_{\mu+i})=a\exp(ta^{\beta_{i}}Y_{\mu+i}).
\]
So we are \ looking at
\[
-\sum_{i=1}^{r}d_{\mu+i}(n,X)a^{\beta_{i}}\frac{d}{dt_{|t=0}}\psi(na\exp
tY_{\mu+i}k)
\]
Now $Y_{\mu+i}+X_{i}=Z_{i}\in Lie(K).$ Thus $Y_{m+i}=Z_{i}-X_{i}$. So%
\[
-\sum_{i=1}^{r}d_{\mu+i}(n,X)a^{\beta_{i}}\frac{d}{dt_{|t=0}}\psi(na\exp
tY_{\mu+i}k)=\sum_{i=1}^{r}d_{\mu+i}(n,X)a^{\beta_{i}}\frac{d}{dt_{|t=0}}%
\psi(na\exp tX_{i}k)+
\]%
\[
-\sum_{i=1}^{r}d_{i}(n,X)a^{\beta_{i}}\frac{d}{dt_{|t=0}}\psi(na\exp
tZ_{i}k)=\sum_{i=1}^{r}d_{\mu+i}(n,X)a^{2\beta_{i}}R_{X_{i}}\phi
(n)f(a)\gamma(k)
\]%
\[
-\sum_{i=1}^{r}d_{\mu+i}(n,X)\phi(n)f(a)L_{Z_{i}}\gamma(k).
\]
The result is, finally, proved.
\end{proof}

\begin{corollary}
\label{ExtensionProp}If $f\in\mathcal{C}(N\backslash G;\chi)$ is right $K$
finite then and if $\phi\in C_{c}^{\infty}(N)$ then the function on $G$
defined by
\[
\psi(nak)=\phi(n)f(a)
\]
is in $\mathcal{C}(G)$.
\end{corollary}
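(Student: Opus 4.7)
The plan is to reduce Corollary \ref{ExtensionProp} to Lemma \ref{extension}; the work lies entirely in verifying the exponential/polynomial decay estimates that Lemma \ref{extension} takes as hypothesis.

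First, using right $K$-finiteness of $f$, choose a basis $\gamma_{1},\dots,\gamma_{s}\in C^{\infty}(K)$ of the span of the functions $k\mapsto f(\cdot\,k)$ and write
\[
f(ak)=\sum_{j=1}^{s}f_{j}(a)\gamma_{j}(k),\qquad a\in A,\ k\in K,
\]
with $f_{j}\in C^{\infty}(A)$. Pick $k_{1},\dots,k_{s}\in K$ so that $\bigl(\gamma_{j}(k_{i})\bigr)_{ij}$ is invertible, with inverse $(M_{ji})$; then $f_{j}(a)=\sum_{i}M_{ji}f(ak_{i})$. Thus $\psi(nak)=\phi(n)\sum_{j}f_{j}(a)\gamma_{j}(k)$ is already of the shape $\sum\phi_{i}(n)f_{ij}(a)\gamma_{j}(k)$ appearing in Lemma \ref{extension}, with $r=1$.

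Second, verify the estimates on $xf_{j}$ required by Lemma \ref{extension} for each $x\in U(\mathfrak{a})$. The key identity is that differentiation in $\mathfrak{a}$ pulls back to right differentiation on $G$: for $h\in\mathfrak{a}$,
\[
\partial_{h}f_{j}(a)=\sum_{i}M_{ji}\,R_{\operatorname{Ad}(k_{i}^{-1})h}\,f(ak_{i}),
\]
obtained from $f(a\exp(th)k_{i})=f(ak_{i}\exp(t\operatorname{Ad}(k_{i}^{-1})h))$. Since the semi-norms defining $\mathcal{C}(N\backslash G;\chi)$ are built from arbitrary $R_{x}$-derivatives and $R_{y}f$ still transforms by $\chi$ on the left, $\mathcal{C}(N\backslash G;\chi)$ is stable under $R_{y}$ for every $y\in U(\mathfrak{g})$. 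Iterating the identity above expresses any $xf_{j}$ with $x\in U(\mathfrak{a})$ as a finite sum $\sum_{\ell}c_{\ell}(R_{z_{\ell}}f)(ak_{i_{\ell}})$ with $z_{\ell}\in U(\mathfrak{g})$. Applying Lemma \ref{basic} to each $R_{z_{\ell}}f\in\mathcal{C}(N\backslash G;\chi)$ then yields, for every $m=(m_{1},\dots,m_{l})$ and $d\in\mathbb{Z}_{\ge 0}$,
\[
|(xf_{j})(a)|\le C_{j,x,m,d}\,a^{\rho}\,a^{-\sum m_{i}\alpha_{i}}\,(1+\|\log a\|)^{-d},
\]
which is exactly the hypothesis of Lemma \ref{extension}.

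Third, apply Lemma \ref{extension} to $\psi$. It gives $|\psi(g)|\le B_{d}\,|g|^{-1/2}(1+\log\|g\|)^{-d}$ for all $d\ge 0$, and its final sentence guarantees that $L_{x}R_{y}\psi$ is of the same form for all $x,y\in U(\mathfrak{g})$, so the analogous bound holds for every left/right derivative of $\psi$. This is precisely $p_{x,y,d}(\psi)<\infty$ for all $x,y,d$, i.e.\ $\psi\in\mathcal{C}(G)$.

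The main obstacle is the second step: converting the intrinsic $A$-derivatives of the restriction $f_{j}$ into quantities controlled by Lemma \ref{basic}. Once one recognizes that the $\operatorname{Ad}(k_{i}^{-1})$-trick transforms those $A$-derivatives into right $U(\mathfrak{g})$-derivatives of $f$, and that $\mathcal{C}(N\backslash G;\chi)$ is preserved by such right derivations, the remainder is bookkeeping.
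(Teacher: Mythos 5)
Your proof is correct and follows essentially the same route as the paper: use right $K$-finiteness to write $f(ak)=\sum_j f_j(a)\gamma_j(k)$, check the decay hypotheses of Lemma \ref{extension} on the coefficient functions by means of Lemma \ref{basic}, and conclude from Lemma \ref{extension} together with the definition of $\mathcal{C}(G)$. The only cosmetic difference is that the paper gets the coefficient estimates at once by observing that the basis functions (being combinations of right $K$-translates of $f$) lie in $\mathcal{C}(N\backslash G;\chi)$, whereas you recover them through point evaluation at the $k_i$ and the $\mathrm{Ad}(k_i^{-1})$ manipulation; both amount to the same application of Lemma \ref{basic}.
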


\begin{proof}
Let $V=\mathrm{span}_{\mathbb{C}}\{R_{k}f|k\in K\}$. Then $\dim V<\infty.$ Let
$v_{1},...,v_{m}$ be a basis if $V$ then $R_{k}f=\sum\gamma_{i}(k)v_{i}$.
Thus
\[
f(ak)=\sum v_{i}(a)\gamma_{i}(k).
\]
Since, $v_{i}\in\mathcal{C}(N\backslash G;\chi)$ we see that $R_{x}v_{i|A}$
satisfies the inequalities for all $x\in U(\mathfrak{a})$. The result is now a
direct consequence of the definition of $\mathcal{C}(G)$ and Lemma
\ref{extension} .
\end{proof}

\begin{theorem}
If $\psi\in\mathcal{C}(G)$ then $\psi_{\chi}(g)=\int_{N}\chi(n)^{-1}%
\psi(ng)dn$ defines an element of $\mathcal{C}(N\backslash G;\chi)$.
\end{theorem}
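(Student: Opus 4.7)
The plan is to verify in turn: (a) absolute convergence of the defining integral, (b) the $\chi$-covariance $\psi_\chi(n_0 g)=\chi(n_0)\psi_\chi(g)$, (c) smoothness of $\psi_\chi$, and (d) finiteness of every seminorm $q_{x,d}(\psi_\chi)$ with control by a continuous seminorm of $\psi$. Item (b) is immediate from the change of variable $n\mapsto nn_0^{-1}$ together with unimodularity of $N$, once (a) is known. Since $R_x$ acts only on the right-$g$ variable, one has formally $R_x\psi_\chi=(R_x\psi)_\chi$; because $\mathcal{C}(G)$ is stable under $R_x$, a uniform seminorm bound on $|\psi_\chi(g)|$ applied to all right derivatives of $\psi$ yields both (c) and (d) by dominated convergence and differentiation under the integral. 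Thus everything reduces to proving
\[
|\psi_\chi(g)|\leq p_d(\psi)\, a_o(g)^{\rho}\bigl(1+\|\log a_o(g)\|\bigr)^{-d}
\]
for every $d\geq 0$ and some continuous seminorm $p_d$ on $\mathcal{C}(G)$. By (b) this reduces further to estimating $|\psi_\chi(ak)|\leq\int_N|\psi(nak)|\,dn$ for $a\in A$, $k\in K$.

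To extract the factor $a^\rho$ I would first perform the substitution $n\mapsto ana^{-1}$, whose Jacobian is $a^{2\rho}$, to obtain
\[
\int_N|\psi(nak)|\,dn = a^{2\rho}\int_N|\psi(ank)|\,dn.
\]
The Schwartz bound $|\psi(g)|\leq C_{d'}|g|^{-1/2}(1+\log\|g\|)^{-d'}$ combined with the lower estimate $|ank|^{1/2}=\|\wedge^m\mathrm{Ad}(an)\|_{op}\geq a^{2\rho}$ (evaluating at a unit vector in $\wedge^m\mathfrak{n}$) gives $|ank|^{-1/2}\leq a^{-2\rho}$ when $a$ is dominant, so after the Jacobian we recover the target factor $a^\rho$ at least in the dominant chamber, leaving
\[
\int_N|\psi(ank)|\,dn \leq C_{d'} a^{-2\rho}\int_N(1+\log\|ank\|)^{-d'}\,dn.
\]
The remaining integral is the genuinely hard part: a naive bound in $n$ diverges, and one must exploit finer decay. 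Here is where Beuzart-Plessis (Theorem \ref{Raphael}) enters: decomposing $N=\ker(\chi)\cdot\exp(\mathbb{R}x_o)$ with $x_o\perp\ker d\chi$ and using Lemma \ref{simple-ineq} to replace $|ank|^{-1/2}$ by $a(ank)^{-\rho}$ type quantities, the transverse integral $\int_{\ker\chi}a(n)^{-(1-\varepsilon)\rho}dn<\infty$ supplied by Theorem \ref{Raphael} gives convergence, while the $\exp(\mathbb{R}x_o)$-integration is controlled by the Schwartz decay in $\|g\|$.

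To upgrade the bound $|\psi_\chi(ak)|\leq C a^\rho$ to $C a^\rho(1+\|\log a\|)^{-d}$ I would invoke the shift trick already used in Lemma \ref{basic} in the forward direction. Namely, for each simple $\alpha_i$, pick $X_i\in\mathfrak{n}_{\alpha_i}$ with $d\chi(X_i)=z_i\neq 0$; then $L_{X_i}\psi_\chi=-z_i\psi_\chi$ gives, after conjugating by $\exp h$ and expanding $\mathrm{Ad}(k^{-1})X_i$ in a basis of $\mathfrak{g}$, an identity
\[
\psi_\chi(\exp h\cdot k)=e^{-\alpha_i(h)}\sum_j c_{ij}(k)\,R_{x_j}\psi_\chi(\exp h\cdot k),
\]
and iterating yields $\psi_\chi(\exp h\cdot k)=e^{-\sum m_i\alpha_i(h)}Z(k)R_y\psi_\chi(\exp h\cdot k)$ for any chosen $m_i\in\mathbb{Z}_{\geq 0}$ and corresponding $y\in U(\mathfrak{g})$. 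Applying the crude bound $|R_y\psi_\chi|\leq C e^{\rho(h)}$ to the right-hand side converts the extra factor $e^{-\sum m_i\alpha_i(h)}$ into decay in every simple-root direction, and standard arguments convert this multi-directional exponential decay into the polynomial decay $(1+\|\log a\|)^{-d}$ on $A$ (polynomial bounds on $\|h\|$ in terms of $\max_\alpha e^{-\alpha(h)}$ once $h$ is outside a compact set).

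The main obstacle is the estimate $\int_N|\psi(nak)|\,dn\leq C a^\rho$: the Schwartz decay of $\psi$ alone does not suffice, and one genuinely needs Beuzart-Plessis's integrability result on $\ker(\chi)$ to tame the $N$-integral. Everything else --- the covariance, the bootstrap to polynomial $\log$-decay, and the interchange of differentiation with the integral --- is then routine from the machinery already developed in the paper.
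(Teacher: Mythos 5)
There are two genuine gaps, and the key input is misidentified. First, your assertion that ``the Schwartz decay of $\psi$ alone does not suffice, and one genuinely needs Beuzart-Plessis's integrability result on $\ker(\chi)$'' is the opposite of how the proof goes: since only $|\psi|$ enters, the estimate is completely insensitive to $\chi$, and the paper's proof uses just the Schwartz bound $|\psi(na)|\leq C_d|na|^{-1/2}(1+\log\|na\|)^{-d}$, the inequality $|g|^{-1/2}\leq\Xi(g)$, the elementary bounds $\|na\|\geq\|a\|$ and $\|n\|\leq\|an\|^2$, and the classical Harish-Chandra estimate (from the proof of Theorem 7.2.1 of \cite{RRGI}) that $a^{-\rho}\int_N\Xi(na)(1+\log\|na\|)^{-d}dn\leq B$ for $d$ large. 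Theorem \ref{Raphael} is needed elsewhere (in Proposition \ref{less-simple-ineq}, where $\Xi$ is integrated over $\ker\chi$ with no polynomial factor available), not here. Your substitute route is not carried out and, as sketched, does not work: the exponent bookkeeping is off ($|ank|=|an|=\|\wedge^m Int(an)\|_{op}\geq a^{2\rho}$, so $|ank|^{-1/2}\leq a^{-\rho}$, not $a^{-2\rho}$; with your displayed inequalities the factor $a^{\rho}$ does not appear at all), and the claim that ``the $\exp(\mathbb{R}x_o)$-integration is controlled by the Schwartz decay in $\|g\|$'' is false as stated, because $\log\|\exp(tx_o)\|$ grows only like $\log(1+|t|)$, so the factor $(1+\log\|g\|)^{-d}$ yields non-integrable logarithmic decay in $t$. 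The $t$-decay must come from the $|g|^{-1/2}$ (i.e.\ $\Xi$) part, and splitting that single factor so as to produce simultaneously integrability in $t$, a bound of the form $a(n_1)^{-(1-\varepsilon)\rho}$ on $\ker\chi$, and the output factor $a^{\rho}$ is precisely the hard estimate you have not supplied.

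Second, the upgrade to the required polynomial decay cannot be obtained by the shift trick. The identity $L_{X_i}\psi_\chi=-z_i\psi_\chi$ produces, as in Lemma \ref{basic}, an extra factor $e^{-\sum m_i\alpha_i(h)}$, which improves the bound only where the simple roots $\alpha_i(h)$ are large and positive; on the antidominant cone $\{h\in\mathfrak{a}:\alpha_i(h)\leq0\ \forall i\}$ these factors are $\geq1$ and give nothing, whereas the seminorms $q_{x,d}$ require $|\psi_\chi(\exp h\,k)|\lesssim e^{\rho(h)}(1+\|h\|)^{-d}$ in \emph{every} direction of $\mathfrak{a}$. In the paper this decay is harvested before the $N$-integration directly from the Schwartz factor, via $\|na\|\geq\|a\|$ and $(1+\log\|na\|)^{-d-r}\leq(1+\log\|na\|)^{-d}(1+\log\|a\|)^{-r}$, together with the comparability of $\log\|a\|$ and $\|\log a\|$; some step of this kind is indispensable and is absent from your plan. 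The soft parts of your proposal (the $\chi$-covariance, $R_x\psi_\chi=(R_x\psi)_\chi$, and differentiation under the integral once a uniform bound is in hand) are fine and agree with the paper.
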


\begin{proof}
The Harish-Chandra spherical function $\Xi(g)=\left\langle \pi_{0}%
(g)1,1\right\rangle $ satisfies
\[
\Xi(g)\geq\left\vert g\right\vert ^{-\frac{1}{2}}.
\]
Thus since%
\[
\left\vert \psi(na)\right\vert \leq C_{d}\left\vert na\right\vert
^{-1/2}(1+\log\left\Vert na\right\Vert )^{-d}%
\]
for all $d\geq0$ we have
\[
\left\vert \psi_{\chi}(ak)\right\vert \leq C_{d}\int_{N}\left\vert
na\right\vert ^{-\frac{1}{2}}(1+\log\left\Vert an\right\Vert )^{-d}dn\leq
C_{d}\int_{N}\Xi(na)(1+\log\left\Vert na\right\Vert )^{-d}dn.
\]
Also $\left\Vert na\right\Vert \geq\left\Vert a\right\Vert $ and
\[
\left\Vert n\right\Vert =\left\Vert naa^{-1}\right\Vert \leq\left\Vert
a^{-1}\right\Vert \left\Vert an\right\Vert =\left\Vert a\right\Vert \left\Vert
an\right\Vert \leq\left\Vert an\right\Vert ^{2}.
\]
Thus
\[
\left\vert \psi_{\chi}(ak)\right\vert \leq C_{d+r}\int_{N}\Xi(na)(1+\log
\left\Vert na\right\Vert )^{-d}dn(1+\log\left\Vert a\right\Vert )^{-r}.
\]
Now in the proof of Theorem 7.2.1 in \cite{RRGI} we have seen that there
exists $d$ such that%
\[
a^{-\rho}\int_{N}\Xi(na)(1+\log\left\Vert na\right\Vert )^{-d}dn\leq B<\infty
\]
Since $\left(  R_{x}\psi\right)  _{\chi}=R_{x}(\psi_{\chi})$ the theorem now
follow from the definition of $\mathcal{C}(N\backslash G;\chi)$.
\end{proof}

If $f\in C^{\infty}(\mathfrak{a})$ define for $m=(m_{1},...,m_{l}),d,$
$m_{i},d\in\mathbb{Z}_{\geq0}$ and $x$ a constant coefficient differential
operator on $\mathfrak{a}$
\[
w_{m,d}(f)=\sup_{h\in\mathfrak{a}}e^{-\rho(h)}e^{\sum m_{i}\alpha_{i}%
(h)}(1+\left\Vert h\right\Vert )^{d}\left\vert xf(h)\right\vert .
\]
Then we set $\mathcal{W(}\mathfrak{a})$ equal to the space of all $f\in
C^{\infty}(\mathfrak{a})$ such that $w_{c,d}(f)<\infty$ for all $m=(m_{1}%
,...,m_{l}),d,$ $m_{i},d\in\mathbb{Z}_{\geq0}$ endowed with the topology
determined by these semi-norms

\begin{theorem}
Assume that $\chi$ is generic. If $\psi\in C^{\infty}(N\backslash G/K;\chi)$
set $T(\psi)(h)=\psi(\exp h)$ for $h\in\mathfrak{a}$. Then $T(\mathcal{C}%
(N\backslash G/K;\chi))\subset\mathcal{W(}\mathfrak{a})$ and $T$ defines a
continuous isomorphism of $\mathcal{C}(N\backslash G;\chi)$ onto
$\mathcal{W(}\mathfrak{a})$.
\end{theorem}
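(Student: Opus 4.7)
The plan has four parts: $T$ maps $\mathcal{C}(N\backslash G/K;\chi)$ continuously into $\mathcal{W}(\mathfrak{a})$, $T$ is injective, $T$ is surjective, and $T^{-1}$ is continuous. For continuity of the forward map, observe that for $x\in U(\mathfrak{a})$ (regarded as a constant-coefficient differential operator on $\mathfrak{a}$), the chain rule under $\exp$ gives $xT(\psi)(h)=R_{x}\psi(\exp h)$. Since $\psi\mapsto R_{x}\psi$ is continuous on $\mathcal{C}(N\backslash G;\chi)$, applying Lemma \ref{basic} to $R_{x}\psi$ bounds $w_{m,d,x}(T(\psi))$ by $q_{m,d}(R_{x}\psi)$, which is a continuous semi-norm of $\psi$. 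Injectivity is immediate: the equivariances $\psi(nak)=\chi(n)\psi(a)$ force $\psi|_{A}=T(\psi)\circ\log$ to determine $\psi$.

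For surjectivity, given $f\in\mathcal{W}(\mathfrak{a})$ define $\psi(nak)=\chi(n)f(\log a)$. Smoothness of $\psi$ follows from the smoothness of the Iwasawa projections $G\to N$, $A$, $K$; the required equivariances and the identity $T(\psi)=f$ are built in. The substantive task is to show $\psi\in\mathcal{C}(N\backslash G;\chi)$ with seminorms continuously bounded by those of $f$. For $X\in\mathfrak{g}$, write $Ad(k)X=Y_{n}+Y_{a}+Y_{k}$ in the vector-space Iwasawa decomposition $\mathfrak{g}=\mathfrak{n}\oplus\mathfrak{a}\oplus\mathfrak{k}$. Using the first-order identity $\exp(tAd(k)X)=\exp(tY_{n})\exp(tY_{a})\exp(tY_{k})+O(t^{2})$ combined with $a\exp(tY_{n})a^{-1}=\exp(tAd(a)Y_{n})$ and the right $K$-invariance of $\psi$, one reads off
\[
R_{X}\psi(nak)=\chi(n)\Bigl[\sum_{\alpha\in\Delta}a^{\alpha}d\chi(Y_{n,\alpha})f(\log a)+(\partial_{Y_{a}}f)(\log a)\Bigr],
\]
where $Y_{n,\alpha}$ is the image of $Y_{n}$ in $(\mathfrak{n}/[\mathfrak{n},\mathfrak{n}])_{\alpha}$ (the $Y_{k}$-term drops by $K$-invariance, and $d\chi$ kills $[\mathfrak{n},\mathfrak{n}]$). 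Iterating via PBW $U(\mathfrak{g})=U(\mathfrak{n})U(\mathfrak{a})U(\mathfrak{k})$, in which positive-degree $U(\mathfrak{k})$-factors are annihilated by right $K$-invariance, shows that $R_{x}\psi$ is a finite sum of terms of the form $\chi(n)P(k,a)(Df)(\log a)$, with $D\in U(\mathfrak{a})$ and $P(k,\cdot)$ a polynomial in $\{a^{\beta}\}_{\beta\in\Phi^{+}}$ whose coefficients are smooth, hence bounded, functions of $k\in K$.

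Each monomial $a^{\beta}$ with $\beta=\sum_{j}c_{j}\alpha_{j}\in\Phi^{+}$ is absorbed by choosing the semi-norm $w_{m,d'}(Df)$ with $m_{j}=c_{j}$: the resulting bound $|Df(h)|\leq w_{m,d'}(Df)e^{\rho(h)}e^{-\sum m_{j}\alpha_{j}(h)}(1+\|h\|)^{-d'}$ together with the identity $a^{\beta}e^{-\sum m_{j}\alpha_{j}(\log a)}=1$ yields
\[
a^{-\rho}(1+\|\log a\|)^{d}\,a^{\beta}|Df(\log a)|\leq w_{m,d'}(Df)(1+\|\log a\|)^{d-d'},
\]
bounded for $d'\geq d$. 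Summing the finitely many contributions gives $q_{x,d}(\psi)\leq C_{x,d}\cdot\max w_{m,d',D}(f)$, simultaneously proving $\psi\in\mathcal{C}(N\backslash G;\chi)$ and the continuity of $T^{-1}$. The main obstacle is the bookkeeping in the iteration: one must verify inductively that the class of expressions $\chi(n)P(k,a)(Df)(\log a)$ is stable under $R_{\mathfrak{g}}$, so that every factor $a^{\beta}$ produced by iterating the first-order formula above is paired with a copy of $Df$ on which the cancellation applies; once this stability is established, the estimates themselves are routine.
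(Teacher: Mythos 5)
Your proof is essentially correct, but its second half follows a genuinely different route from the paper. The forward inclusion, continuity of $T$, and injectivity coincide with the paper's (Lemma \ref{basic} applied to $R_{x}\psi$, uniqueness of $G=NAK$). For surjectivity the paper does \emph{not} build the preimage directly: it extends $f$ to $\psi(nak)=\phi(n)f(a)$ with $\phi\in C_{c}^{\infty}(N)$ normalized by $\int_{N}\chi(n)^{-1}\phi(n)dn=1$, uses Lemma \ref{extension} to place this in the Harish-Chandra space $\mathcal{C}(G)$, applies the theorem that $\psi\mapsto\psi_{\chi}=\int_{N}\chi(n)^{-1}\psi(n\cdot)dn$ maps $\mathcal{C}(G)$ into $\mathcal{C}(N\backslash G;\chi)$ (which rests on $\Xi$-function estimates), and obtains continuity of $T^{-1}$ from the open mapping theorem. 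You instead set $\psi(nak)=\chi(n)f(\log a)$ and estimate all right derivatives; this works, since the class $\chi(n)c(k)a^{\beta}(Df)(\log a)$, with $\beta$ a nonnegative integral combination of elements of $\Delta$, is indeed stable under $R_{X}$: differentiating along $na\exp(tAd(k)X)k$ via the first-order Iwasawa factorization produces only terms of this shape, new factors $a^{\alpha}$ occurring solely for $\alpha\in\Delta$ because $d\chi$ kills $[\mathfrak{n},\mathfrak{n}]$ and hence every non-simple root space, so the $w_{m,d}$ absorption goes through. One caveat: your PBW remark is not literally the mechanism, since the intermediate expressions are no longer right $K$-invariant; either carry out the induction you flag, or use $R_{x}\psi(gk)=R_{Ad(k)x}\psi(g)$ to reduce to points of $NA$, where the annihilation of positive-degree $U(\mathfrak{k})$-factors gives a closed formula. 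What each approach buys: the paper recycles lemmas it needs elsewhere and avoids radial-component computations, at the cost of a non-constructive continuity statement for $T^{-1}$; your route is self-contained and yields explicit bounds $q_{x,d}(T^{-1}f)\leq C\max w_{m,d^{\prime}}(f)$, dispensing with the open mapping theorem, at the cost of the iteration bookkeeping.
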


\begin{proof}
Lemma \ref{basic} implies that $T(\mathcal{C}(N\backslash G/K;\chi
))\subset\mathcal{W(}\mathfrak{a})$. Using the definitions of the semi-norms
defining the topologies of $\mathcal{C}(N\backslash G/K;\chi)$ and
$\mathcal{W(}\mathfrak{a})$ the continuity of $T$ follows. $T$ is injective
since $G=NAK$. Let $\phi\in C_{c}^{\infty}(N)$ be such that%
\[
\int_{N}\chi(n)^{-1}\phi(n)dn=1.
\]
If $f\in\mathcal{W(}\mathfrak{a})$ set $\psi(nak)=\phi(n)f(a).$ Lemma
\ref{extension} implies that $\psi\in\mathcal{C}(G/K)$. Then $\psi_{\chi}%
\in\mathcal{C}(N\backslash G/K;\chi)$ and $T(\psi_{\chi})=f$. Thus the map is
surjective. The open mapping theorem now implies that $T^{-1}$ is continuous.
\end{proof}

\section{Appendix 3. The Whittaker radial component of the Casimir operator}

Let $\chi$ be a unitary character of $N$. Let $\mathfrak{m}$ be the
centralizer of $\mathfrak{a}$ in $\mathfrak{k}$. In this appendix we will
calculate the differential operator on $A$ corresponding to the Casimir
operator of $G$ on $C^{\infty}(N\backslash G/K;\chi).$

Let $X_{\alpha,i},i=1,...,m_{\alpha}$ be a basis of $\mathfrak{n}_{\alpha}$
such that $B(X_{\alpha,i},\theta X_{\alpha,j})=-\delta_{ij}.$ Note, before we
start calculating, that this implies that%
\[
\lbrack X_{\alpha,i},\theta X_{\alpha,j}]=-\delta_{ij}h_{\alpha}+m_{ij}%
\]
with $m_{ij}\in\mathfrak{m}$ and $m_{ij}=0$ if $i=j$ .Let $h_{1},...,h_{l}$ be
an orthonormal basis of $\mathfrak{a}$ relative to $\left\langle
...,...\right\rangle .$ Then the Casimir operator of $G$ relative to $B$ is
\[
C=-\sum_{j=1}^{m_{\alpha}}(X_{\alpha,j}\theta X_{\alpha,j}+\theta X_{\alpha
,j}X_{\alpha,j})+C_{\mathfrak{m}}+\sum_{i=1}^{l}h_{i}^{2}%
\]
Where $C_{\mathfrak{m}}$ is the Casimir operator corresponding to
$B_{|\mathfrak{m}}$. Let $f\in C^{\infty}(N\backslash G/K,\chi)$ we wish to
calculate $R_{C}f(a)$ for $a\in A$. We observe that $X_{\alpha,i}+\theta
X_{\alpha,j}\in Lie(K)$ and%
\[
\left(  X_{\alpha,i}+\theta X_{\alpha,j}\right)  ^{2}=X_{\alpha,i}^{2}+\theta
X_{\alpha,i}^{2}+X_{\alpha,j}\theta X_{\alpha,j}+\theta X_{\alpha,j}%
X_{\alpha,j}.
\]
Thus
\[
R_{X_{\alpha,j}\theta X_{\alpha,j}+\theta X_{\alpha,j}X_{\alpha,j}%
}f=-R_{X_{\alpha,i}^{2}}f-R_{\theta X_{\alpha,i}^{2}}f.
\]
Also,
\[
R_{\theta X_{\alpha,j}}R_{\theta X_{\alpha,j}}f=-R_{\theta X_{\alpha,j}%
}R_{X_{\alpha,j}}f=-R_{h_{\alpha}}f+R_{X_{\alpha,j}}R_{\theta X_{\alpha,j}%
}f=-R_{h_{\alpha}}f-R_{X_{\alpha,j}}^{2}f,
\]%
\[
\left(  R_{X_{\alpha,j}^{2}}f\right)  (a)=a^{2\alpha}\left(  L_{X_{\alpha
,j}^{2}}f\right)  (a)=d\chi(X_{\alpha,j})^{2}a^{2\alpha}f(a)
\]
and
\[
R_{C_{\mathfrak{m}}}f=0.
\]
The upshot is that
\[
Cf(a)=2\sum_{\alpha\in\Phi^{+}}\sum_{j=1}^{m_{\alpha}}d\chi(X_{\alpha,j}%
)^{2}a^{2\alpha}f(a)-\sum_{\alpha\in\Phi^{+}}m_{\alpha}h_{\alpha}%
f(a)+\sum_{i=1}^{l}h_{i}^{2}f(a).
\]
If $\alpha\notin\Delta$ then $d\chi(X_{\alpha,i})=0$ and $\sum_{\alpha\in
\Phi^{+}}m_{\alpha}h_{\alpha}=2h_{\rho}$, so%
\[
Cf(a)=2\sum_{\alpha\in\Delta}\sum_{j=1}^{m_{\alpha}}d\chi(X_{\alpha,j}%
)^{2}a^{2\alpha}f(a)-2h_{\rho.}f(a)+\sum_{i=1}^{l}h_{i}^{2}f(a).
\]
Noting that $d\chi=i\xi_{\chi}$ with $\xi_{\chi}\in\mathfrak{n}^{\ast}$ thus
if we set $\xi_{\chi,\alpha}=\xi_{\chi|\mathfrak{n}_{\alpha}}$
\[
\sum_{j=1}^{m_{\alpha}}d\chi(X_{\alpha,j})^{2}=-\left\Vert \xi_{\chi,\alpha
}\right\Vert ^{2}.
\]
We also have%
\[
a^{\rho}\sum_{i=1}^{l}h_{i}^{2}a^{-\rho}=(\rho,\rho)-2h_{\rho}+\sum_{i=1}%
^{l}h_{i}^{2}.
\]
We have derived

\begin{lemma}
if $f\in C^{\infty}(N\backslash G/K;\chi)$ and $a\in A$ then
\[
\left(  C+(\rho,\rho)\right)  f(a)=a^{\rho}(\sum_{i=1}^{l}h_{i}^{2}%
-2\sum_{\alpha\in\Delta}\left\Vert \xi_{\chi,\alpha}\right\Vert ^{2}%
a^{2\alpha})a^{-\rho}f(a).
\]

\end{lemma}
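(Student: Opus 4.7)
The plan is to compute $R_C f(a)$ for $a\in A$ by expanding the Casimir in a basis adapted to the Iwasawa decomposition and then using the two equivariance properties of $f$: right $K$-invariance, which kills compact directions, and left $(N,\chi)$-equivariance, which converts $\mathfrak{n}$-derivatives into multiplication by exponentials on $A$. Concretely, write
\[
C = -\sum_{\alpha\in\Phi^+}\sum_{j=1}^{m_\alpha}\bigl(X_{\alpha,j}\theta X_{\alpha,j}+\theta X_{\alpha,j}X_{\alpha,j}\bigr) + C_{\mathfrak{m}} + \sum_{i=1}^{l} h_i^2,
\]
justified by the pairing $B(X_{\alpha,i},\theta X_{\alpha,j})=-\delta_{ij}$ and the orthogonal sum $\mathfrak{g}=\bar{\mathfrak{n}}\oplus\mathfrak{m}\oplus\mathfrak{a}\oplus\mathfrak{n}$.

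Next, I would peel off the terms killed by $K$-invariance. Since $X_{\alpha,j}+\theta X_{\alpha,j}\in\mathfrak{k}$ and $C_{\mathfrak{m}}\in U(\mathfrak{k})$, both $R_{(X_{\alpha,j}+\theta X_{\alpha,j})^2}f$ and $R_{C_{\mathfrak{m}}}f$ vanish. Expanding the square and solving for the cross term replaces it (modulo $R_{\mathfrak{k}}$) by $-(X_{\alpha,j}^2+(\theta X_{\alpha,j})^2)$, so the cross terms contribute $R_{X_{\alpha,j}^2}f+R_{(\theta X_{\alpha,j})^2}f$. A second application of $R_{\theta X_{\alpha,j}}f=-R_{X_{\alpha,j}}f$ together with the bracket $[\theta X_{\alpha,j},X_{\alpha,j}]=h_\alpha$ further reduces $R_{(\theta X_{\alpha,j})^2}f$ to $R_{X_{\alpha,j}}^2f-R_{h_\alpha}f$. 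Now the left equivariance of $f$ is used: for $X\in\mathfrak{n}_\alpha$ and $a\in A$ we have $a\exp(tX)=\exp(ta^\alpha X)a$, and the character identity $f(\exp(sX)g)=e^{s d\chi(X)}f(g)$ gives $R_Xf(a)=a^\alpha d\chi(X)f(a)$ and then $R_{X^2}f(a)=a^{2\alpha}d\chi(X)^2 f(a)$. Combining yields
\[
Cf(a) = 2\sum_{\alpha\in\Phi^+}\sum_{j} d\chi(X_{\alpha,j})^2 a^{2\alpha} f(a) - \sum_\alpha m_\alpha R_{h_\alpha}f(a) + \sum_i R_{h_i^2}f(a).
\]
Because $\chi$ factors through $\mathfrak{n}/[\mathfrak{n},\mathfrak{n}]$, the first sum collapses to $\alpha\in\Delta$, and the standard identity $\sum_{\alpha\in\Phi^+}m_\alpha h_\alpha=2h_\rho$ handles the second.

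Finally, I would convert to the conjugated form in the statement. The elementary commutation $[h_i,a^\rho]=\rho(h_i)a^\rho$ yields $a^\rho h_i a^{-\rho}=h_i-\rho(h_i)$, and summing the squares gives
\[
a^\rho \Bigl(\sum_i h_i^2\Bigr) a^{-\rho} = \sum_i h_i^2 - 2h_\rho + (\rho,\rho),
\]
which exactly absorbs the $-2h_\rho$ term and the $(\rho,\rho)$ shift. Since $\chi$ is unitary, $d\chi=i\xi_\chi$ with $\xi_\chi\in\mathfrak{n}^{\ast}$, and orthonormality of $\{X_{\alpha,j}\}$ converts $\sum_j d\chi(X_{\alpha,j})^2$ into $-\|\xi_{\chi,\alpha}\|^2$. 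Putting everything together gives the asserted formula. No step is deep: the only real obstacle is bookkeeping — signs coming from $\theta$, from the order $[X,\theta X]$ versus $[\theta X,X]$, and from the factor $i$ in $d\chi$, together with the factor of $2$ that arises precisely because both $R_{X^2}f$ and $R_{(\theta X)^2}f$ contribute the same exponential term after the bracket reduction.
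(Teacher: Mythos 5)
Your proposal is correct and follows essentially the same route as the paper's Appendix 3 computation: the same Iwasawa-adapted expansion of $C$ with dual bases $X_{\alpha,j},-\theta X_{\alpha,j}$, elimination of $\mathfrak{k}$-directions via $(X_{\alpha,j}+\theta X_{\alpha,j})^2$ and $C_{\mathfrak{m}}$, the bracket reduction $R_{(\theta X_{\alpha,j})^2}f=R_{X_{\alpha,j}}^2f-R_{h_\alpha}f$, the identities $R_{X^2}f(a)=a^{2\alpha}d\chi(X)^2f(a)$, $\sum_{\alpha\in\Phi^+}m_\alpha h_\alpha=2h_\rho$, and the conjugation $a^\rho\bigl(\sum_i h_i^2\bigr)a^{-\rho}=\sum_i h_i^2-2h_\rho+(\rho,\rho)$. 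Your sign bookkeeping is in fact cleaner than the paper's intermediate displays (which contain compensating slips), and your final formula agrees with the lemma.
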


\end{document}